\pdfoutput=1
\RequirePackage{ifpdf}
\ifpdf % We~are running pdfTeX in pdf mode
\documentclass[pdftex]{sigma}
\else
\documentclass{sigma}
\fi

\numberwithin{equation}{section}

\usepackage{enumerate}
\usepackage{empheq}
\newtheorem{Thm}{Theorem}[section]
\newtheorem{Prop}[Thm]{Proposition}
\newtheorem{Cor}[Thm]{Corollary}
\newtheorem{Lem}[Thm]{Lemma}
\newtheorem{Clm}[Thm]{Claim}

\newtheorem*{Ma}{Main Theorem}
\theoremstyle{definition}

\newtheorem{Def}[Thm]{Definition}
\newtheorem{Asu}[Thm]{Assumption}
\newtheorem{Rem}[Thm]{Remark}

\newcommand{\Ric}{\operatorname{Ric}}
\newcommand{\tr}{\operatorname{tr}}

\newcommand{\Id}{\operatorname{Id}}
\newcommand{\Vol}{\operatorname{Vol}}
\newcommand{\diam}{\operatorname{diam}}

\newcommand{\Span}{\operatorname{Span}}

\newcommand{\LIP}{\operatorname{LIP}}
\newcommand{\Lip}{\operatorname{Lip}}
\newcommand{\PCM}{\operatorname{PCM}}
\newcommand{\Sf}{\operatorname{Sf}}
\newcommand{\TestF}{\operatorname{TestF}}
\newcommand{\supp}{\operatorname{supp}}

\newcommand{\Hess}{\operatorname{Hess}}
\newcommand{\Ch}{\operatorname{Ch}}
\newcommand{\sym}{\operatorname{sym}}
\newcommand{\Fl}{\operatorname{Fl}}
\newcommand{\R}{\mathbb{R}}
\newcommand{\Z}{\mathbb{Z}}
\newcommand{\mf}{\mathfrak{m}}
\newcommand{\Div}{\operatorname{div}}
\newcommand{\RCD}{{\rm RCD}^\ast}
\newcommand{\BE}{{\rm BE}}
\newcommand{\BL}{{\rm BL}}

\newcommand{\Ha}{\mathcal{H}}
\newcommand{\nHa}{\underline{\mathcal{H}}}
\newcommand{\D}{\mathcal{D}}
\newcommand{\A}{\mathcal{A}}
\begin{document}
%\allowdisplaybreaks

\newcommand{\arXivNumber}{2007.07491}

\renewcommand{\thefootnote}{}

\renewcommand{\PaperNumber}{017}

\FirstPageHeading

\ShortArticleName{Convergence to the Product of the Standard Spheres and Eigenvalues of the Laplacian}

\ArticleName{Convergence to the Product of the Standard Spheres\\ and Eigenvalues of the Laplacian\footnote{This paper is a~contribution to the Special Issue on Scalar and Ricci Curvature in honor of Misha Gromov on his 75th Birthday. The full collection is available at \href{https://www.emis.de/journals/SIGMA/Gromov.html}{https://www.emis.de/journals/SIGMA/Gromov.html}}}

\Author{Masayuki AINO}

\AuthorNameForHeading{M.~Aino}

\Address{RIKEN, Center for Advanced Intelligence Project AIP,\\ 1-4-1 Nihonbashi, Tokyo 103-0027, Japan}
\Email{\href{mailto:masayuki.aino@riken.jp}{masayuki.aino@riken.jp}}
\URLaddress{\url{https://sites.google.com/site/masayukiaino/}}

\ArticleDates{Received July 17, 2020, in final form February 07, 2021; Published online February 24, 2021}

\Abstract{We show a Gromov--Hausdorff approximation to the product of the standard spheres $S^{n-p}\times S^p$ for Riemannian manifolds with positive Ricci curvature under some pinching condition on the eigenvalues of the Laplacian acting on functions and forms.}

\Keywords{Gromov--Hausdorff distance; Lichnerowicz--Obata estimate; parallel $p$-form}

\Classification{53C20; 58J50}

\section{Introduction}
In this article we show that if an $n$-dimensional closed Riemannian manifold with positive Ricci curvature admits an almost parallel $p$-form ($2\leq p <n/2$) in $L^2$-sense and if the first $n+1$ eigenvalues of the Laplacian acting on functions are close to their optimal values, then the Riemannian manifold is close to the product of the standard spheres $S^{n-p}\times S^p$ with appropriate radii (Main Theorem below).
Before giving the precise statement, we provide some backgrounds.

The Lichnerowicz--Obata theorem is one of the classical theorem about the first eigenvalue of~the Laplacian.
Lichnerowicz showed the optimal comparison result for the first eigenvalue when the Riemannian manifold has positive Ricci curvature, and Obata showed that the equality of~the Lichnerowicz estimate implies that the Riemannian manifold is isometric to the standard sphere.
In the following, $\lambda_k(g)$ denotes the $k$-th positive eigenvalue of the minus Laplacian $-\Delta:=-\tr_g \Hess$ acting on functions.
\begin{Thm}[Lichnerowicz--Obata theorem]\label{LiOb}
Take an integer $n\geq 2$.
Let $(M,g)$ be an $n$-dimen\-sional closed Riemannian manifold. If $\Ric \geq (n-1) g$, then $\lambda_1(g)\geq n$.
The equality $\lambda_1(g)=n$ holds if and only if $(M,g)$ is isometric to the standard sphere of radius~$1$.
\end{Thm}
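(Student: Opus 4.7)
The plan is to handle the inequality and the rigidity separately, both via the Bochner formula applied to a first eigenfunction.

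For the inequality, let $f$ satisfy $-\Delta f = \lambda_1(g) f$ with $f \not\equiv 0$. The Bochner identity gives
\begin{equation*}
\tfrac{1}{2}\Delta |\nabla f|^2 = |\Hess f|^2 + \langle \nabla \Delta f, \nabla f\rangle + \Ric(\nabla f, \nabla f).
\end{equation*}
Integrating over $M$, using the divergence theorem on the left-hand side and $\Delta f = -\lambda_1(g) f$, one obtains
\begin{equation*}
\int_M |\Hess f|^2 \, d\Vol = \lambda_1(g) \int_M |\nabla f|^2\, d\Vol - \int_M \Ric(\nabla f,\nabla f)\, d\Vol.
\end{equation*}
Applying the Cauchy--Schwarz inequality $|\Hess f|^2 \geq \tfrac{1}{n}(\tr \Hess f)^2 = \tfrac{1}{n}(\Delta f)^2$, together with $\Ric \geq (n-1)g$ and the identity $\int (\Delta f)^2 = \lambda_1(g)\int |\nabla f|^2$, yields $\lambda_1(g)/n \leq \lambda_1(g) - (n-1)$, hence $\lambda_1(g) \geq n$. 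This is the Lichnerowicz half and it is essentially mechanical.

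For the rigidity statement, equality $\lambda_1(g) = n$ forces equality in both inequalities pointwise: $\Ric(\nabla f,\nabla f) = (n-1)|\nabla f|^2$ and, crucially, $\Hess f$ is a pure trace, i.e.\ $\Hess f = \tfrac{\Delta f}{n} g = -f\, g$ (after rescaling $f$). This is the Obata equation, and producing it from the equality case is the first concrete step.

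The main obstacle is deducing from the Obata equation $\Hess f + f g = 0$ that $(M,g)$ is isometric to the unit sphere. My plan for this is the classical Obata argument: along any unit-speed geodesic $\gamma$, the restriction $u(t) = f(\gamma(t))$ satisfies the ODE $u'' + u = 0$, so $f$ oscillates like a spherical height function. From this, one shows that $f$ attains a maximum $+a$ and minimum $-a$ at unique points $p_+$ and $p_-$ (using that critical points of $f$ are isolated via the Hessian equation), that every unit-speed geodesic from $p_+$ reaches $p_-$ at time $\pi$, and that in geodesic polar coordinates around $p_+$ the function $\tfrac{1}{a}f = \cos(r)$. Combined with the equation $\Hess f = -f g$, this pins down the metric in polar coordinates as $dr^2 + \sin^2(r)\, g_{S^{n-1}}$, giving the isometry with $S^n(1)$. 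The converse, that the standard sphere saturates the bound using height functions, is an immediate check from $\Hess(x_i\lvert_{S^n}) = -x_i\, g_{S^n}$.
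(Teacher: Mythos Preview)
The paper does not actually prove Theorem~\ref{LiOb}; it is stated in the introduction as a classical background result (due to Lichnerowicz for the inequality and Obata for the rigidity) and is used only to motivate the subsequent almost-rigidity theorems. There is therefore no proof in the paper to compare your proposal against.

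That said, your argument is the standard one and is correct in outline. Two small remarks: first, no rescaling of $f$ is needed to obtain $\Hess f = -f\,g$, since $\lambda_1(g)=n$ already gives $\Hess f = \tfrac{\Delta f}{n}\,g = -f\,g$ directly. Second, in the Obata step the passage from ``$f=\cos r$ in polar coordinates and $\Hess f=-f\,g$'' to ``the metric is $dr^2+\sin^2 r\,g_{S^{n-1}}$'' deserves one more sentence: writing $g=dr^2+h_r$ on geodesic spheres, the equation $\Hess f=-f\,g$ applied to tangential vectors reads $-\tfrac12\sin r\,\partial_r h_r=-\cos r\,h_r$, which integrates to $h_r=\sin^2 r\,h_0'$ for a fixed metric $h_0'$ on $S^{n-1}$, and comparison of the $r\to 0$ asymptotics forces $h_0'$ to be the round metric. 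With that filled in, your proof is complete.
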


Petersen \cite{Pe1}, Aubry \cite{Au} and Honda \cite{Ho} showed the stability result of the Lichnerowicz--Obata theorem.
In the following, $d_{{\rm GH}}$ denotes the Gromov--Hausdorff distance and $S^n$ denotes the $n$-dimensional standard sphere of radius~1 (see Definition~\ref{DGH} for the definition of the Gromov--Hausdorff distance).
\begin{Thm}[\cite{Au, Ho, Pe1}]\label{PA}
For given an integer $n\geq 2$ and a positive real number $\epsilon>0$, there exists $\delta(n,\epsilon)>0$ such that if $(M,g)$ is an $n$-dimensional closed Riemannian manifold with $\Ric \geq (n-1) g$ and $\lambda_n(g)\leq n+\delta$, then $d_{{\rm GH}}\big(M,S^n(1)\big)\leq \epsilon$.
\end{Thm}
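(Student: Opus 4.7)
I would argue by contradiction via a compactness/rigidity scheme. Suppose the conclusion fails; then there exist $\epsilon_0>0$ and a sequence of $n$-dimensional closed Riemannian manifolds $(M_i,g_i)$ with $\Ric_{g_i}\geq (n-1)g_i$ and $\lambda_n(g_i)\to n$, yet $d_{\rm GH}(M_i,S^n(1))>\epsilon_0$. Since $\Ric\geq (n-1)g$ forces $\diam(M_i,g_i)\leq \pi$ by Myers' theorem, Gromov's precompactness theorem lets me pass to a subsequence converging in the measured Gromov--Hausdorff topology---taking the normalised volume measures, whose local densities are uniformly controlled by Bishop--Gromov volume comparison---to a compact metric measure space $(X,d,\mf)$. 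By the stability of the $\RCD(n-1,n)$ condition under mGH limits, the limit $(X,d,\mf)$ is itself an $\RCD(n-1,n)$ space.

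The second step is to identify $X$. By the spectral convergence result for $\RCD$ spaces due to Gigli--Mondino--Savar\'e, $\lambda_k(M_i,g_i)\to \lambda_k(X)$ for every $k$; in particular $\lambda_1(X)\leq \lim_i \lambda_n(g_i)=n$. The Lichnerowicz inequality valid on $\RCD(n-1,n)$ spaces (Ketterer) gives the reverse bound $\lambda_1(X)\geq n$, so $\lambda_1(X)=n$. Ketterer's Obata-type rigidity theorem then forces $X$ to be isometric to $S^n(1)$, contradicting $d_{\rm GH}(M_i,S^n(1))>\epsilon_0$ for large $i$.

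The main obstacle is assembling the three non-smooth ingredients: stability of the $\RCD$ condition under mGH limits, continuity of the spectrum of the Laplacian under mGH convergence, and an Obata-type rigidity when $\lambda_1$ attains the Lichnerowicz bound on a singular limit space. Once these are in place, the argument is purely formal. The proofs cited above predate the clean $\RCD$ formalism and instead carry out the rigidity step by hand using Cheeger--Colding structure theory applied to the $L^2$-normalised almost-eigenfunctions $f_1,\dots,f_n$ of $-\Delta$ associated with eigenvalues close to $n$: one shows that $(f_1,\dots,f_n)$ (possibly together with an auxiliary function) gives an almost isometric embedding of $M_i$ into $\R^{n+1}$ whose image is Hausdorff-close to the unit sphere, which is the technical heart of the argument and the step that requires the sharper pinching on $\lambda_n$ (rather than $\lambda_{n+1}$) due to Aubry.
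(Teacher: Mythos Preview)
The paper does not prove this statement at all: Theorem~\ref{PA} is quoted from the literature with citations to Aubry, Honda, and Petersen, and no argument is supplied in the present paper. So there is nothing to compare your proposal against here.

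That said, your sketch is broadly correct as a modern route to the result, and indeed the paper uses exactly this scheme in its proof of Theorem~\ref{mthm}: contradiction, Gromov compactness, $\RCD$ stability \cite[Theorem~3.22]{EKS}, spectral convergence \cite[Theorem~7.9]{CC3}, Lichnerowicz on $\RCD$ spaces \cite[Theorem~4.22]{EKS}, and Ketterer's Obata rigidity \cite[Theorem~1.4]{Ket2}. One point to tighten: Ketterer's theorem applied with only $\lambda_1(X)=n$ yields a spherical suspension, not $S^n(1)$ outright. You should instead use that spectral convergence gives $\lambda_1(X)=\cdots=\lambda_n(X)=n$, which by iterated application of Ketterer forces $X$ to be either $S^n(1)$ or the hemisphere $S^n_+(1)$; the latter is then excluded because the sequence is non-collapsing (the limit has full dimension $n$) and hence the limit has empty singular set in codimension one by \cite[Theorem~6.2]{CC1}. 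This is precisely how the analogous step is handled in the paper's proof of Theorem~\ref{mthm}.
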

Note that Petersen considered the pinching condition on $\lambda_{n+1}(g)$, and Aubry and Honda improved it independently.

When the Riemannian manifold admits a non-trivial parallel differential form, we have the stronger estimate.
\begin{Thm}[\cite{ Ai3, gr}]\label{gros}
Let $(M,g)$ be an $n$-dimensional closed Riemannian manifold.
Assume that $\Ric\geq (n-p-1)g$ and that there exists a nontrivial parallel $p$-form on $M$ $(2\leq p\leq n/2)$.
Then, we have
$
\lambda_1(g)\geq n-p.
$
Moreover, if $p<n/2$ and $\lambda_{n-p+1}(g)=n-p$ hold, then $(M,g)$ is isometric to a product $S^{n-p}(1)\times (X,g')$, where $(X,g')$ is some $p$-dimensional closed Riemannian manifold.
\end{Thm}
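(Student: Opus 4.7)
The plan is a refined Bochner--Lichnerowicz argument in which the parallel form $\omega$ is used to cut the effective dimension from $n$ to $n-p$. Since $|\omega|^2$ is parallel, hence constant, I normalize $|\omega|=1$. For a first eigenfunction $f$ with $-\Delta f = \lambda f$, I apply the Bochner formula not to $df$ directly (which would give only $\lambda \geq n$ under $\Ric \geq (n-1)g$) but to the $(p+1)$-form $\eta := df \wedge \omega$.

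Because $\omega$ is parallel (hence closed and co-closed), $\nabla\eta = (\nabla df)\wedge\omega$ and $d\eta=0$; a short computation expresses $\delta\eta$ in terms of $\Delta f$ and $\Hess f$ contracted with $\omega$, and shows that $\eta$ is an eigenform of the Hodge Laplacian with eigenvalue $\lambda$. Integrating the Weitzenb\"ock formula
$$\int \langle \Delta_H \eta, \eta\rangle\, d\Vol_g = \int |\nabla\eta|^2\, d\Vol_g + \int \langle \mathcal{R}(\eta), \eta\rangle\, d\Vol_g,$$
the crucial observation is that the Riemann-curvature piece of $\mathcal{R}(\eta)$ drops out on the $\omega$-slots (as $R(X,Y)\omega = 0$ because $\omega$ is parallel), so $\langle \mathcal{R}(\eta),\eta\rangle$ reduces to a Ricci contraction along the $1$-form factor only, giving the pointwise bound $\langle \mathcal{R}(\eta),\eta\rangle \geq (n-p-1)|\eta|^2$ from $\Ric\geq(n-p-1)g$. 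Coupling this with a refined Kato-type inequality $|\nabla\eta|^2 \geq \tfrac{1}{n-p}|\delta\eta|^2$ (sharp on forms of the algebraic type $\alpha\wedge\omega$, where only $n-p$ of the $n$ slots remain free once $\omega$ is fixed) and using $\int|\delta\eta|^2 = \lambda\int|\eta|^2$, one obtains $\lambda \geq (n-p-1) + \tfrac{\lambda}{n-p}$, hence $\lambda \geq n-p$.

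For the rigidity statement with $p<n/2$ and $\lambda_{n-p+1}(g)=n-p$, the $n-p+1$ linearly independent eigenfunctions $f_1,\ldots,f_{n-p+1}$ at this eigenvalue each saturate every inequality above. Equality in the refined Kato step forces each $\Hess f_i$ to be supported on a common $(n-p)$-dimensional subspace at every point; equality in the Weitzenb\"ock step forces $\omega$ to be decomposable with a well-defined orthogonal complement $V$ of rank $n-p$. Together, $V$ becomes a parallel distribution and $\Hess f_i|_V = -f_i\, g|_V$ (the Obata equation on $V$), while the assumption $p<n/2$ distinguishes the sphere factor (the larger one) and rules out the algebraically degenerate case. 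By the de~Rham decomposition theorem, $(M,g)$ splits isometrically as $N^{n-p}\times X^p$, and the Obata-type functions on $N$ identify $N\cong S^{n-p}(1)$.

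The main obstacle is the sharp Weitzenb\"ock--Kato pair at the heart of the bound: one must verify that the Riemann-curvature contribution to $\mathcal{R}(\eta)$ genuinely vanishes when $\omega$ is parallel (an algebraic cancellation from $R(X,Y)\omega=0$ summed over appropriate indices), and that the refined Kato inequality achieves the sharp constant $1/(n-p)$ on $(p+1)$-forms of type $\alpha\wedge\omega$. Both rely on careful expansion in an orthonormal frame adapted to the parallel structure of $\omega$.
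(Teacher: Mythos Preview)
This theorem is not proved in the paper at all; it is quoted from the references \cite{gr} (for the eigenvalue bound $\lambda_1\ge n-p$) and \cite{Ai3} (for the rigidity under $\lambda_{n-p+1}=n-p$), so there is no in-paper proof to compare your proposal against.

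That said, your strategy is in the spirit of Grosjean's argument, namely to exploit the parallel form $\omega$ to sharpen the Bochner inequality, but several of your intermediate claims are not correct as written and the actual mechanism is different from what you describe. First, the assertion that the Riemann-curvature contribution to the Weitzenb\"ock term $\mathcal{R}(\eta)$ on $(p+1)$-forms vanishes because $R(X,Y)\omega=0$ is not justified: the curvature endomorphism on $\eta=df\wedge\omega$ involves contractions of the form $e^j\wedge\iota_{e_i}R(e_i,e_j)\eta$, and while $R(e_i,e_j)$ annihilates $\omega$ as a derivation, the interior product $\iota_{e_i}$ does not commute with wedging by $\omega$, so cross-terms survive and the reduction to a pure Ricci term on the $df$-slot does not follow from the one-line reason you give. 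Second, your ``refined Kato inequality'' $|\nabla\eta|^2\ge\tfrac{1}{n-p}|\delta\eta|^2$ is not a Kato inequality (those compare $|\nabla|\eta||$ with $|\nabla\eta|$), and the constant $1/(n-p)$ is asserted rather than derived; note also that $\delta\eta$ carries, beyond the $(\Delta f)\omega$ piece, the Hessian term $\sum_i(\nabla_{e_i}df)\wedge\iota_{e_i}\omega$, so the bookkeeping is more delicate than your sketch suggests. Grosjean's actual route stays with the scalar Bochner identity for $f$ and uses $\omega$ to obtain an improved pointwise algebraic lower bound for $|\Hess f|^2$ (effectively replacing $1/n$ by $1/(n-p)$ in the trace inequality), rather than passing through the Hodge Laplacian on $(p+1)$-forms.

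For the rigidity half, be aware that the hypothesis here is on $\lambda_{n-p+1}$, not on $\lambda_1$; this is a genuinely stronger statement than the equality case of Grosjean's estimate and is attributed to \cite{Ai3}. Your outline (equality forces $\omega$ decomposable, a parallel rank-$(n-p)$ distribution, Obata equations on that factor, then de~Rham splitting) is plausible for the $\lambda_1$-equality case, but getting from $\lambda_{n-p+1}=n-p$ to a full set of Obata functions spanning an $(n-p+1)$-dimensional space on a common factor requires additional work of Petersen--Aubry type that your sketch does not address.
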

To simplify the numbers appearing in the theorem, we consider the assumption $\Ric\geq $ \mbox{$(n-p-1)g$} instead of $\Ric\geq (n-1)g$.
By scaling, the estimate in Theorem~\ref{LiOb} becomes $\lambda_1(g)\geq n(n-p-1)/(n-1)$ when $\Ric\geq (n-p-1)g$.
Note that we have $n-p>n(n-p-1)/(n-1)$.

To state the almost version of Theorem~\ref{gros}, we introduce the first eigenvalue of the connection Laplacian acting on $p$-forms $\lambda_1(\Delta_{C,p})$ for a closed Riemannian manifold $(M,g)$:
\[
\lambda_1(\Delta_{C,p}):=\inf\left\{\frac{\|\nabla \omega\|_{L^2}^2}{\|\omega\|_{L^2}^2}\colon \omega\in\Gamma\Big(\bigwedge^p T^\ast M\Big) \text{ with }\omega\neq 0\right\} .
\]
Note that there exists a non zero $p$-form $\omega$ with $\|\nabla \omega\|_{L^2}^2\leq \delta\|\omega\|_{L^2}^2$ for some $\delta>0$ if and only if $\lambda_1(\Delta_{C,p})\leq \delta$ holds.
For arbitrary integers $n$, $p$ with $2\leq p\leq n/2$ and a real number $\epsilon>0$, considering a small perturbation of $S^{n-p}(1)\times S^p(r_{n,p})$, we can find an $n$-dimensional closed Riemannian manifold with $\Ric\geq (n-p-1) g$ such that $0<\lambda_1(\Delta_{C,p})<\epsilon$ holds. Here we defined $r_{n,p}:=\sqrt{(p-1)/(n-p-1)}$.
In other words, we do not have the gap theorem for the first eigenvalue of the connection Laplacian $\lambda_1(\Delta_{C,p})$ if we only assume a lower Ricci curvature bound.

Let us state the almost version of the eigenvalue estimate.
\begin{Thm}[\cite{Ai3}]
For given integers $n\geq 4$ and $2\leq p \leq n/2$, there exists a constant \mbox{$C(n,p)>0$} such that if $(M,g)$ is an $n$-dimensional closed Riemannian manifold with $\Ric_g\geq (n-p-1)g$,
then we have
\begin{gather*}
\lambda_1(g)\geq n-p-C(n,p)\lambda_1(\Delta_{C,p})^{1/2}.
\end{gather*}
\end{Thm}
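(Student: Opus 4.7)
The plan is to quantify the proof of the rigid estimate $\lambda_1(g)\geq n-p$ in Theorem~\ref{gros} and track how the error depends on $\|\nabla\omega\|_{L^2}$. In the rigid case, for a first eigenfunction $f$ with $-\Delta f=\lambda_1(g)f$ and a parallel $p$-form $\omega$, one forms the auxiliary $(p-1)$-form $\eta:=i_{\nabla f}\omega$. Because $\omega$ is parallel, $\nabla\eta$ is computable purely from $\Hess f$ contracted with $\omega$, and applying the Bochner--Weitzenböck identity to $\eta$ together with $\Ric\geq (n-p-1)g$ forces $\lambda_1(g)(\lambda_1(g)-(n-p))\leq 0$, hence $\lambda_1(g)\geq n-p$.

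I would repeat the construction with $\omega$ replaced by a near-minimizer of the Rayleigh quotient defining $\lambda_1(\Delta_{C,p})$, normalized so that $\|\omega\|_{L^2}=1$ and $\|\nabla\omega\|_{L^2}^2\leq\lambda_1(\Delta_{C,p})+\varepsilon$. Then $\nabla\eta$ acquires an extra term of the form $df\otimes\nabla\omega$, and the Bochner-type inequality becomes
\begin{equation*}
\lambda_1(g)\bigl(\lambda_1(g)-(n-p)\bigr)\int_M|\eta|^2\,d\Vol \leq C_1(n,p)\,\|\nabla\omega\|_{L^2}\cdot E(f,\omega),
\end{equation*}
where $E(f,\omega)$ is a product of $L^q$-norms of $df$, $\Hess f$, and $\omega$. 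Since $\int_M|\eta|^2\,d\Vol$ is comparable to $\|df\|_{L^2}^2=\lambda_1(g)$ up to an error of the same order, the inequality rearranges to a quadratic in $\lambda_1(g)$ whose solution gives the claimed bound with exponent $1/2$ on $\lambda_1(\Delta_{C,p})$.

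The main obstacle is uniformly controlling the $L^q$-norms of $f$, $df$, $\Hess f$, $\omega$, and $\nabla\omega$ with constants depending only on $n$ and $p$. Since only a lower Ricci curvature bound is available, this requires Moser iteration for the eigenvalue equations of $f$ and of $\omega$, built on the Sobolev inequality coming from the Bishop--Gromov volume comparison under $\Ric\geq (n-p-1)g$. Controlling $\Hess f$ in $L^2$ via the Bochner identity for functions, and ensuring that all the constants produced remain independent of the manifold, is the most delicate ingredient, and it is what forces the stated exponent $1/2$ to appear: any attempt to gain a higher power of $\lambda_1(\Delta_{C,p})$ would require a second-order perturbative expansion of $\eta$ that the Ricci-only hypothesis does not seem to afford.
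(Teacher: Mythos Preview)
This theorem is cited from the author's earlier preprint \cite{Ai3} and is \emph{not} proved in the present paper; it is recalled in the introduction only as background motivating Theorem~\ref{prevm} and the Main Theorem. Consequently there is no proof here against which to compare your proposal.

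On the proposal itself: the strategy of quantifying Grosjean's Bochner argument via $\eta=i_{\nabla f}\omega$ is the natural one and is essentially what is carried out in \cite{Ai3}. One step you gloss over deserves care. You assert that $\int_M|\eta|^2$ is comparable to $\|df\|_{L^2}^2$, but pointwise $|i_{\nabla f}\omega|^2$ can vanish even when $\nabla f\neq0$ and $\omega\neq0$: it sees only the component of $\nabla f$ along the ``support'' of $\omega$. In the rigid case this is handled by pairing $i_{\nabla f}\omega$ with $df\wedge\omega$ (equivalently $i_{\nabla f}(*\omega)$) through the algebraic identity $|i_X\omega|^2+|X^\flat\wedge\omega|^2=|X|^2|\omega|^2$, and the quantitative version needs both pieces as well; your outline should make this explicit rather than treat the lower bound as automatic. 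Also, the displayed inequality $\lambda_1(\lambda_1-(n-p))\int|\eta|^2\le C_1\|\nabla\omega\|\cdot E$ cannot by itself yield the conclusion, since when $\lambda_1<n-p$ the left side is already nonpositive; the argument must be arranged so that the defect $n-p-\lambda_1$ appears with a definite sign against a quantity bounded below. Your remark that Moser iteration on $|\omega|^2$ (a scalar subsolution of an elliptic inequality coming from $\nabla^*\nabla\omega=\lambda_1(\Delta_{C,p})\omega$) suffices for the $L^\infty$ bound on $\omega$ under only a Ricci lower bound is correct.
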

This theorem recovers the estimate in Theorem~\ref{gros} when $\lambda_1(\Delta_{C,p})=0$.

We next state the approximation result to the product space.
\begin{Thm}[\cite{Ai3}]\label{prevm}
For given integers $n\geq 5$ and $2\leq p < n/2$ and a positive real number $\epsilon>0$, there exists $\delta=\delta(n,p,\epsilon)>0$ such that if $(M,g)$ is an $n$-dimensional closed Riemannian manifold with $\Ric_g\geq (n-p-1)g$,
\begin{gather*}
\lambda_{n-p+1}(g)\leq n-p+\delta
\end{gather*}
and
\begin{gather*}
\lambda_1(\Delta_{C,p})\leq \delta,
\end{gather*}
then $M$ is orientable and
\[
d_{{\rm GH}}\big(M,S^{n-p}(1)\times X\big)\leq \epsilon,
\]
where $X$ is some compact metric space.
\end{Thm}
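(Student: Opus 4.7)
The plan is to argue by contradiction combined with a compactness argument. Suppose the conclusion fails: there exist $\epsilon>0$ and a sequence of $n$-dimensional closed Riemannian manifolds $(M_i,g_i)$ with $\Ric_{g_i}\geq (n-p-1)g_i$, $\lambda_{n-p+1}(g_i)\to n-p$ and $\lambda_1(\Delta_{C,p})(g_i)\to 0$, for which each $M_i$ is either non-orientable or satisfies $d_{{\rm GH}}(M_i,S^{n-p}(1)\times X)>\epsilon$ for every compact metric space $X$. Gromov precompactness together with stability of the $\RCD(n-p-1,n)$ condition yields, after passing to a subsequence, a measured Gromov--Hausdorff limit $(Y,d_Y,\mf_Y)$ which is an $\RCD(n-p-1,n)$ space. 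The strategy is to transfer both pinching assumptions into rigidity statements on $Y$, apply a synthetic version of Theorem~\ref{gros}, and contradict the choice of $(M_i,g_i)$ by stability.

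The hypothesis $\lambda_{n-p+1}(g_i)\to n-p$ gives, via spectral convergence of the Laplacian along mGH convergence, $n-p+1$ mutually $L^2$-orthogonal eigenfunctions $f_1,\dots,f_{n-p+1}$ on $Y$ of eigenvalue exactly $n-p$. Following the scheme that underlies Theorem~\ref{PA}, the map $\Phi:=(f_1,\dots,f_{n-p+1})\colon Y\to\R^{n-p+1}$ lands on the unit sphere $S^{n-p}(1)\subset\R^{n-p+1}$, and its lifts $\Phi_i\colon M_i\to\R^{n-p+1}$ are $\epsilon_i$-Gromov--Hausdorff approximations with $\epsilon_i\to 0$. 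In parallel, $\lambda_1(\Delta_{C,p})(g_i)\to 0$ produces $L^2$-normalized $p$-forms $\omega_i$ with $\|\nabla\omega_i\|_{L^2}\to 0$; applying Honda's convergence theory for $L^2$-tensor fields along Ricci limits, one extracts a nontrivial limit $p$-form $\omega$ on the regular set of $Y$ that is parallel in the sense of $\RCD$-calculus.

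The heart of the argument is the rigidity step: on an $\RCD(n-p-1,n)$ space carrying a nontrivial parallel $p$-form and satisfying $\lambda_{n-p+1}(Y)=n-p$ with $p<n/2$, the space $Y$ must split isometrically as $S^{n-p}(1)\times X$ for some compact metric space $X$, the synthetic analogue of Theorem~\ref{gros}. The plan is to use $\omega$ to produce an orthogonal decomposition of the tangent module on the regular part, and to combine this with an Obata-type equality satisfied by the components of $\Phi$ to build a metric submersion $Y\to S^{n-p}(1)$ with isometric fibers; the condition $p<n/2$ together with the nontriviality of $\omega$ excludes the degenerate case $Y=S^n(1)$. Stability then forces $M_i$ to be $\epsilon_i$-close to $S^{n-p}(1)\times X$ in the Gromov--Hausdorff sense, contradicting the failure assumption. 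Orientability of $M_i$ for large $i$ is handled as a separate conclusion: the limit parallel $p$-form, together with the product structure of $Y$, trivializes the top exterior power of the almost-parallel $p$-subbundle on $M_i$, and its orthogonal complement is trivialized by $d\Phi_i$ pulling back an orientation of $S^{n-p}(1)$, so that $\wedge^n T^*M_i$ admits a nowhere-vanishing section for $i$ large.

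The main obstacle is the rigidity step on the potentially singular limit $Y$: the parallel $p$-form exists a~priori only on the regular set, and one must show that it is compatible with the splitting induced by $\Phi$ and that it rigidly constrains the geometry of the fibers. This requires adapting the smooth second-variation and Bochner arguments of~\cite{Ai3,gr} to the $\RCD$ framework, particularly at points where the local dimension of $Y$ might drop, and requires careful control over the interaction between $\omega$, the spectral map $\Phi$, and the measure $\mf_Y$. Once the rigidity on $Y$ is secured, the remaining pieces --- spectral convergence, $L^2$-convergence of tensor fields, and stability transfer to $(M_i,g_i)$ --- are in place from the existing theory of Ricci limits and $\RCD$-spaces.
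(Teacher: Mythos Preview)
This theorem is not proved in the present paper; it is quoted from~\cite{Ai3}. What the paper does record, in Proposition~\ref{prev}, is the mechanism of the original proof: one works \emph{directly and quantitatively on $(M,g)$}, uses the first $n-p+1$ eigenfunctions to build $\widetilde\Psi\colon M\to\R^{n-p+1}$, shows $|\widetilde\Psi|$ is $C^0$-close to~$1$, identifies a compact set $A_f\subset M$ on which a chosen eigenfunction is nearly maximal, and proves by hand that $x\mapsto(\widetilde\Psi(x)/|\widetilde\Psi(x)|,\,a_f(x))$ is an $\epsilon$-Hausdorff approximation $M\to S^{n-p}(1)\times A_f$. The compact metric space $X$ in the statement is simply $A_f$ with the induced metric; no passage to a limit space is involved.

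Your route via contradiction and a measured Gromov--Hausdorff limit is genuinely different, but the step you label ``the heart of the argument'' is a gap rather than a step: you need a synthetic analogue of Theorem~\ref{gros} on $\RCD(n-p-1,n)$ spaces, and no such result is available. Establishing it would require at least the full content of the smooth argument in~\cite{Ai3} plus the extra difficulties you list (the parallel form is defined only on the regular set, the limit may be collapsed, compatibility with~$\Phi$), so you are proposing to deduce the theorem from a strictly harder non-smooth one. Two further concrete problems: the $L^2$-convergence theory for $p$-forms along Ricci limits that you invoke to produce a nontrivial limit parallel $p$-form on~$Y$ is far less developed than for functions or vector fields, and in a possibly collapsed limit there is no reason the limit object should be nonzero or even well-defined; and your orientability argument requires the almost-parallel $p$-form to be \emph{pointwise} nonvanishing on $M_i$, not merely $L^2$-nontrivial, which is a separate quantitative estimate that you have not supplied and which in~\cite{Ai3} is obtained directly on the manifold.
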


In this article we study the structure of the metric space $X$ in this theorem and show that $X$ with some appropriate Borel measure satisfies the $\RCD(n-p-1,p)$ condition (see Proposition~\ref{strX}), which means a synthetic notion of ``$\Ric\geq n-p-1$ and $\dim\leq p$ with Riemannian structure'' (see Definition~\ref{rcd}).
As a consequence, we can show the estimate $\lambda_{n-p+2}(g)\geq p(n-p-1)/(p-1)-\epsilon$ under the assumption of Theorem~\ref{prevm} (see Theorem~\ref{n-p+2}) and the following theorem.
\begin{Ma}
For given integers $n\geq 5$ and $2\leq p < n/2$ and a positive real number $\epsilon>0$, there exists $\delta=\delta(n,p,\epsilon)>0$ such that if $(M,g)$ is an $n$-dimensional closed Riemannian manifold with $\Ric_g\geq (n-p-1)g$,
\begin{gather*}
\lambda_{n-p+1}(g)\leq n-p+\delta,\qquad
\lambda_{n+1}(g)\leq \frac{p(n-p-1)}{p-1}+\delta
\end{gather*}
and
\begin{gather*}
\lambda_1(\Delta_{C,p})\leq \delta,
\end{gather*}
then
\[
d_{{\rm GH}}\bigg(M,S^{n-p}(1)\times S^p\bigg(\sqrt{\frac{p-1}{n-p-1}}\bigg)\bigg)\leq \epsilon,
\]
and $M$ is diffeomorphic to $S^{n-p}\times S^p$.
\end{Ma}
We have the last assertion by the topological stability theorem due to Cheeger--Colding \cite[Theorem~A.1.12]{CC1}.
We show the main theorem including the case when $\lambda_1(\Delta_{C,n-p})\leq \delta$ (see Theorem~\ref{mthm}).

\section{Preliminaries}
\subsection{Basic notation}
We first recall some basic definitions and fix our convention.

%Let $C(u_1,\ldots,u_l)>0$ denote a positive function depending only on the numbers $u_1,\ldots,u_l$.
%For a set $X$, $\Card X$ denotes a cardinal number of $X$.

Let $(M,g)$ be a closed Riemannian manifold.
For any $p\geq 1$, we use the normalized $L^p$-norm:
\begin{gather*}
\|f\|_{L^p}^p:=\frac{1}{\Vol(M)}\int_M |f|^p\,{\rm d}\mu_g,
\end{gather*}
and $\|f\|_{L^\infty}:=\mathop{\mathrm{ess~sup}}\limits_{x\in M}|f(x)|$ for a measurable function $f$ on $M$. We also use these notation for~tensors.
We have $\|f\|_{L^p}\leq \|f\|_{L^q}$ for any $p\leq q \leq \infty$.

Let $\nabla$ denote the Levi-Civita connection.
Throughout this paper,
 $0=\lambda_0(g)< \lambda_1(g) \leq \lambda_2(g) \leq\cdots \to \infty$
denotes the eigenvalues of the minus Laplacian $-\Delta=-\tr\Hess$ acting on~functions counted with multiplicities.
For $p=0,1,\ldots, n$, let
\[
\lambda_1(\Delta_{C,p}):=\inf\left\{\frac{\|\nabla \omega\|_{L^2}^2}{\|\omega\|_{L^2}^2}\colon \omega\in\Gamma\Big(\bigwedge^p T^\ast M\Big) \text{ with }\omega\neq 0\right\} .
\]

For metric space $(X,d)$ and $k\in\R_{\geq 0}$, let $\Ha^k$ denote the $k$-dimensional Hausdorff measure.
If~$0<\Ha^k(X)<\infty$, let $\nHa^k$ denote the normalized $k$-dimensional Hausdorff measure:
\[
\nHa^k:=\frac{1}{\Ha^k(X)}\Ha^k.
\]

In this article, for metric spaces $(X_i,d_i)$ ($i=1,2$), let $d_1\times d_2$ denote the distance on $X_1\times X_2$ satisfying
\[
(d_1\times d_2)^2\left((x_1,x_2),(y_1,y_2)\right)=d_1^2(x_1,y_1)+d_2^2(x_2,y_2)
\]
for $(x_1,x_2),(y_1,y_2)\in X_1\times X_2$.

\subsection{Metric measure spaces}
In this article we only consider a compact metric measure space with full support and unit total mass for simplicity of the description because it is enough for our purpose.
\begin{Def}
In this article we say that $(X,d,\mf)$ is a compact metric measure space if $(X,d)$ is a compact metric space and $\mf$ is a Borel measure with $\supp \mf =X$ and $\mf(X)=1$.
\end{Def}

We introduce some functional analytic tools on a metric measure space.
Our main references are \cite{AGS1, Gig,GP}.

\begin{Def}
Let $(X,d,\mf)$ be a compact metric measure space.
\begin{itemize}\itemsep=0pt
\item \emph{Local Lipschitz constant}. Let $\LIP(X)$ denote the set of the Lipschitz functions on $X$.
For~each $f\in \LIP(X)$ and $x\in X$, we define a local Lipschitz constant $\Lip (f)(x)$ by
\[
\Lip (f)(x):=\limsup_{y\to x} \frac{|f(x)-f(y)|}{d(x,y)}
\]
if $x\in X$ is not an isolated point, and $\Lip(f)(x)=0$ otherwise.
\item \emph{Cheeger energy}. For each $f\in L^2(X)$, we define the Cheeger energy $\Ch(f)\in[0,\infty]$ by
\[
\Ch(f):=\frac{1}{2}\inf\left\{\liminf_{i\to \infty}\int_X (\Lip(f_i))^2 \,{\rm d}\mf\colon f_i\in \LIP(X) \text{ and } \lim_{i\to\infty}\|f-f_i\|_{L^2}=0\right\} .
\]
Define
\[
W^{1,2}(X)=W^{1,2}(X,d,\mf):=\{f\in L^2(X)\colon \Ch(f)<\infty\}.
\]
We have that $W^{1,2}(X)$ is a Banach space with the norm $\|f\|_{W^{1,2}}=(\|f\|_{L^2}^2+2\Ch(f))^{1/2}$.
\item \emph{Minimal relaxed gradient}. We say that $|D f|\in L^2(X)$ is the minimal relaxed gradient of~$f\in W^{1,2}(X)$ if there exists a sequence $\{f_i\}_{i=1}^\infty$ of Lipschitz function such that
$\lim\limits_{i\to \infty}\|f-f_i\|_{L^2}=0$, $\lim\limits_{i\to\infty}\|\, |D f|-\Lip (f_i)\|_{L^2}=0$ and
\[
\Ch(f)=\frac{1}{2}\int_X |D f|^2 \,{\rm d}\mf.
\]
For any $f\in W^{1,2}(X)$, the minimal relaxed gradient $|D f|\in L^2(X)$ exists and unique. See \cite[Definition~4.2 and~Lemma 4.3]{AGS1}.
\item \emph{Sobolev-to-Lipschitz property.} We say that $(X,d,\mf)$ satisfies the Sobolev-to-Lipschitz pro\-perty if any $f\in W^{1,2}(X)$ with $|D f|\leq 1$ $\mf$-a.e.\ in $X$ is a $1$-Lipschitz function on $X$ (more precisely, $f$ has a $1$-Lipschitz representative).
\item \emph{Infinitesimally Hilbertian}. We say that $(X,d,\mf)$ is infinitesimally Hilbertian if $\Ch$ is a~quad\-ratic form.
This condition holds if and only if $\big(W^{1,2}(X),\|\cdot\|_{W^{1,2}}\big)$ is a Hilbert space.
In this case, we define $\mathcal{E}\colon W^{1,2}(X)\times W^{1,2}(X)\to \R$ by
\[
\mathcal{E}(f,g)=\frac{1}{2}(\Ch(f+g)-\Ch(f-g)).
\]
\item \emph{Laplacian}. If $(X,d,\mf)$ is infinitesimally Hilbertian, then we define
\[
\D(\Delta):=\left\{f\in W^{1,2}(X)\colon \begin{array}{@{}l@{}}\text{there exists $\Delta f\in L^2(X)$ such that we have}\\
\text{$\mathcal{E}(f,g)=-\int_X g \Delta f \,{\rm d}\mf$ for any $g\in W^{1,2}(X)$}\end{array}  \right\} .
\]
For each $f\in \D(\Delta)$, $\Delta f\in L^2(X)$ is uniquely determined.
\item \emph{The function} $\langle D f_1,D f_2\rangle$. If $(X,d,\mf)$ is infinitesimally Hilbertian, then we define \linebreak $\langle D f_1,D f_2\rangle\in L^1(X)$ for $f_1,f_2\in W^{1,2}(X)$ by
\[
\langle D f_1,D f_2\rangle=\lim_{\epsilon\to 0}\frac{1}{2\epsilon}(|D (f_1+\epsilon f_2)|^2-|D f_1|^2)\in L^1(X).
\]
This notion is well-defined by the convexity of the minimal relaxed gradient (see \cite[Definition~4.12]{AGS2}).
We have that $\langle D f_1,D f_1\rangle=|D f_1|^2$ $\mf$-a.e.\ in $X$, that $|\langle D f_1, D f_2\rangle|\leq |D f_1| |D f_2 |$ $\mf$-a.e.\ in $X$, that $\langle\cdot,\cdot\rangle$ is a symmetric bilinear form, and that
\[
\mathcal{E}(f_1,f_2)=\int_X \langle D f_1,D f_2\rangle\,{\rm d}\mf
\]
by \cite[Propositions 4.13 and 4.14, Theorem~4.18]{AGS2}.
\item \emph{Heat flow}. Let $(X,d,\mf)$ be infinitesimally Hilbertian.
Let $\{P_t f\}_{t>0}$ denote the gradient flow of the Cheeger energy $\Ch$ starting from $f\in L^2(X)$ (see \cite[Definition~5.2.5]{GP}).
The flow $\{P_t f\}_{t>0}$ is called the heat flow and characterized as the unique $C^1$ map $(0,\infty)\to L^2(X)$ (it turns out to be $C^\infty$ \cite[Proposition~5.2.12]{GP}) satisfying the following conditions (see \cite[Theorem~5.1.12]{GP}):
\begin{itemize}\itemsep=0pt
\item We have $P_t f\to f$ strongly in $L^2(X)$ as $t\to 0$.
\item For each $t>0$, we have that $P_t f\in \D(\Delta)$ and that
\[
\frac{\rm d}{{\rm d} t}P_t f =\Delta P_t f
\]
in $L^2(X)$.
\end{itemize}
Moreover, we have the following properties (see \cite[Section 5.2.2]{GP}):
\begin{itemize}\itemsep=0pt
\item For each $t>0$ and $f\in L^2(X)$, we have
\begin{gather*}
\Ch(P_t f)\leq \inf_{g\in W^{1,2}(X)}\left(\Ch(g)+\frac{\|f-g\|_{L^2}^2}{2t}\right),
\\
\|\Delta P_t f\|_{L^2}^2\leq \inf_{g\in \D(\Delta)}
\left(\|\Delta g\|_{L^2}^2+\frac{\|f-g\|_{L^2}^2}{t^2}\right).
\end{gather*}
See also \cite[Theorem~5.1.12]{GP}.
\item For each $t>0$, $P_t\colon L^2(X)\to L^2(X)$ is a linear map satisfying
\[
\int_X g P_t f\,{\rm d}\mf=\int_X f P_t g\,{\rm d}\mf
\]
for any $f,g\in L^2(X)$.
\item For each $s,t>0$, we have $P_{s+t}=P_s\circ P_t$.
\item For each $f\in \D(\Delta)$ and $s>0$, we have that
\[
\lim_{t\to 0+} \frac{P_t f-f}{t}=\Delta f
\]
in $L^2(X)$ and that $\Delta P_s f =P_s \Delta f$.
\item For each $t>0$, $c\in \R$ and $f\in L^2(X)$ with $f\leq c$ $\mf$-a.e.\ in $X$, we have $P_t f\leq c$ $\mf$-a.e.\ in $X$.
\item For each $t>0$, $p\in [1,\infty)$ and $f\in L^2(X)\cap L^p(X)$, we have $\|P_t f\|_{L^p}\leq \|f\|_{L^p}$.
In~particular, we can extend the map $P_t\colon L^2(X)\cap L^p(X)\to L^2(X)\cap L^p(X)$ to~$P_t$: $L^p(X)\to L^p(X)$.
\end{itemize}
We can also show the following properties by the above properties:
\begin{itemize}\itemsep=0pt
\item For each $f\in W^{1,2}(X)$, we have $\Ch(P_t f-f)\to 0$ as $t\to 0$. Indeed, the properties $\Ch(P_t f)\leq \Ch(f)$ and $P_t f\to f$ in $L^2$ as $t\to 0$ imply that $P_t f$ converges to $f$ weakly in~$W^{1,2}$, and so $\limsup\limits_{t\to 0}\Ch(P_t f)\leq \Ch(f)$ implies that $P_t f$ converges to $f$ strongly in~$W^{1,2}$ as~$t\to 0$.
\item For each $p\in [1,\infty)$ and $f\in L^p(X)$, we have $\|P_t f-f\|_{L^p}\to 0$ as $t\to 0$. This can be verified by applying the above properties to the truncated function $f_1=\max\{\min\{f,c\},-c\}$ for a sufficiently large constant \mbox{$c\!>\!0$} and its remaining part~\mbox{$f\!-\!f_1$}.
\end{itemize}
\item \emph{Test functions}. Let $(X,d,\mf)$ be infinitesimally Hilbertian. We define
\[
\TestF(X):=\big\{f\in\D(\Delta)\cap L^\infty(X)\colon |\nabla f|\in L^\infty(X)\text{ and }\Delta f\in W^{1,2}(X)\big\} .
\]
\item \emph{Pre-cotangent module}. We define
\[
\PCM:=\left\{\{(f_i,A_i)\}\colon\,
\begin{array}{@{}l@{}}\text{$\{A_i\}_{i=1}^\infty$ is a pairwise disjoint family of Borel subsets of $X$}\\
\text{with $\bigcup_{i} A_i=X$, $f_i\in W^{1,2}(X)$ with $\sum_{i} \int_{A_i} |D f_i|^2\,{\rm d}\mf<\infty$}
\end{array}
\right\}.
\]
We say that $\{(f_i,A_i)\}\in \PCM$ is equivalent to $\{(g_i,B_i)\}\in \PCM$ (denote it by $\{(f_i,A_i)\}\sim\{(g_i,B_i)\}$) if
\[
|D (f_i-g_j)|=0 \text{ $\mf$-a.e.\ in $A_i\cap B_j$ for each $i,j\in\Z_{>0}$}.
\]
We define
\begin{gather*}
|\cdot|\colon \PCM/{\sim} \to L^2(X),\qquad
[\{(f_i,A_i)\}]\mapsto \sum_{i=1}^\infty \chi_{A_i}|D f_i|,
\\
\|\cdot\|_{L^2}\colon \PCM/{\sim} \to [0,\infty),\quad [\{(f_i,A_i)\}]\mapsto\big\||[\{(f_i,A_i)\}]|\big\|_{L^2}\!
=\!\bigg(\!\sum_{i}\! \int_{A_i}\!\! |D f_i|^2\,{\rm d}\mf\!\bigg)^{1/2}\!\!\!,
\end{gather*}
where $\chi_{A_i}$ denotes the characteristic function.
Then, $(\PCM/{\sim},\|\cdot\|_{L^2})$ is naturally equipped with the structure of the normed vector space.
Moreover, we define
\[
\bigg(\sum_{i=1}^\infty a_i \chi_{A_i}\bigg)\cdot [\{(f_i,B_i)\}]:=[\{(a_i f_j,A_i\cap B_j)\}_{i,j}]
\]
for each $\sum_{i=1}^\infty a_i \chi_{A_i}\in \Sf(X)$ and $[\{(f_i,B_i)\}]\in \PCM/{\sim}$, where $\Sf(X)$ is defined by
\[
\Sf(X):=\left\{\sum_{i=1}^\infty a_i \chi_{A_i}\colon
\begin{array}{@{}l@{}}\text{$\{A_i\}_{i=1}^\infty$ is a pairwise disjoint family of Borel subsets of $X$}\\
\text{with $\bigcup_{i} A_i=X$, $a_i\in \R$ with $\sup_{i} |a_i|<\infty$}
\end{array} \right\} .
\]
Then, we have
$\|f\cdot \omega\|_{L^2}\leq \|f\|_{L^\infty}\|\omega\|_{L^2}$
for each $f\in\Sf(X)$ and $\omega\in \PCM/{\sim}$.
\item \emph{Cotangent module}. We define the cotangent module $L^2(T^\ast X)$ as a completion of the normed vector space $(\PCM/{\sim},\|\cdot\|_{L^2})$.
We can extend the action $\Sf(X)\times \PCM/{\sim}\to\PCM/{\sim}$ to $L^\infty(X)\times L^2(T^\ast X)\to L^2(T^\ast X)$, and
$|\cdot|\colon \PCM/{\sim} \to L^2(X)$ to $|\cdot|\colon L^2(T^\ast X) \to L^2(X)$.
Then, $L^2(T^\ast X)$ is equipped with the structure of an $L^2$-normed $L^\infty(X)$ module, i.e., we have that
\begin{gather*}
(f g)\cdot \omega=f\cdot (g\cdot \omega),\qquad
1\cdot \omega=\omega,\qquad
|\omega|\geq 0\quad \text{$\mf$-a.e.\ in X},
\\
\|\omega\|_{L^2}=\big\||\omega|\big\|_{L^2},\qquad
|f\cdot \omega|=|f||\omega|
\end{gather*}
for each $f,g\in L^\infty(X)$ and $\omega\in L^2(T^\ast X)$.
Note that we use $W^{1,2}(X)$ instead of the Sobolev class $S^2(X)$ (see \cite[Definition~2.1.4]{Gig}) in the definition of $\PCM$.
However, we can approximate elements of $S^2(X)$ by elements of $W^{1,2}(X)$ (see \cite[Proposition~2.2.5]{Gig}), and so our definition of $L^2(T^\ast X)$ coincides with \cite[Definition~2.2.1]{Gig}.
In general, $L^2(T^\ast X)$ does not need to be a Hilbert space, and $L^2(T^\ast X)$ is a Hilbert space if and only if $(X,d,\mf)$ is infinitesimally Hilbertian \cite[Proposition~2.3.17]{Gig}.
\item \emph{Pointwise scalar product}. If $(X,d,\mf)$ is infinitesimally Hilbertian, we can define a pointwise scalar product
\[
\langle[\{(f_i,A_i)\}],[\{(g_i,B_i)\}]\rangle:=\sum_{i,j=1}^\infty \chi_{A_i\cap B_j}\langle D f_i,D g_j\rangle
\]
for each $[\{(f_i,A_i)\}],[\{(g_i,B_i)\}]\in\PCM/{\sim}$ and extend it to $\langle\cdot,\cdot\rangle\colon L^2(T^\ast X)\times L^2(T^\ast X)\to L^1(X)$.
Then, $\langle\cdot,\cdot\rangle$ is symmetric and $L^2(T^\ast X)$ is a Hilbert space with the inner product defined by
\[
(\omega,\eta)\mapsto \int_X \langle\omega,\eta\rangle\,{\rm d}\mf
\]
for each $\omega,\eta\in L^2(T^\ast X)$.
Clearly, for each $\omega,\eta\in L^2(T^\ast X)$, we have $\langle \omega,\omega\rangle=|\omega|^2$ and $|\langle\omega,\eta\rangle|\leq |\omega||\eta|$ $\mf$-a.e.\ in $X$.
\item \emph{Differential}. We define the differential ${\rm d}\colon W^{1,2}(X)\to L^2(T^\ast X)$ by ${\rm d} f=[(f,X)]$ for each $f\in W^{1,2}(X)$. Clearly, we have that $|D f|=|{\rm d} f|\in L^2(X)$ for each $f\in W^{1,2}(X)$.
Moreover, if $(X,d,\mf)$ is infinitesimally Hilbertian, then we have $\langle D f,D g\rangle=\langle {\rm d} f , {\rm d} g\rangle\in L^1(X)$ for each $f,g\in W^{1,2}(X)$.
\item \emph{Tangent module}. We define the tangent module $L^2(T X)$ by
\[
L^2(T X):=\left\{V\colon \begin{array}{@{}l@{}}\text{$V\colon L^2(T^\ast X)\to L^1(X)$ is a bounded linear operator such that}\\
\text{$V(f\cdot \omega)=f V(\omega)$ holds for all $f\in L^\infty(X)$ and $\omega\in L^2(T^\ast X)$}
\end{array} \right\} .
\]
If $(X,d,\mf)$ is infinitesimally Hilbertian, then the map
\[
L^2(T^\ast X)\to L^2(T X),\qquad\omega\mapsto \langle\omega,\cdot\rangle
\]
is bijective (see \cite[Theorem~1.2.24]{Gig}).
Under this identification, $L^2(T X)$ is equipped with the structure of an $L^2$-normed $L^\infty(M)$ module and a pointwise scalar product $\langle\cdot,\cdot\rangle\colon L^2(T X)\times L^2(T X)\to L^1(X)$.
Note that even if $(X,d,\mf)$ is not infinitesimally Hilbertian, the tangent module $L^2(T X)$ is naturally equipped with the structure of an~$L^2$-normed $L^\infty(M)$ module (see \cite[Definition~1.2.6 and~Proposition~1.2.14]{Gig}).
\item \emph{Gradient}. If $(X,d,\mf)$ is infinitesimally Hilbertian, we define a map $\nabla\colon W^{1,2}(X)\to L^2(T X)$ by $\nabla f=\langle {\rm d} f,\cdot\rangle$ for each $f\in W^{1,2}(X)$. Clearly, we have that $|\nabla f|=|D f|=|{\rm d} f|\in L^2(X)$ and $\langle\nabla f,\nabla g\rangle=\langle D f, D g\rangle\in L^1(X)$ for each $f,g \in W^{1,2}(X)$.
\item \emph{Divergence}. If $(X,d,\mf)$ is infinitesimally Hilbertian, then we define
\[
\D(\Div):=\left\{V\in L^{2}(T X)\colon \begin{array}{@{}l@{}}\text{there exists $\Div V \in L^2(X)$ such that we have}\\
\text{$\int_X \langle V, \nabla g\rangle\, {\rm d}\mf=-\int_X g\, \Div V \,{\rm d}\mf$ for any $g\in W^{1,2}(X)$}\end{array}  \right\} .
\]
For each $V\in\D(\Div)$, $\Div V\in L^2(X)$ is uniquely determined.
\item \emph{Symmetric part of the covariant derivative} \cite[Definition~5.4]{AT2}. Let $(X,d,\mf)$ be infinitesimally Hilbertian.
For a vector field $V\in \D(\Div)$ we write $D^{\sym} V\in L^2(X)$ if there exists $c>0$ such that we have
\begin{gather}\label{symd}
\left|\int_X \langle V,\nabla f\rangle \Delta g+ \langle V,\nabla g\rangle\Delta f -\langle\nabla f,\nabla g\rangle\Div V\,{\rm d}\mf\right|\leq c \|\nabla f\|_{L^4}\|\nabla g\|_{L^4}
\end{gather}
for any $f,g\in \D(\Delta)$ with $|\nabla f|,|\nabla g|\in L^4(X)$ and $\Delta f,\Delta g\in L^4(X)$.
\end{itemize}

\end{Def}
\subsection[The RCD* condition and some properties]{The RCD$^{\boldsymbol *}$ condition and some properties}
In this subsection we recall the definition of the $\RCD(K,N)$ space and its properties.
\begin{Def}
We say that an infinitesimally Hilbertian metric measure space $(X,d,\mf)$ satisfies the Bakry--\'{E}mery condition $\BE(K,N)$ with $K\in\R$ and $N\in [1,\infty)$ if for all $u\in\D(\Delta)$ with $\Delta u\in W^{1,2}(X)$ and all $\phi\in\D(\Delta)\cap L^\infty(X)$ with $\phi\geq0$ and $\Delta \phi\in L^\infty(X)$, we have
\begin{gather*}
\frac{1}{2}\int_{X}\Delta \phi|\nabla u|^2 \,{\rm d}\mf\geq \int_{X} \phi\bigg(\langle\nabla \Delta u,\nabla u\rangle+K|\nabla u|^2+\frac{1}{N}(\Delta u)^2\bigg) \,{\rm d}\mf.
\end{gather*}
\end{Def}

\begin{Def}
We say that an infinitesimally Hilbertian metric measure space $(X,d,\mf)$ satis\-fies the Bakry--Ledoux condition $\BL(K,N)$ with $K\in\R$ and $N\in [1,\infty)$ if for all $u\in W^{1,2}(X)$ and $t>0$ we have
\[
|\nabla P_t f|^2+\frac{2t C(t)}{N}|\Delta P_t f|^2\leq {\rm e}^{-2Kt}P_t\big(|\nabla f|^2\big)
\]
$\mf$-a.e.\ in X, where $C(t)>0$ is a function satisfying $C(t)=1+O(t)$ as $t\to 0$.
\end{Def}
\begin{Thm}[{\cite[Theorem~4.8 and Proposition~4.9]{EKS}}]
An infinitesimally Hilbertian metric measure space $(X,d,m)$ satisfies the $\BE(K,N)$ condition if and only if it satisfies the $\BL(K,N)$ condition.
\end{Thm}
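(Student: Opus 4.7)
The plan is to follow the Bakry-type semigroup interpolation argument, adapted to the infinitesimally Hilbertian metric measure space setting: the forward direction $\BE \Rightarrow \BL$ is obtained by integrating a $\Gamma_2$-type bound along the heat flow, and the reverse direction $\BL \Rightarrow \BE$ by differentiating at $t=0$.

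For the forward direction, I fix $t > 0$ and $f \in W^{1,2}(X)$; after a density reduction I may assume $f$ lies in a smaller, well-regularized class such as $\TestF(X)$. For $r \in [0,t]$ set $u_r := P_{t-r}f$ and
\[
F(r) := P_r\bigl(|\nabla u_r|^2\bigr),
\]
so $F(0) = |\nabla P_t f|^2$ and $F(t) = P_t(|\nabla f|^2)$. A formal differentiation yields
\[
F'(r) = P_r\bigl(\Delta|\nabla u_r|^2 - 2\langle \nabla u_r, \nabla \Delta u_r\rangle\bigr),
\]
whose integrand is, up to a factor $2$, the iterated carré du champ of $u_r$. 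Testing $\BE(K,N)$ at $u = u_r$ against the nonnegative test $\phi = P_r\psi$ and using the self-adjointness $\int P_r\psi\, h\, {\rm d}\mf = \int\psi\, P_r h\, {\rm d}\mf$ promotes this to
\[
F'(r) \geq 2KF(r) + \frac{2}{N}P_r\bigl((\Delta u_r)^2\bigr)
\]
in the weak (duality) sense. Jensen's inequality for the Markov semigroup ($P_r$ positive, contractive, with $P_r 1 = 1$) gives $P_r((\Delta u_r)^2) \geq (P_r \Delta u_r)^2 = (\Delta P_t f)^2$. The resulting Gronwall-type inequality $(e^{-2Kr}F(r))' \geq \frac{2}{N}e^{-2Kr}(\Delta P_t f)^2$, integrated over $[0,t]$, produces
\[
e^{-2Kt}P_t\bigl(|\nabla f|^2\bigr) - |\nabla P_t f|^2 \geq \frac{1 - e^{-2Kt}}{KN}(\Delta P_t f)^2,
\]
which is $\BL(K,N)$ with $C(t) = (1-e^{-2Kt})/(2Kt) = 1 + O(t)$.

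For the reverse direction, take $u \in \D(\Delta)$ with $\Delta u \in W^{1,2}(X)$ and $\phi \in \D(\Delta)\cap L^\infty(X)$ with $\phi\geq 0$ and $\Delta\phi \in L^\infty(X)$. I integrate the pointwise $\BL(K,N)$ inequality (applied with $f = u$) against $\phi$, use self-adjointness of $P_t$ to move $P_t$ off $|\nabla u|^2$ and onto $\phi$, subtract the trivial $t=0$ identity, divide by $t$, and let $t \to 0^+$. The $W^{1,2}$-differentiability of $t \mapsto P_t u$ at $t=0$ (ensured by $\Delta u \in W^{1,2}$) yields
\[
\frac{\rm d}{{\rm d}t}\bigg|_{t=0}\int_X \phi|\nabla P_t u|^2\, {\rm d}\mf = 2\int_X \phi\langle\nabla u,\nabla\Delta u\rangle\, {\rm d}\mf;
\]
using $C(0)=1$, the $\frac{2tC(t)}{N}|\Delta P_t u|^2$ term contributes $\frac{2}{N}\int_X \phi(\Delta u)^2\, {\rm d}\mf$, while the right-hand side contributes $\int_X \Delta\phi|\nabla u|^2\, {\rm d}\mf - 2K\int_X \phi|\nabla u|^2\, {\rm d}\mf$. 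Rearranging recovers $\BE(K,N)$.

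The main obstacle is the rigorous justification of the identity for $F'(r)$ and the passage from the weak $\BE$ bound to the pointwise Gronwall inequality: one needs $|\nabla u_r|^2$ to lie in a class where $\Delta$ acts weakly, enough regularity of $\Delta u_r$ for $\BE$ to apply, and a handling of the measurability and $L^2$-convergence of the $r$-derivative of $F$. These are addressed via the $L^\infty$-gradient regularization of the heat flow (bootstrapping into $\TestF(X)$) and a density argument to remove the extra regularity on $f$. The reverse direction is less delicate, since the required regularity at $t=0$ is directly ensured by the hypotheses of $\BE(K,N)$.
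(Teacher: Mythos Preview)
The paper does not give its own proof of this theorem: it is simply quoted from \cite[Theorem~4.8 and Proposition~4.9]{EKS}, so there is no in-paper argument to compare against directly. Your proposal reproduces exactly the standard EKS argument, and in fact the forward direction you outline is the same semigroup-interpolation scheme the paper itself invokes later (see the proof of Claim~\ref{blmod} inside Proposition~\ref{blN-n}, where the function $h(s)=e^{-2Ks}\int P_s\phi\,|\nabla P_{t-s}u|^2\,{\rm d}\mf$ is differentiated, the Bochner-type inequality is applied, Jensen is used for $(\Delta P_t u)^2\le P_s((\Delta P_{t-s}u)^2)$, and one integrates in $s$). The only cosmetic difference is that the paper works throughout with the integrated quantity $h(s)$ against a fixed test $\phi$, whereas you phrase things via the pointwise object $F(r)=P_r(|\nabla u_r|^2)$ and pass to weak duality afterwards; both lead to the same Gronwall step and the same constant $C(t)=(1-e^{-2Kt})/(2Kt)$. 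Your treatment of the reverse direction by differentiating $\BL$ at $t=0$ is likewise the standard one from \cite{EKS}. In short, your approach is correct and coincides with the cited proof.
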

Let us recall an equivalent version of the definition of the $\RCD(K,N)$ condition (see \cite[Theorem~7 and~Definition~3.16]{EKS} for the equivalence).
If the total mass is finite, the $\RCD(K,N)$ condition is equivalent to the $\mathrm{RCD}(K,N)$ condition by \cite[Corollary~13.7]{CM}.

\begin{Def}\label{rcd}
We say that a compact infinitesimally Hilbertian metric measure space $(X,d,\mf)$ satisfies the $\RCD(K,N)$ condition with $K\in\R$ and $N\in [1,\infty)$ if $(X,d,\mf)$ satisfies the $\BE(K,N)$ condition and the Sobolev-to-Lipschitz property.
\end{Def}
For more general metric measure space, we add the volume growth assumption to Definition~\ref{rcd}. However, it is automatically satisfied in our situation because we assume that $\mf(X)=1$.
Note that Definition~\ref{rcd} implies that $(X,d)$ is a geodesic space by \cite[Theorems~3.9 and~3.10]{AGS3} and \cite[Theorem~2.5.23]{BBI}.
The original definition also implies this property (see \cite[Remark 3.8]{EKS} and \cite[Remark~4.6]{Stu1}).

The definition of the $\RCD(K,N)$ condition is consistent to the smooth case.
\begin{Prop}[{\cite[Proposition~4.21]{EKS}}]
For any $n$-dimensional closed Riemannian mani\-fold $(M,g)$ and real numbers $K\in \R$ and $N\in[1,\infty)$, we have that $(M,d_g, \nHa^n)$ satisfies the $\RCD(K,N)$ condition if and only if $\Ric\geq K g $ and $n\leq N$ hold.
\end{Prop}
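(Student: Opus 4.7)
The plan is to exploit the standard identifications between the metric--measure machinery on a smooth Riemannian manifold and classical Riemannian calculus, and to reduce the statement to the equivalence between the Bakry--\'Emery condition $\BE(K,N)$ and the pair of conditions $\Ric\geq Kg$, $n\leq N$. I first record that on $(M,d_g,\nHa^n)$ the Cheeger energy coincides with the Dirichlet energy $\tfrac{1}{2}\int_M|\nabla f|^2\,{\rm d}\mu_g$ on the classical Sobolev space $W^{1,2}(M)$; in particular $\Ch$ is quadratic, so $(M,d_g,\nHa^n)$ is infinitesimally Hilbertian, and the metric Laplacian agrees with the Laplace--Beltrami operator on smooth functions. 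The Sobolev--to--Lipschitz property is classical on a smooth closed manifold: if $|D f|\leq 1$ almost everywhere, a mollification argument combined with the fact that $d_g$ is the intrinsic length distance of $g$ produces a $1$--Lipschitz representative. Both ingredients of Definition~\ref{rcd} other than $\BE(K,N)$ are therefore automatic, and the claim reduces to the equivalence of $\BE(K,N)$ with ``$\Ric\geq Kg$ and $n\leq N$''.

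For the ``if'' direction I invoke the Bochner formula
\[
\tfrac{1}{2}\Delta|\nabla u|^2=|\Hess u|^2+\langle\nabla\Delta u,\nabla u\rangle+\Ric(\nabla u,\nabla u),
\]
combined with the algebraic bound $|\Hess u|^2\geq (\Delta u)^2/n\geq (\Delta u)^2/N$ and the assumption $\Ric\geq Kg$, to obtain the pointwise inequality
\[
\tfrac{1}{2}\Delta|\nabla u|^2\geq \langle\nabla\Delta u,\nabla u\rangle+K|\nabla u|^2+\tfrac{1}{N}(\Delta u)^2.
\]
Multiplying by a nonnegative test $\phi$ and shifting one Laplacian from $|\nabla u|^2$ onto $\phi$ via the divergence theorem yields $\BE(K,N)$ for smooth $u,\phi$, and the admissible class in the definition of $\BE(K,N)$ is reached by a density and truncation argument on the closed manifold~$M$.

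Conversely, starting from $\BE(K,N)$, I recover the pointwise Bochner inequality by localization: at a point $x_0\in M$ and for any $v\in T_{x_0}M$ and symmetric endomorphism $A$ of $T_{x_0}M$, I choose smooth $u$ with $\nabla u(x_0)=v$ and $\Hess u(x_0)=A$, and a family of smooth nonnegative $\phi_\varepsilon$ concentrating near $x_0$. Passing to the limit in the integral form of $\BE(K,N)$ and reading back the Bochner identity gives
\[
|A|^2+\Ric(v,v)\geq K|v|^2+\tfrac{1}{N}(\tr A)^2
\]
at $x_0$. Taking $A=0$ and varying $v$ forces $\Ric\geq Kg$; taking $v=0$ and $A=\Id$ forces $n\geq n^2/N$, i.e., $n\leq N$.

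The main technical obstacle is the interface between the abstract definition of $\BE(K,N)$, formulated with global admissible pairs in $\D(\Delta)$, and the pointwise Riemannian Bochner identity. Concretely, one must verify that smooth compactly supported bump functions and quadratic approximations lie in the admissible class, and that the integral inequality is strong enough to control the integrand pointwise as $\phi_\varepsilon$ concentrates. This is routine on a smooth closed manifold because $C^\infty(M)\subset \D(\Delta)\cap L^\infty(M)$ with $\Delta u\in C^\infty(M)\subset W^{1,2}(M)\cap L^\infty(M)$, and because the prescribed $1$-- and $2$--jets of smooth functions at any point can be chosen independently; however, it is the single step where one must carefully translate between the axiomatic condition and pointwise Riemannian geometry.
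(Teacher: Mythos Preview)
The paper does not supply its own proof of this proposition: it is quoted verbatim as \cite[Proposition~4.21]{EKS} and used as a black box. Your outline is a correct sketch of the standard argument, and it is essentially the one underlying the cited result: identify the metric Sobolev calculus on $(M,d_g,\nHa^n)$ with the smooth one (quadratic Cheeger energy, Sobolev-to-Lipschitz), then show that $\BE(K,N)$ is equivalent to $\Ric\ge Kg$ and $n\le N$ via the Bochner formula together with a localization by concentrating test functions. The jet-prescription step (choosing $u$ with $\nabla u(x_0)=v$, $\Hess u(x_0)=A$ and letting $\phi_\varepsilon$ concentrate) and the two extremal choices $A=0$, respectively $v=0$ and $A=\Id$, are exactly the right moves. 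There is nothing to compare against in the present paper beyond the citation.
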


Let us compare the local Lipschitz constant and the minimal relaxed gradient for Lipschitz functions.
If the $\RCD(K,N)$ condition holds, then we have the doubling condition \cite[Corollary~2.4]{Stu} and the weak Poincar\'{e} inequality \cite[Theorem~1.1]{Raj}.
Moreover, our minimal relaxed gradient coincides with Cheeger's minimal generalized upper gradient \cite[Definition~2.9]{Ch0} by \cite[Theorem~6.2]{AGS1}.
Thus, we have the following theorem by \cite[Theorem~6.1]{Ch0}:
\begin{Thm}
Let $(X,d,\mf)$ be a compact metric measure space satisfying the $\RCD(K,N)$ condition.
Then, for any $f\in \LIP(X)$, we have $\Lip f=|\nabla f|$ $\mf$-a.e.\ in $X$.
\end{Thm}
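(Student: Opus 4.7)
The plan is to chain together three classical ingredients, exactly in the order suggested by the paragraph preceding the theorem. Since the statement is essentially a translation of Cheeger's identification theorem into the language of relaxed gradients, the proof is not a new argument but a careful bookkeeping of equivalences.

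First, I would verify that the metric measure space $(X,d,\mf)$ falls into the Cheeger framework. The $\RCD(K,N)$ hypothesis, together with $\mf(X)=1$ and $\supp\mf=X$, yields a (local) doubling condition on $\mf$ by Sturm's generalized Bishop--Gromov inequality (\cite{Stu}, Corollary 2.4), and a weak $(1,2)$-Poincar\'e inequality by Rajala's theorem (\cite{Raj}, Theorem 1.1). These two properties are exactly the hypotheses required in \cite[Section~5]{Ch0}.

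Second, I would invoke Cheeger's main identification theorem \cite[Theorem~6.1]{Ch0}, which asserts that under the doubling and weak Poincar\'e assumptions, for every Lipschitz function $f\in\LIP(X)$ the pointwise local Lipschitz constant $\Lip f$ coincides $\mf$-almost everywhere with Cheeger's minimal generalized upper gradient $g_f$. This is the only substantive analytic input in the proof; conceptually it is the step that relies on the interplay between doubling and Poincar\'e via Cheeger's differentiability machinery, and it is the step one should regard as the main obstacle, though of course here it is imported wholesale.

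Finally, I would translate from Cheeger's minimal generalized upper gradient to the minimal relaxed gradient $|D f|$ used throughout this paper. This identification is provided by Ambrosio--Gigli--Savar\'e \cite[Theorem~6.2]{AGS1}: for any $f\in W^{1,2}(X)$, the minimal relaxed gradient $|Df|$ coincides $\mf$-a.e.\ with the minimal generalized upper gradient in Cheeger's sense. Combining this with the previous step gives $\Lip f = |Df|$ $\mf$-a.e., and since the infinitesimal Hilbertianity implicit in $\RCD(K,N)$ gives $|Df|=|\nabla f|$ $\mf$-a.e.\ by the discussion of the differential and gradient maps in the Preliminaries, we obtain $\Lip f=|\nabla f|$ $\mf$-a.e.\ as claimed. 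No further estimates are needed; the result is a direct chain of citations once the doubling and Poincar\'e properties have been recorded.
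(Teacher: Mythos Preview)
Your proposal is correct and matches the paper's own justification essentially word for word: the paper derives the theorem by noting that $\RCD(K,N)$ gives doubling \cite[Corollary~2.4]{Stu} and a weak Poincar\'e inequality \cite[Theorem~1.1]{Raj}, that the minimal relaxed gradient coincides with Cheeger's minimal generalized upper gradient by \cite[Theorem~6.2]{AGS1}, and then applies \cite[Theorem~6.1]{Ch0}. No further argument is given in the paper, so your chain of citations is exactly the intended proof.
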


Under the $\RCD(K,N)$ condition, we have that
\[
\TestF(X)=\left\{f\in\D(\Delta)\cap \LIP(X)\colon \Delta f\in W^{1,2}(X)\right\} .
\]

Let us make a remark on the heat kernel.
Let $(X,d,\mf)$ be a compact metric measure space satisfying the $\RCD(K,N)$ condition.
Then, $\mathcal{E}$ is a strongly local Dirichlet form on $(X,\mf)$ by \cite[Proposition~4.8]{AGS1} and \cite[Proposition~4.11]{AGS2}, and we have
\[
d(x,y)=\sup\{|f(x)-f(y)|\colon\text{$f\in \LIP(X)$ with $|\nabla f|\leq 1$ $\mf$-a.e.\ in $X$}\}
\]
by \cite[Theorem~3.9]{AGS3}.
Since we have the doubling condition by \cite[Corollary~2.4]{Stu} and the strong local $(2,2)$ Poincar\'{e} inequality \cite[Property (Ic)]{Stu3} by \cite[Theorem~1.1]{Raj} and \cite[Theorem~1]{HK}, %(note that $(X,d)$ satisfies the hypothesis of \cite[Lemma 1]{HK} by \cite[Remark 4.6]{Stu1} and \cite[Corollary~2.4]{Stu}),
we can apply \cite[Proposition~2.3]{Stu2} and \cite[Proposition~3.1]{Stu3} (see also \cite[Theorem~3.5]{Stu3}), and so there exists a locally H\"{o}lder continuous function $p\colon (0,\infty)\times X\times X\to\R$ such that
\[
P_t f (x)=\int_X p(t,x,y) f(y)\,{\rm d}\mf(y)
\]
holds for any $f\in L^1(X)$.
By \cite[Theorem~1.2]{JLZ}, for any $\epsilon>0$, there exist constants $C_i=C_i(\epsilon,K,N)>1$ such that
\[
\frac{C_1^{-1}}{\mf(B_{\sqrt{t}}(x))}\exp\bigg(\!{-}\frac{d^2(x,y)}{(4-\epsilon)t}-C_2 t\bigg)\leq
p(t,x,y)\leq \frac{C_1}{\mf(B_{\sqrt{t}}(x))}\exp\bigg(\!{-}\frac{d^2(x,y)}{(4+\epsilon)t}+C_2 t\bigg)
\]
holds for each $x,y\in X$ and $t>0$.
Here, we defined $B_r(x):=\{z\in X\colon d(x,z)<r\}$ for $x\in X$ and $r>0$.
By this and the Bishop--Gromov inequality \cite[Theorem~2.3]{Stu},
we have the following:
\begin{itemize}\itemsep=0pt
\item For any $f\in L^1(X)$, we have
\begin{gather}\label{linfl1}
\|P_t f\|_{L^\infty}\leq C(t)\|f\|_{L^1}.
\end{gather}
%Note that $C(t)>0$ also depends on $(X,d,\mf)$.
\item For any $f\in C(X)$, the function
\[
[0,\infty)\times X\to \R,\qquad (t,x)\mapsto (P_t f)(x)
\]
is continuous. Here, we defined $P_0 f:=f$.
\end{itemize}
For any $f\in W^{1,2}(X)$ and $t>0$, we have $P_t f\in \LIP(X)$ by (\ref{linfl1}) for $|\nabla f|^2\in L^1(X)$, the $\BL(K,N)$ condition and the Sobolev-to-Lipschitz property, and so $P_t f\in \TestF(X)$ by $\Delta P_t f=P_{t/2}\Delta P_{t/2} f\in W^{1,2}(X)$.
In particular, $\TestF(X)\subset W^{1,2}(X)$ is dense.
As a corollary, we have $P_t f=P_{t/2}P_{t/2}f\in \TestF(X)$ for any $f\in L^2(X)$ and $t>0$.
Note that since we assumed that $(X,d,\mf)$ is compact and $\mf(X)=1$, we can skip the truncation procedure.

{\samepage
We next recall some basic facts about the spectrum of $-\Delta$ on a compact metric measure space $(X,d,\mf)$ satisfying the $\RCD(K,N)$ condition.
By \cite[Theorem~1.1]{Raj}, \cite[Theorem~1]{HK} and \cite[Theorem~8.1]{HK2}, the inclusion $W^{1,2}(X)\to L^2(X)$ is a compact operator.
Thus, the spectrum of~$-\Delta$ is discrete and positive:
\[
0=\lambda_0<\lambda_1\leq \lambda_2\leq\cdots\to \infty
\]
as the smooth case.
See also the proof of \cite[Theorem~4.22]{EKS}.
Let $\{\phi_i\}_{i=0}^\infty$ be the corresponding eigenfunctions.
Then,
\[
\bigoplus_{i=0}^\infty \R\phi_i=\bigg\{\sum_{i=0}^k a_i \phi_i\colon k\in\Z_{\geq 0}\text{ and }a_i\in \R\ (i=0,\ldots,k)\bigg\}
\]
is dense in $L^2(X)$.

}

Finally, let us recall the notion of the regular Lagrangian flow, which is a flow for a vector field in a non-smooth setting.
Although time dependent vector fields are considered in \cite{AT1}, we only deal with time independent vector fields because it is enough for our purpose.
\begin{Def}[\cite{AT1}]
Let $(X,d,\mf)$ be a compact measure space satisfying the $\RCD(K,N)$ condition and take $T>0$.
We say that $\Fl^V \colon [0,T]\times X\to X$ is a regular Lagrangian flow for a~vector field $V\in L^2(T X)$ if
the following properties hold:
\begin{itemize}\itemsep=0pt
\item[(i)] There exists a constant $C>0$ such that
\[
\big(\Fl^V_s\big)_\ast \mf\leq C \mf
\]
holds for any $s\in [0,T]$.
\item[(ii)] For each $x\in X$, the curve
\[
[0,T]\to X,\qquad
s\mapsto \Fl_s^V(x)
\]
is continuous and $\Fl_0^V=\Id_X$.
Moreover, the map $X\to C([0,T];X),\, x\mapsto \Fl^V(\cdot,x)$ is Borel measurable.
Here, $C([0,T];X)$ is equipped with the topology induced by the uniformly convergence.
\item[(iii)] For any Lipschitz function $f$ on $X$,
we have that
$f\big(\Fl^V(\cdot ,x)\big)\in W^{1,1}(0,T)$ for $\mf$-a.e.\ $x\in X$ and
\[
\frac{\rm d}{{\rm d} t}f\big((\Fl^V(t,x)\big)={\rm d} f(V) \big((\Fl^V(t,x)\big)
\]
for $\mathcal{L}^1\times\mf $-a.e.\ $(t,x)\in (0,T)\times X$.
\end{itemize}
\end{Def}
Note that the Borel measurability in (ii) can be verified under the assumption of the following existence theorem because we construct the flow using the disintegration theorem in \cite[Theorem~8.4]{AT1} and \cite[Theorem~7.8]{AT2}.

We use the following form of the result of \cite{AT1}. See also \cite{AT2}.
\begin{Thm}[Ambrosio--Trevisan \cite{AT1}]\label{RLF}
Let $(X,d,\mf)$ be a metric measure space satisfying the $\RCD(K,N)$ condition and take $T>0$.
For any vector field $V\in\mathcal{D}(\Div)$ with $D^{\sym}V\in L^2(X)$ and $(\Div V)^{-}\in L^\infty (X)$ $((\Div V)^{-}$ denotes the negative part of the divergence $\Div V)$, a regular Lagrangian flow $\Fl^V \colon [0,T]\times X\to X$ exists and unique, in the sense that if $\widetilde{\Fl}^V$ is another flow, then for $\mf$-a.e.\ $x\in X$ we have that $\Fl^V(s,x)=\widetilde{\Fl}^V(s,x)$ for every $s\in[0,T]$.
Moreover, we have that
\begin{gather}\label{vol}
\big(\Fl^V_s\big)_\ast\mf\leq \exp(T\|(\Div V)^-\|_{L^\infty})\mf
\end{gather}
for all $s\in [0,T]$.
\end{Thm}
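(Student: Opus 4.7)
My proof proposal for Theorem~\ref{RLF} would follow the continuity equation strategy of Ambrosio--Trevisan, reducing the construction of the flow to well-posedness of the continuity equation $\partial_t\mu_t + \Div(V\mu_t)=0$ in the class of measures with uniformly bounded density with respect to $\mf$. The overall architecture has three pillars: (a) existence of a weak solution $\mu_t$ starting from $\mf$, (b) a commutator estimate based on the symmetric covariant derivative that yields uniqueness of such $\mu_t$, and (c) a superposition principle that lifts $\mu_t$ to a probability measure on paths, from which a deterministic flow is extracted.

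For existence of the flow, the plan is to approximate $V$ by regularized vector fields $V^\epsilon$ obtained via heat flow smoothing of coordinate components; these $V^\epsilon$ inherit bounds on $D^{\sym}V^\epsilon$ and $(\Div V^\epsilon)^-$ from the corresponding bounds on $V$. For each $\epsilon$ one can solve the continuity equation directly using the linear semigroup generated by the formal dual of $V^\epsilon\cdot\nabla$ acting on $L^2(X)$, and obtain an approximate flow together with uniform bounds
\[
\bigl(\Fl^{V^\epsilon}_s\bigr)_\ast\mf \leq \exp\bigl(T\|(\Div V^\epsilon)^-\|_{L^\infty}\bigr)\mf \leq \exp\bigl(T\|(\Div V)^-\|_{L^\infty}\bigr)\mf
\]
by a Gronwall argument applied to the density of $(\Fl^{V^\epsilon}_s)_\ast\mf$. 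Tightness on the path space $C([0,T];X)$ then gives a limit probability measure $\boldsymbol{\pi}$ on paths which is concentrated on solutions of the ODE $\dot\gamma = V(\gamma)$ in the weak sense of (iii), and whose time-$s$ marginal satisfies the density bound~\eqref{vol}. Passing to the limit in the ODE requires the generalized chain rule for $f\in\LIP(X)$ along absolutely continuous curves, which holds in $\RCD(K,N)$ spaces.

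Uniqueness is the heart of the theorem and the step I expect to be hardest. The key is to show that any two solutions $\mu_t^1,\mu_t^2$ of the continuity equation with density $\rho_t^i\in L^\infty(X)$ coincide. Setting $\rho_t = \rho_t^1-\rho_t^2$ and testing the equation against $P_\epsilon \phi$ for $\phi\in\TestF(X)$, one differentiates $\int (P_\epsilon\rho_t)^2\,{\rm d}\mf$ in time and obtains, after an integration by parts, an expression involving the commutator $[V\cdot\nabla,P_\epsilon]$. The bound on this commutator in terms of $\|\nabla P_\epsilon\rho_t\|_{L^4}^2$ and $\|D^{\sym}V\|_{L^2}$ is precisely what the regularity assumption~\eqref{symd} is designed to provide, and it is here that $D^{\sym}V\in L^2$ and $\Div V\in L^2$ (together with $(\Div V)^-\in L^\infty$) come together in a Gronwall inequality that forces $\rho_t\equiv 0$. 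Given uniqueness of the continuity equation in $L^\infty$ densities, the superposition principle (already available in $\RCD(K,N)$ via the non-branching/disintegration machinery) implies that the path measure $\boldsymbol{\pi}$ constructed above is concentrated on a single curve through $\mf$-a.e.\ initial point, so the flow is deterministic and well-defined.

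Finally, the continuity of $s\mapsto\Fl^V(s,x)$ and the Borel measurability of $x\mapsto\Fl^V(\cdot,x)$ come for free from the path-measure construction via the disintegration theorem with respect to the evaluation at $t=0$; the bound~\eqref{vol} transfers from the approximate flows to the limit by weak-$\ast$ lower semicontinuity of the density. I expect the commutator estimate to be the delicate step: the naive computation requires third-order information on $V$ that is not available, and the essential trick of Ambrosio--Trevisan is to interpret the mixed derivative arising from $[V\cdot\nabla,P_\epsilon]$ in terms of $D^{\sym}V$ plus a divergence term, which integrates away against symmetric test functions built from $P_\epsilon$.
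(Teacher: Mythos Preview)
Your proposal is essentially correct and follows the Ambrosio--Trevisan architecture that the paper invokes: existence for the continuity equation, uniqueness via a commutator estimate controlled by $D^{\sym}V$, and superposition to extract the flow. The paper does not prove Theorem~\ref{RLF} itself but, in the remark immediately following, assembles it from \cite[Theorems~4.3, 4.6]{AT1} (existence of $L^\infty$ solutions to the continuity equation with the density bound), \cite[Theorem~5.4]{AT1} (uniqueness via the $L^4$--$\Gamma$ inequality with exponents $p=s=r=4$, $q=2$), and \cite[Theorem~8.3]{AT1}/\cite[Theorem~7.7]{AT2} (superposition and disintegration). Your identification of the commutator $[V\!\cdot\!\nabla,P_\epsilon]$ as the heart of uniqueness, and of $D^{\sym}V\in L^2$ as precisely the hypothesis that controls it, matches this exactly.

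One divergence is worth flagging. You propose to regularize the vector field itself (``heat flow smoothing of coordinate components'') and to build approximate flows $\Fl^{V^\epsilon}$ before passing to the limit on path space. In an abstract $\RCD$ space there are no coordinate components, and even after heat-flow regularization $V^\epsilon$ does not acquire enough regularity to make a classical flow construction any easier than for $V$. The route taken in \cite{AT1}, and summarized in the paper's remark, regularizes at the level of the \emph{solution} of the continuity equation rather than the vector field: one builds $u_t\in L^\infty$ directly (the density bound $\|u_t\|_{L^\infty}\le \exp(T\|(\Div V)^-\|_{L^\infty})\|\overline u\|_{L^\infty}$ comes from the weak-$\ast$ limit of approximated solutions), proves uniqueness by the commutator argument you describe, and only then invokes superposition once. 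This avoids ever constructing approximate flows. Apart from this ordering issue in the existence step, your outline is faithful to the cited proof.
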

\begin{Rem}
Let us give some comments about which assertions in \cite{AT1} correspond to Theorem~\ref{RLF}.
We set $\A:=\LIP(X)$.
The concept of the regular Lagrangian flow is closely related to the continuity equation
\[
\frac{\rm d}{{\rm d}t} u_t+\Div(u_t V)=0.
\]
For $\overline{u}\in L^\infty(X)$ with $\overline{u}\geq 0$, there exists a weakly continuous weak solution (in the duality with~$\A$) $u\in L^\infty([0,T]; L^\infty(X))$ of the continuity equation with initial condition $\overline{u}$ in the sense of \cite[Definition~4.2]{AT1} satisfying
\[
\|u_t\|_{L^\infty}\leq \|\overline{u}\|_{L^\infty}\exp(T\|(\Div V)^{-}\|_{L^\infty})
\]
and $u_t\geq 0$ for each $t\in [0,T]$ by \cite[Theorems~4.3 and~4.6]{AT1} (see also \cite[Theorem~6.1]{AT2}).
Note that \cite[Theorem~4.6]{AT1} deals with approximated solutions.
However, since we get the solution of the continuity equation as a weak* limit of them, we have the same estimate.
Moreover, since we have the $L^4-\Gamma$ inequality \cite[Definition~5.1 and~Corollary~6.3]{AT1}, the solution is unique by \cite[Theorem~5.4]{AT1} (see also \cite[Theorem~6.4]{AT2}) putting $p=s=r=4$ and $q=2$.
Thus, by \cite[Theorem~9.2]{AT1}, we can apply \cite[Theorem~7.7]{AT2} (see also \cite[Theorem~8.3]{AT1}), and so there exists a~unique regular Lagrangian flow for $V$.
Moreover, its proof shows the estimate (\ref{vol}).
\end{Rem}

\subsection{Gromov--Hausdorff convergence and functions}
In this subsection we recall some properties about the Gromov--Hausdorff convergence.
\begin{Def}[Hausdorff distance]\label{Dhau}
Let $(X,d)$ be a metric space.
For each point $x_0\in X$, subsets $A,B\subset X$ and $r>0$, define
\begin{gather*}
d(x_0,A):=\inf\{d(x_0,a)\colon a\in A\},
\\
B_{r}(A):=\{x\in X\colon d(x,A)<r\},
\\
d_{{\rm H},d}(A,B):=\inf\{\epsilon>0\colon A\subset B_{\epsilon}(B) \text{ and } B\subset B_{\epsilon}(A)\}.
\end{gather*}
We call $d_{{\rm H},d}$ the Hausdorff distance.
\end{Def}
The Hausdorff distance defines a metric on the collection of compact subsets of $X$.
\begin{Def}[Gromov--Hausdorff distance]\label{DGH}
Let $(X,d_X)$, $(Y,d_Y)$ be metric spaces.
Define
\[
d_{{\rm GH}}(X,Y):=\inf\left\{d_{{\rm H},d}(X,Y)\colon \begin{array}{@{}l@{}}\text{$d$ is a metric on $X\coprod Y$ such that}\\
\text{$d|_X=d_X$ and $d|_Y=d_Y$}
\end{array}\right\} .
\]
\end{Def}
The Gromov--Hausdorff distance defines a metric on the set of isometry classes of compact metric spaces (see \cite[Proposition~11.1.3]{Pe3}).

\begin{Def}[$\epsilon$-Hausdorff approximation map]\label{hap}
Let $(X,d_X)$, $(Y,d_Y)$ be metric spaces.
We~say that a map $\psi\colon X\to Y$ is an $\epsilon$-Hausdorff approximation map for $\epsilon>0$ if the following two conditions hold.
\begin{itemize}\itemsep=0pt
\item[(i)] For all $a,b\in X$, we have $|d_X(a,b)-d_Y(\psi(a),\psi(b))|< \epsilon$,
\item[(ii)] $\psi(X)$ is $\epsilon$-dense in $Y$, i.e., for all $y\in Y$, there exists $x\in X$ with $d_Y(\psi(x),y)< \epsilon$.
\end{itemize}
\end{Def}
If there exists an $\epsilon$-Hausdorff approximation map from $X$ to $Y$, then we can show that $d_{{\rm GH}}(X,Y)\leq 3\epsilon/2$.
Conversely, if $d_{{\rm GH}}(X,Y)< \epsilon$, then there exists a $2\epsilon$-Hausdorff approximation map from $X$ to $Y$.
\begin{Def}
Suppose that a sequence of $n$-dimensional closed Riemannian manifolds $\{(M_i,g_i)\}_{i\in \Z_{>0}}$ with $\Ric_i\geq K g_i$ and $\diam (M_i)\leq D$ ($K\in \R$, $D>0$) converges to a metric space $(X,d)$ in the Gromov--Hausdorff topology.
Fix a sequence of $\epsilon_i$-Hausdorff approximation maps $\psi_i\colon M_i\to X$, where $\{\epsilon_i\}$ is some sequence of positive real numbers with $\lim\limits_{i\to \infty}\epsilon_i=0$.
\begin{itemize}\itemsep=0pt
\item We say a sequence $\{x_i\}$ with $x_i\in M_i$ converges to $x\in X$ if $\lim\limits_{i\to \infty}\psi_i(x_i)= x$ in~$X$ $\big($denote it by $x_i\stackrel{{\rm GH}}{\to} x\big)$.
\item Let $\mf$ be a Borel measure on $X$.
We say that a sequence $\{(M_i,g_i,\nHa^n)\}$ converges to a~metric measure space $(X,d,\mf)$ in the measured Gromov--Hausdorff topology if
\[
\lim_{i\to \infty}\nHa^n(B_r(x_i))=\mf (B_r(x))
\]
holds for any $r>0$, $x_i\in M_i$ and $x\in X$ with $x_i \stackrel{{\rm GH}}{\to} x$.
Note that taking a subsequence, such a limit measure exists by \cite[Theorems~1.6 and~1.10]{CC1}.
Moreover, $(X,d,\mf)$ satisfies the $\RCD(K,N)$ condition by \cite[Theorem~3.22]{EKS}.
\end{itemize}
Suppose that a sequence $\{(M_i,g_i,\nHa^n)\}$ converges to $(X,d,\mf)$ in the measured Gromov--Haus\-dorff topology.
\begin{itemize}\itemsep=0pt
\item We say that $f_i\in L^2(M_i)$ ($i\in\Z_{>0}$) converges to $f\in L^2(X)$ strongly at $x\in X$ \cite[Defi\-nition~3.7]{Ho1} if we have that
\begin{gather*}
\lim_{r\to 0}\limsup_{i\to \infty}\bigg(\frac{1}{\nHa^n(B_r (x_i))}\int_{B_r(x_i)}\bigg|f_i-\frac{1}{\mf(B_r(x))}\int_{B_r(x)}f\,{\rm d}\mf\bigg|\,{\rm d}\nHa^n\bigg)=0,
\\
\lim_{r\to 0}\limsup_{i\to \infty}\bigg(\frac{1}{\mf(B_r (x))}\int_{B_r(x)}\bigg|f-\frac{1}{\nHa^n(B_r(x_i))}\int_{B_r(x_i)}f_i\,{\rm d}\nHa^n\bigg|\,{\rm d}\mf\bigg)=0
\end{gather*}
for all $x_i\in M_i$ with $x_i \stackrel{{\rm GH}}{\to} x$.
\item We say that $f_i\in L^2(M_i)$ ($i\in\Z_{>0}$) converges to $f\in L^2(X)$ weakly in $L^2$ ($L^2$ boundedness and weakly convergence \cite[Definition~3.4 and~Proposition~3.17]{Ho1}) if
\begin{gather*}
\sup_{i}\|f_i\|_{L^2}<\infty,
\end{gather*}
and for all $r>0$, $x_i\in M_i$ and $x\in X$ with $x_i \stackrel{{\rm GH}}{\to} x$, we have
\begin{gather*}
\lim_{i\to \infty}\int_{B_r(x_i)} f_i \,{\rm d}\nHa^n=\int_{B_r(x)} f \,{\rm d}\mf.
\end{gather*}
Note that we have $\|f\|_{L^2}\leq \liminf\limits_{i\to\infty}\|f_i\|_{L^2}$ by \cite[Proposition~3.29]{Ho1}.
\item We say that $f_i\in L^2(M_i)$ ($i\in\Z_{>0}$) converges to $f\in L^2(X)$ strongly in $L^2$ \cite[Definition~3.21 and Proposition~3.31]{Ho1} if $f_i$ converges to $f$ weakly in $L^2$, and
\begin{gather*}
\limsup_{i\to \infty} \|f_i\|_{L^2}\leq \|f\|_{L^2}
\end{gather*}
holds.
\item We say that $V_i\in L^2(T M_i)$ ($i\in\Z_{>0}$) converges to $V\in L^2(T X)$ weakly in $L^2$ \cite[Definition~3.42]{Ho1} if
\begin{gather*}
\sup_{i}\|V_i\|_{L^2}<\infty,
\end{gather*}
and for all $r>0$, $y_i,z_i\in X_i$ and $y,z\in X$ with $y_i\stackrel{{\rm GH}}{\to} y$, $z_i\stackrel{{\rm GH}}\to z$, we have
\begin{gather*}
\lim_{i\to \infty}\int_{B_r(y_i)} \langle V_i, \nabla r_{z_i}\rangle \,{\rm d}\nHa^n=\int_{B_r(y)} \langle V, \nabla r_{z}\rangle \,{\rm d}\mf,
\end{gather*}
where $r_z(x):=d(z,x)$ for each $x\in X$.
Note that we have $\|V\|_{L^2}\leq \liminf\limits_{i\to \infty}\|V_i\|_{L^2}$ by \cite[Proposition~3.64]{Ho1}.
\item We say that $V_i\in L^2(T M_i)$ ($i\in\Z_{>0}$) converges to $X\in L^2(T X)$ strongly in $L^2$ \cite[Definition~3.58 and~Proposition~3.66]{Ho1} if $V_i$ converges to $V$ weakly in $L^2$ and
\begin{gather*}
\limsup_{i\to \infty} \|V_i\|_2\leq \|V\|_2
\end{gather*}
holds.
\end{itemize}

\end{Def}
\begin{Prop}[{\cite[Proposition~3.32]{Ho1}}]\label{eqconv}
Suppose that a sequence of $n$-dimensional closed Riemannian manifolds $\{(M_i,g_i,\nHa^n)\}_{i\in \Z_{>0}}$ with $\Ric_i\geq K g_i$ and $\diam (M_i)\leq D$ $(K\in \R$, \mbox{$D>0)$} converges to a compact metric measure space $(X,d,\mf)$ in the measured Gromov--Hausdorff topo\-logy.
For any $f_i\in L^\infty(M_i)$ $(i\in\Z_{>0})$ with $\sup\|f_i\|_{L^\infty}<\infty$ and $f\in L^\infty (X)$, the following conditions are mutually equivalent:
\begin{itemize}\itemsep=0pt
\item[$(i)$] $f_i\to f$ strongly at a.e.\ $x\in X$.
\item[$(ii)$] $f_i\to f$ strongly in $L^2$.
\end{itemize}
\end{Prop}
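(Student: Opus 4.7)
The plan is to prove both implications directly from the definitions, using the uniform $L^\infty$ bound as the key source of equi-integrability throughout.

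For the direction $(ii)\Rightarrow(i)$, the idea is to upgrade strong $L^2$ convergence to an $L^2$ convergence on every ball for the shifted sequences $f_i-c$, and then descend to $L^1$ averages by Cauchy--Schwarz. More precisely, weak $L^2$ convergence already gives $\int_{B_r(y_i)} f_i\,{\rm d}\nHa^n\to\int_{B_r(y)} f\,{\rm d}\mf$, and combining this with $\|f_i\|_{L^2}\to\|f\|_{L^2}$ and a polarization argument one obtains
\[
\int_{B_r(y_i)}(f_i-c)^2\,{\rm d}\nHa^n\longrightarrow\int_{B_r(y)}(f-c)^2\,{\rm d}\mf
\]
for every constant $c\in\R$. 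Now fix a Lebesgue point $x\in X$ of $f$ (a.e.\ point is such by the Lebesgue differentiation theorem, which holds in the $\RCD$ setting thanks to doubling and Poincar\'e), set $c_r:=\bar f_r(x)$ and apply Cauchy--Schwarz:
\[
\frac{1}{\nHa^n(B_r(x_i))}\int_{B_r(x_i)}|f_i-c_r|\,{\rm d}\nHa^n\leq\bigg(\frac{1}{\nHa^n(B_r(x_i))}\int_{B_r(x_i)}(f_i-c_r)^2\,{\rm d}\nHa^n\bigg)^{1/2}.
\]
Taking $\limsup_i$ and then $r\to 0$ kills both sides by Lebesgue differentiation, yielding the first half of strong convergence at $x$. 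The symmetric half uses that $\bar f_{r,i}(x_i)\to \bar f_r(x)$ as $i\to\infty$ (by weak convergence on the single ball $B_r$) together with the triangle inequality $\int_{B_r(x)}|f-\bar f_{r,i}(x_i)|\leq \int_{B_r(x)}|f-\bar f_r(x)|+\mf(B_r(x))|\bar f_{r,i}(x_i)-\bar f_r(x)|$.

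For the direction $(i)\Rightarrow(ii)$, the plan is a Vitali-type covering argument. The uniform $L^\infty$ bound trivially gives $\sup_i\|f_i\|_{L^2}<\infty$. To check weak $L^2$ convergence, fix $r>0$ and $y_i\stackrel{{\rm GH}}{\to} y$. Given $\varepsilon>0$, use Vitali's covering lemma on $X$ (available since $\mf$ is doubling under $\RCD$) to find finitely many disjoint small balls $B_{s_k}(x_k)\subset B_r(y)$ with $x_k$ ranging over points where strong convergence at $x_k$ holds, and whose union exhausts $B_r(y)$ up to $\mf$-measure $\varepsilon$. Pick $x_{i,k}\stackrel{{\rm GH}}{\to} x_k$; strong convergence at $x_k$ gives
\[
\int_{B_{s_k}(x_{i,k})}f_i\,{\rm d}\nHa^n=\bar f_{s_k}(x_k)\nHa^n(B_{s_k}(x_{i,k}))+o(1)\longrightarrow \int_{B_{s_k}(x_k)}f\,{\rm d}\mf
\]
by the measured Gromov--Hausdorff convergence of volumes. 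Summing over $k$ and controlling the leftover mass on both sides by $\sup_i\|f_i\|_{L^\infty}\cdot\varepsilon$ yields $\int_{B_r(y_i)}f_i\,{\rm d}\nHa^n\to\int_{B_r(y)}f\,{\rm d}\mf$.

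It remains to obtain the norm inequality $\limsup_i\|f_i\|_{L^2}\leq\|f\|_{L^2}$. The cleanest route is to apply the same covering argument to $|f_i|^2$, which also converges strongly at a.e.\ point to $|f|^2$ because of the $L^\infty$ bound and the elementary identity $||a|^2-|b|^2|\leq(|a|+|b|)|a-b|$. Taking the ball of radius larger than $\diam(X)$ yields $\|f_i\|_{L^2}^2\to\|f\|_{L^2}^2$, which is stronger than the required inequality. The main technical obstacle throughout is the transfer of Vitali covers from $X$ to $M_i$: one must verify that the disjoint family and its complementary remainder behave well under the measured Gromov--Hausdorff approximation maps, which is precisely where the uniform $L^\infty$ bound buys the necessary slack.
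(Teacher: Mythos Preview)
The paper does not give its own proof of this proposition: it is quoted from \cite[Proposition~3.32]{Ho1}, with only the remark that the implication $(i)\Rightarrow(ii)$ is a direct consequence of \cite[Definitions~3.21 and~3.25, Proposition~3.24]{Ho1}. So there is no in-paper argument to compare against; your Vitali covering argument for $(i)\Rightarrow(ii)$ is a reasonable direct route and is in the spirit of Honda's framework.

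Your argument for $(ii)\Rightarrow(i)$, however, has a genuine gap at the ``polarization'' step. With the definition of strong $L^2$ convergence adopted in the paper (weak $L^2$ convergence together with $\limsup_i\|f_i\|_{L^2}\le\|f\|_{L^2}$), you obtain only the \emph{global} identity $\|f_i\|_{L^2}^2\to\|f\|_{L^2}^2$. Expanding the square on a ball gives
\[
\int_{B_r(y_i)}(f_i-c)^2\,{\rm d}\nHa^n=\int_{B_r(y_i)}f_i^2\,{\rm d}\nHa^n-2c\int_{B_r(y_i)}f_i\,{\rm d}\nHa^n+c^2\,\nHa^n(B_r(y_i)),
\]
and while weak convergence and measured GH convergence handle the last two terms, nothing you have assumed yields $\int_{B_r(y_i)}f_i^2\,{\rm d}\nHa^n\to\int_{B_r(y)}f^2\,{\rm d}\mf$. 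In Honda's setup this local statement (essentially that $f_i^2\to f^2$ weakly in $L^1$) is a nontrivial consequence of the equivalence between the two definitions of strong $L^2$ convergence \cite[Proposition~3.31]{Ho1}, and the standard way to establish that equivalence passes precisely through the pointwise notion you are trying to derive. As written, your $(ii)\Rightarrow(i)$ argument is therefore circular; to repair it you would need an independent proof that strong $L^2$ convergence (in the weak-plus-norm sense) forces convergence of $\int_{B_r}f_i^2$ on every ball, which is exactly the substantive content of \cite[Propositions~3.31--3.32]{Ho1}.
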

Note that the implication $(i)\Rightarrow (ii)$ is a direct consequence of \cite[Definitions~3.21 and~3.25, Proposition~3.24]{Ho1}.

\begin{Thm}[{\cite[Theorem~1.3]{Ho1}}]\label{convf}
Suppose that a sequence of $n$-dimensional closed Riemannian manifolds $\{(M_i,g_i,\nHa^n)\}_{i\in \Z_{>0}}$ with $\Ric_i\geq K g_i$ and $\diam (M_i)\leq D$ $(K\in \R$, $D>0)$ converges to a compact metric measure space $(X,d,\mf)$ in the measured Gromov--Hausdorff topo\-logy.
If $f_i\in L^2(M_i)\cap C^2(M_i)$ $(i\in\Z_{>0})$ converges to $f\in L^2(X)$ weakly in $L^2$, and satisfies
\[
\sup_{i\in\Z_{>0}} \left(\|f_i\|_{W^{1,2}}+\|\Delta f_i\|_{L^2}\right)<\infty,
\]
then we have the following:
\begin{itemize}\itemsep=0pt
\item[$(i)$] $f_i \to f$ and $\nabla f_i\to \nabla f$ strongly in $L^2$,
\item[$(ii)$] $f\in \D(\Delta_X)$ and $\Delta f_i\to \Delta f$ weakly in $L^2$.
\end{itemize}
\end{Thm}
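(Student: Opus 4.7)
The plan is to split the argument into three stages: strong $L^2$ convergence of $f_i$, identification of the weak $L^2$ limit of $\Delta f_i$ as the Laplacian on $X$, and strong $L^2$ convergence of the gradients.

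For the first stage, I would use heat flow regularisation. Writing $P^i_t$ for the heat flow on $M_i$, the Cheeger energy dissipation estimate gives the uniform bound
\[
\|f_i - P^i_t f_i\|_{L^2}^2 \leq 2t\,\Ch_i(f_i) \leq Ct,
\]
where $C := \sup_i \|f_i\|_{W^{1,2}}^2 < \infty$. Since $(M_i,g_i,\nHa^n)$ converges in the measured Gromov--Hausdorff sense to the $\RCD(K,n)$ space $(X,d,\mf)$, the Mosco convergence of the Cheeger energies implies that for each fixed $t>0$ we have $P^i_t f_i \to P_t f$ strongly in $L^2$. A diagonal argument, letting $t\to 0$ while combining the uniform estimate above with the convergence $P_t f \to f$ in $L^2$, then yields $f_i\to f$ strongly in $L^2$.

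For the second stage, the bound $\sup_i \|\Delta f_i\|_{L^2} < \infty$ gives, after passing to a subsequence, a weak $L^2$ limit $h\in L^2(X)$. To identify $h = \Delta f$, take $\phi \in \TestF(X)$ and lift $P_s\phi$ (for small $s>0$) to functions $\phi_i^s \in \TestF(M_i)$ with $\phi_i^s \to P_s\phi$ strongly in $W^{1,2}$ and $\Delta \phi_i^s \to \Delta P_s\phi$ strongly in $L^2$; such lifts exist by heat flow regularisation of suitable Lipschitz approximations combined with Mosco convergence. Integration by parts on $M_i$ gives
\[
\int_{M_i}\langle \nabla f_i, \nabla \phi_i^s\rangle\,{\rm d}\nHa^n = -\int_{M_i}\phi_i^s\,\Delta f_i\,{\rm d}\nHa^n.
\]
A subsequence of $\nabla f_i$ converges weakly in $L^2$ to some $V\in L^2(TX)$, and the strong $L^2$ convergence $f_i\to f$ combined with lower semicontinuity of the Cheeger energy and the Mosco framework identifies $V = \nabla f$ with $f\in W^{1,2}(X)$. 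Passing to the limit (weak--strong pairing on each side) and then sending $s\to 0$ yields
\[
\int_X \langle \nabla f,\nabla \phi\rangle\,{\rm d}\mf = -\int_X \phi\, h\,{\rm d}\mf
\]
for every $\phi\in \TestF(X)$. By density this extends to all $\phi\in W^{1,2}(X)$, so $f\in\D(\Delta_X)$ with $\Delta f = h$; uniqueness of the limit upgrades the subsequential convergence to full convergence $\Delta f_i \to \Delta f$ weakly in $L^2$.

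For the third stage, I would use the energy identity together with what has been proved. Integration by parts on $M_i$ gives $\|\nabla f_i\|_{L^2}^2 = -\int_{M_i} f_i\,\Delta f_i\,{\rm d}\nHa^n$, and by strong convergence $f_i\to f$ with weak convergence $\Delta f_i \to \Delta f$ in $L^2$, the right-hand side converges to $-\int_X f\,\Delta f\,{\rm d}\mf = \|\nabla f\|_{L^2}^2$. Combined with weak convergence $\nabla f_i \to \nabla f$ and the lower semicontinuity $\|\nabla f\|_{L^2} \leq \liminf_i \|\nabla f_i\|_{L^2}$, the equality $\lim_i \|\nabla f_i\|_{L^2} = \|\nabla f\|_{L^2}$ upgrades the convergence to strong.

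The main obstacle is the lifting step in the second stage: one needs, for $\phi\in \TestF(X)$, a sequence $\phi_i^s$ on $M_i$ such that $\phi_i^s$, $\nabla \phi_i^s$, and $\Delta\phi_i^s$ all converge strongly in $L^2$. This requires Mosco convergence of the Cheeger energies together with $L^2$ convergence properties of the heat semigroups under mGH convergence, which is the genuinely nontrivial analytic input behind the theorem.
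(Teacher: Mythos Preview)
The paper does not give its own proof of this statement: Theorem~\ref{convf} is quoted verbatim from \cite[Theorem~1.3]{Ho1} as a preliminary result, with no argument supplied. So there is nothing in the paper to compare your proposal against.

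That said, your sketch is a coherent and essentially correct outline, though it follows a different route from Honda's original proof in \cite{Ho1}. Honda works directly with his $L^p$-convergence framework for Ricci limit spaces (built on the Cheeger--Colding theory), using harmonic approximations and the segment inequality rather than heat-flow regularisation. Your approach---Mosco convergence of Cheeger energies plus stability of heat semigroups under measured Gromov--Hausdorff convergence---is closer in spirit to the later Ambrosio--Honda and Gigli--Mondino--Savar\'e treatments. Either method works; yours is arguably more streamlined once one accepts the Mosco/heat-flow machinery as a black box, whereas Honda's is more self-contained within the Ricci-limit setting.

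One point to tighten: in your second stage you assert that a weak $L^2$ limit $V$ of a subsequence of $\nabla f_i$ is automatically identified with $\nabla f$ via ``lower semicontinuity of the Cheeger energy and the Mosco framework''. This identification is not automatic from Mosco convergence alone; you need to verify that weak $L^2$ limits of gradients along mGH-converging sequences are gradients of the limit function, which is exactly the content of part of \cite[Theorem~1.3]{Ho1} (or its analogues in \cite{AH}). As written you are close to assuming what you want to prove. A cleaner route is to bypass the vector-field limit entirely: integrate by parts twice to write $\int_{M_i}\langle\nabla f_i,\nabla\phi_i^s\rangle = \int_{M_i} f_i\,\Delta\phi_i^s$, pass to the limit using strong $L^2$ convergence of both $f_i$ and $\Delta\phi_i^s$, and then compare with the other integration by parts to identify $h=\Delta f$. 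The strong gradient convergence in Stage~3 then follows exactly as you wrote.
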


\subsection[Convergence to the product space S{n-p}(1) x X]{Convergence to the product space $\boldsymbol{S^{n-p}(1)\times X}$}
We summarize several results proven in \cite{Ai3} for later use.

\begin{Prop}[{\cite[Proposition~4.17, Lemma 4.22 and Theorem~4.47]{Ai3}}]\label{prev}
For given integers $n\geq 5$ and $2\leq p < n/2$ and a positive real number $\epsilon>0$, there exists $\delta=\delta(n,p,\epsilon)>0$ such that the following properties hold.
Let $(M,g)$ be an $n$-dimensional closed Riemannian manifold with $\Ric_g\geq (n-p-1)g$.
Assume that $\lambda_{n-p+1}(g)\leq n-p+\delta$ and that either
$\lambda_1(\Delta_{C,p})\leq \delta$ or~$\lambda_1(\Delta_{C,n-p})\leq \delta$.
Then, for any $f\in\Span_{\R}\{f_1,\ldots,f_{n-p+1}\}$ with $\|f\|_2^2=1/(n-p+1)$, we~have the following:
\begin{itemize}\itemsep=0pt
\item[$(i)$] There exists a measurable subset $V_f\subset M$ such that
$\Vol(M\setminus V_f)\leq \epsilon \Vol(M)$ and $|f^2+|\nabla f|^2-1|\leq \epsilon$ holds in $V_f$.
\item[$(ii)$] There exists a non-empty compact subset $A_f\subset M$ such that $|f(x)-1|\leq\epsilon$ for any $x\in A_f$, $|f(x)-\cos d(x,A_f)|\leq \epsilon$ for any $x\in M$ and $\sup_{x\in M} d(x,A_f)\leq \pi+\epsilon$ hold.
\item[$(iii)$] Define $\widetilde{\Psi}\colon M\to \R^{n-p+1}$ by $\widetilde{\Psi}(x)=(f_1(x),\ldots,f_{n-p+1}(x))$ $(x\in M)$.
Then, we have $||\widetilde{\Psi}(x)|-1|\leq \epsilon$ for any $x\in M$.
\item[$(iv)$] Choose $a_f(x)\in A_f$ such that
$d(x,A_f)=d(x,a_f(x))$ for each $x\in M$. Then, we have that the map
\[
\Phi_f \colon\ M\to S^{n-p}(1)\times A_f,\qquad
x\mapsto \bigg(\frac{\widetilde{\Psi}(x)}{|\widetilde{\Psi}(x)|}, a_f(x)\bigg)
\]
is an $\epsilon$-Hausdorff approximation.
\end{itemize}
\end{Prop}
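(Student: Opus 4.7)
The plan is to combine a refined integral Bochner analysis (in which the almost-parallel form is used to recover the ``missing'' $p$ units of Ricci curvature) with a Cheeger--Colding segment-inequality argument. Let $f$ be an eigenfunction with $-\Delta f=\lambda f$, normalized by $\|f\|_{L^2}^2=1/(n-p+1)$, and $\lambda\leq n-p+\delta$. Integrating the Bochner formula, invoking $\Ric_g\geq(n-p-1)g$ and using $\int_M|\nabla f|^2\,d\mu_g=\lambda\|f\|_{L^2}^2$ gives a bound on $\int_M|\Hess f+f g|^2\,d\mu_g$ in terms of $\delta$, modulo an unwanted positive term reflecting the gap between $\Ric\geq(n-p-1)g$ and the Obata-optimal $\Ric\geq(n-1)g$. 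The role of the almost-parallel $p$-form (or $(n-p)$-form) $\omega$ with $\|\nabla\omega\|_{L^2}\leq\delta$ is to supply a distribution of rank $p$ along which $\Hess f$ ought to vanish: by contracting the Bochner identity with $\omega$ and integrating by parts at cost $O(\|\nabla\omega\|_{L^2})$, one recovers the missing curvature and obtains a pinching of the form
\[
\int_M\bigl|\Hess f+f\,g^{\perp}\bigr|^2\,d\mu_g\leq\Psi(\delta),
\]
where $g^{\perp}$ is the projection onto the distribution transverse to the one determined by $\omega$ and $\Psi=\Psi(\delta\mid n,p)\to 0$ as $\delta\to 0$.

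Item (i) is then a consequence of the Hessian pinching together with the Bochner computation of $\Delta(f^2+|\nabla f|^2)$: the resulting expression is controlled in $L^1$ by $\Psi(\delta)$, and since $\int_M(f^2+|\nabla f|^2)\,d\mu_g=(1+\lambda)/(n-p+1)$ is close to $1$, Chebyshev produces a set $V_f$ of almost full measure on which $|f^2+|\nabla f|^2-1|\leq\epsilon$. Item (iii) is obtained by applying (i) to each unit-vector combination $f=\sum a_j f_j$ with $a\in S^{n-p}$, using the orthogonality $\int_M f_j f_k\,d\mu_g=\delta_{jk}/(n-p+1)$ together with $\sum a_j^2=1$; averaging over $a\in S^{n-p}$ and invoking an elliptic $L^2$-to-$L^\infty$ improvement yields $\bigl|\,|\widetilde\Psi(x)|-1\,\bigr|\leq\epsilon$ pointwise.

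For (ii), examine the ODE satisfied by $f$ along unit-speed geodesics $\gamma$: on the good set from (i), $((f\circ\gamma)')^2+(f\circ\gamma)^2\approx 1$ and the Hessian pinching forces $(f\circ\gamma)''\approx-(f\circ\gamma)$, so $f\circ\gamma\approx\cos(\cdot-t_0)$ along most geodesics; the segment inequality of Cheeger--Colding propagates this across the bad set. Setting $A_f=\{f\geq 1-\epsilon'\}$, which is non-empty because the $\cos$-profile attains values near $1$, one obtains $f(x)\approx\cos d(x,A_f)$ for every $x\in M$ and $\diam(M)\leq\pi+\epsilon$.

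The most delicate step is (iv), which requires a quasi-Pythagorean metric identity
\[
d(x,y)^2\approx d_{S^{n-p}}\!\bigl(\widetilde\Psi(x)/|\widetilde\Psi(x)|,\widetilde\Psi(y)/|\widetilde\Psi(y)|\bigr)^2+d(a_f(x),a_f(y))^2.
\]
The main obstacle is to establish this without presupposing that the metric splits: one must show that the horizontal direction (towards $A_f$) and the vertical directions $\nabla f_j$ are almost orthogonal in an $L^2$-integrated sense, and that minimizing geodesics between nearby points decompose accordingly. This is handled by an $\epsilon$-splitting argument in the spirit of Cheeger--Colding, applied jointly to $f_1,\ldots,f_{n-p+1}$ whose near-Hessians $-f_j g^{\perp}$ are compatible with a warped-product structure over $A_f$; the hypothesis $p<n/2$ ensures that the $(n-p)$-dimensional factor genuinely dominates and suppresses degenerations between the two factors.
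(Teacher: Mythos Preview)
The paper does not prove this proposition at all: it is quoted from the author's earlier work \cite{Ai3} (specifically Proposition~4.17, Lemma~4.22 and Theorem~4.47 there) and stated without argument in the section ``We summarize several results proven in \cite{Ai3} for later use.'' So there is no proof in the present paper to compare your attempt against.

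That said, your outline is a plan rather than a proof, and several of its steps are placeholders for substantial arguments that you have not supplied. First, the phrase ``contracting the Bochner identity with $\omega$ and integrating by parts at cost $O(\|\nabla\omega\|_{L^2})$, one recovers the missing curvature'' is precisely where the real work of \cite{Ai3} lives; one must decompose $\Hess f$ relative to the distribution determined by the (almost) parallel form and control each block separately, and the bookkeeping is delicate --- the single displayed inequality you write down does not follow from any one-line contraction. Second, your route to the pointwise bound in $(iii)$ (``averaging over $a\in S^{n-p}$ and invoking an elliptic $L^2$-to-$L^\infty$ improvement'') is not a valid argument: there is no off-the-shelf elliptic estimate that turns $L^2$ closeness of $|\widetilde\Psi|$ to $1$ into $L^\infty$ closeness under only a Ricci lower bound; in \cite{Ai3} the $L^\infty$ statement is obtained by first establishing the cosine behaviour of $(ii)$ and then exploiting it. Third, for $(iv)$ you correctly flag the quasi-Pythagorean identity as the crux, but your paragraph is a restatement of the goal, not a mechanism: the phrase ``an $\epsilon$-splitting argument in the spirit of Cheeger--Colding'' does not by itself produce the needed almost-isometry, and the remark that ``$p<n/2$ ensures that the $(n-p)$-dimensional factor genuinely dominates'' is not how that hypothesis is actually used.

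In short: the broad architecture you describe (Bochner with the form to force Hessian pinching, then segment-inequality propagation, then an almost-splitting) is consistent with the strategy in \cite{Ai3}, but none of the four items is actually proved in your write-up, and items $(iii)$ and $(iv)$ in particular contain genuine gaps rather than merely omitted routine details.
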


\begin{Prop}[{\cite[Corollary~4.53]{Ai3}}]\label{impr}
For given integers $n\geq 5$ and $2\leq p < n/2$ and a~positive real number $\epsilon>0$, there exists $\delta=\delta(n,p,\epsilon)>0$ such that if $(M,g)$ is an $n$-dimensional closed Riemannian manifold with $\Ric_g\geq (n-p-1)g$,
$\lambda_{n-p}(g)\leq n-p+\delta$
and
$\lambda_1(\Delta_{C,n-p})\leq \delta,$
then we have
$
\lambda_{n-p+1}(g)\leq n-p+\epsilon.
$
\end{Prop}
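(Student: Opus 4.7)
My plan is a contradiction argument via measured Gromov--Hausdorff compactness. Assume the conclusion fails: then for some $\epsilon_0>0$ there is a sequence $(M_i,g_i)$ with $\Ric_{g_i}\ge(n-p-1)g_i$, $\lambda_{n-p}(g_i)\to n-p$, $\lambda_1(\Delta_{C,n-p})\to 0$, yet $\lambda_{n-p+1}(g_i)\ge n-p+\epsilon_0$. Myers' theorem uniformly bounds the diameters, so Gromov compactness together with \cite[Theorem~3.22]{EKS} produces, along a subsequence, a compact $\RCD(n-p-1,n)$ limit $(X,d,\mf)$. Theorem~\ref{convf} and its $\RCD$ spectral-convergence analogues yield $\lambda_1(X)=\cdots=\lambda_{n-p}(X)=n-p$ and $\lambda_{n-p+1}(X)\ge n-p+\epsilon_0$, while after $L^2$-normalization the $(n-p)$-forms $\omega_i$ converge to a nontrivial parallel $(n-p)$-form on $X$.

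The central step is to construct an additional approximate eigenfunction of eigenvalue $n-p$. The motivating identity is that on $S^{n-p}(1)\times Y$, with ambient coordinates $x_0,\ldots,x_{n-p}$ on the first factor and $\omega=dV_{S^{n-p}(1)}$,
\[
\omega(\nabla x_1,\ldots,\nabla x_{n-p})=\pm x_0,
\]
so contracting the parallel $(n-p)$-form with the gradients of $n-p$ coordinate eigenfunctions recovers the ``missing'' $(n-p+1)$-th coordinate. Accordingly, I would define
\[
h_i:=\omega_i(\nabla f_{1,i},\ldots,\nabla f_{n-p,i}),
\]
where $f_{j,i}$ is an $L^2$-normalized eigenfunction for $\lambda_j(g_i)$, and then establish (a)~$\|h_i\|_{L^2}\ge c(n,p)>0$; (b)~$\|(-\Delta-(n-p))h_i\|_{L^2}\to 0$ (or a weaker norm sufficient for the spectral min-max); (c)~$h_i$ is asymptotically $L^2$-orthogonal to each $f_{j,i}$. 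Given (a)--(c), the min-max principle applied to $\Span\{f_{1,i},\ldots,f_{n-p,i},h_i\}$ yields $\lambda_{n-p+1}(g_i)\le n-p+o(1)$, a contradiction.

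Property (b) follows from a Bochner/Weitzenb\"ock calculation: applying the Leibniz rule to the contraction and substituting $\nabla\omega_i\to 0$ in $L^2$ and $\Delta f_{j,i}\approx-(n-p)f_{j,i}$, the Hessian terms are controlled via the $\Ric\ge(n-p-1)g$ bound and the near-rigidity of the eigenvalue relation, so the error vanishes in the limit. Property (c) then follows from (a), (b), and the assumed spectral gap.

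The main obstacle is (a). Establishing non-degeneracy of the contraction requires that the gradients $\{\nabla f_{j,i}\}$ be approximately orthonormal on a set of definite measure and that $|\omega_i|$ be essentially constant there --- information analogous to Proposition~\ref{prev}(i)--(iii), but obtained from only $n-p$ eigenfunctions, with the missing direction supplied by the parallel form itself. My preferred route is to work on the $\RCD$ limit $(X,d,\mf)$: using Gigli's second-order non-smooth calculus and the regular Lagrangian flow apparatus of Theorem~\ref{RLF}, show directly that the parallel $(n-p)$-form together with the $n-p$ coordinate eigenfunctions forces a splitting $X\cong S^{n-p}(1)\times Y$ (a Hodge-dual analogue of Gigli's splitting theorem). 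Once such a splitting is in hand, (a) is immediate from the explicit product model. Rigorously carrying out this splitting at the $\RCD$ level is the technical heart of the proof.
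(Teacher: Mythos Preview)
The paper does not prove this proposition; it is quoted verbatim from \cite[Corollary~4.53]{Ai3} as a preliminary ingredient, so there is no in-paper proof to compare against. What I can assess is whether your outline would stand on its own.

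Your construction $h_i=\omega_i(\nabla f_{1,i},\dots,\nabla f_{n-p,i})$ is exactly the right object: in the model $S^{n-p}(1)\times Y$ with $\omega=dV_{S^{n-p}}$ one computes $\omega(\nabla x_1,\dots,\nabla x_{n-p})=\det(u,e_1,\dots,e_{n-p})=\pm u_0$, so $h$ recovers the missing coordinate. Steps (b) and (c) are routine once (a) is in hand. The problem is your route to (a).

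You propose to pass to the $\RCD$ limit $(X,d,\mf)$, let the $\omega_i$ converge to a ``parallel $(n-p)$-form'' there, and then prove a de~Rham-type splitting $X\cong S^{n-p}(1)\times Y$. Two genuine obstacles:
\begin{itemize}
\item \emph{Convergence of forms.} There is no available theory that transports a sequence of almost-parallel $k$-forms on $M_i$ to a nontrivial element of $L^2(\Lambda^k T^\ast X)$ in Gigli's sense with any covariant-constancy property. Honda's $L^2$-convergence covers functions and vector fields; higher forms on non-collapsed limits are much more delicate, and you cannot simply assert the limit exists and is parallel.
\item \emph{Circularity of the splitting.} Even granting a parallel $(n-p)$-form on $X$, the splitting mechanism used in this paper (Proposition~\ref{prev}) requires $n-p+1$ coordinate eigenfunctions to build the map $\widetilde\Psi\colon X\to\R^{n-p+1}$ and then project to $S^{n-p}$. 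With only $n-p$ eigenfunctions you get a map to $\R^{n-p}$, and upgrading this to a sphere factor is precisely equivalent to producing the $(n-p{+}1)$-th eigenfunction---which is what you are trying to do. A genuine de~Rham decomposition for $\RCD$ spaces from a parallel form is not known and would be a substantial result in its own right, not a lemma en route.
\end{itemize}

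The argument in \cite{Ai3} instead stays on the smooth manifolds $M_i$ and proves (a) by direct quantitative estimates: the almost-parallel condition forces $|\omega_i|$ to be almost constant (Kato plus $\|\nabla\omega_i\|_{L^2}\to 0$), the eigenvalue pinching together with the Bochner identity forces $\nabla f_{j,i}$ to be almost orthonormal and almost tangent to the distribution singled out by $\omega_i$ (this is where the interaction between $\omega_i$ and the $f_{j,i}$, not just their separate properties, is used), and the lower bound on $\|h_i\|_{L^2}$ then drops out of an explicit algebraic identity relating $h_i^2$, $|\omega_i|^2$, and the Gram matrix of the $\nabla f_{j,i}$. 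In short: your (b), (c) and overall architecture are fine, but (a) must be handled by hard analysis on $M_i$, not by an appeal to an as-yet-nonexistent $\RCD$ splitting theorem.
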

If we assume $\lambda_1(\Delta_{C,p})\leq \delta$ instead of $\lambda_1(\Delta_{C,n-p})\leq \delta$ in Proposition~\ref{impr}, the assertion fails (see \cite[Corollary~3.2 and~Proposition~3.3]{Ai3}).
%\begin{Thm}
%Take $n\geq 5$ and $2\leq p < n/2$.
%Let $\{(M_i,g_i)\}_{i\in\mathbb{N}}$ be a sequence of $n$-dimensional closed Riemannian manifolds with $\Ric_{g_i}\geq (n-p-1)g_i$ that satisfies one of the following:
%\begin{itemize}
%\item[(i)] $\lim_{i\to\infty}\lambda_{n-p+1}(g_i)=n-p$ and $\lim_{i\to \infty}\lambda_1(\Delta_{C,p},g_i)=0$,
%\item[(ii)] $M_i$ is orientable for each $i$, $\lim_{i\to\infty}\lambda_{n-p}(g_i)=n-p$ and $\lim_{i\to \infty}\lambda_1(\Delta_{C,p},g_i)=0$,
%\item[(ii)] $\lim_{i\to\infty}\lambda_{n-p}(g_i)=n-p$ and $\lim_{i\to \infty}\lambda_1(\Delta_{C,n-p},g_i)=0$.
%\end{itemize}
%If $\{(M_i,g_i)\}_{i\in\mathbb{N}}$ converges to a geodesic space $Y$, then there exists a geodesic space $X$ such that $Y$ is isometric to $S^{n-p}(1)\times X$.
%\end{Thm}

\section{Structure of the limit}
\subsection{Splitting of the measure}
In this subsection we show that there exists a Borel measure $\mf_X$ on X such that $\mf=\nHa^{n-p}\times \mf_X$ holds under Assumption~\ref{convX} below.

\begin{Asu}\label{convX}
Take $n\geq 5$ and $2\leq p < n/2$.
Let $\{(M_i,g_i)\}_{i\in\mathbb{N}}$ be a sequence of $n$-dimen\-sional closed Riemannian manifolds with $\Ric_{g_i}\geq (n-p-1)g_i$ that satisfies $\lim\limits_{i\to\infty}\lambda_{n-p+1}(g_i)=n-p$ and either $\lim\limits_{i\to \infty}\lambda_1(\Delta_{C,p},g_i)=0$ or $\lim\limits_{i\to \infty}\lambda_1(\Delta_{C,n-p},g_i)=0$.
Let $f_{1,i},\ldots,f_{n-p+1,i}$ denote the first $n-p+1$ eigenfunctions on $(M_i,g_i)$ with $\|f_{k,i}\|_{L^2}^2=1/(n-p+1)$ ($k=1,\ldots,n-p+1$).
Put
\[
\Psi_i\colon\  M_i\to S^{n-p}(1),\qquad
x\mapsto \frac{(f_{1,i}(x),\ldots,f_{n-p+1,i}(x))}{|(f_{1,i}(x),\ldots,f_{n-p+1,i}(x))|}.
\]
Let $X$ be a compact metric space and $\mf$ be a Borel measure on $S^{n-p}(1)\times X$
with unit volume.
Suppose that, for each $i$, there exists a map $b_i\colon M_i\to X$ such that
the map
\[
(\Psi_i,b_i)\colon\ M_i\to S^{n-p}(1)\times X
\]
is an $\epsilon_i$-approximation, where $\{\epsilon_i\}$ is some sequence of positive real numbers with $\epsilon_i\to 0$ as $i\to \infty$.
Suppose that the sequence $\{(M_i,g_i, \nHa^n)\}$ converges to $(S^{n-p}(1)\times X,\mf)$ in the measured Gromov--Hausdorff topology.
Put $M:=S^{n-p}(1)\times X$.
Let
$p_1\colon M\to S^{n-p}(1)$ and $p_2\colon M\to X$ be the projections.
Define
$f_u\colon M\to\R$ by $f_u(x):=p_1(x)\cdot u$ for each $u\in S^{n-p}(1)\subset \R^{n-p+1}$.
Note that $(M,\mf)$ satisfies the $\RCD(n-p-1,n)$ condition by \cite[Theorem~3.22]{EKS}.
\end{Asu}

The goal of this section is to prove the following proposition.
\begin{Prop}\label{strX}
Under Assumption~{\rm \ref{convX}}, there exists a Borel measure $\mf_X$ on X such that $\mf=\nHa^{n-p}\times \mf_X$ holds and $(X,\mf_X)$ satisfies the $\mathrm{RCD}(n-p-1,p)$ condition.
\end{Prop}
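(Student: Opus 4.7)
My plan is to extract from the first $n-p+1$ eigenfunctions on $M_i$ a measure-preserving $SO(n-p+1)$-action on the limit $M$ that rotates the $S^{n-p}(1)$-factor and fixes $X$, use this invariance to split the measure, and finally transfer the Bakry--\'Emery inequality from $M$ to $X$. The eigenfunctions $f_{k,i}$ have $\|f_{k,i}\|_{L^2}^2=1/(n-p+1)$ and Laplacians bounded in $L^2$ (because $\lambda_{n-p+1}(g_i)\to n-p$), so Theorem~\ref{convf} provides, along a subsequence, strong $L^2$-limits $\overline{f}_k\in\D(\Delta)$ with $\Delta\overline{f}_k=-(n-p)\overline{f}_k$ and $L^2$-converging gradients. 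The form of the Hausdorff approximation $(\Psi_i,b_i)$ forces $\Span_{\R}\{\overline{f}_1,\ldots,\overline{f}_{n-p+1}\}=\Span_{\R}\{f_{e_1},\ldots,f_{e_{n-p+1}}\}$, and Proposition~\ref{prev}(i) combined with $L^2$-convergence of $|\nabla f_{k,i}|^2$ gives $f_u^2+|\nabla f_u|^2=1$ $\mf$-a.e.\ for every unit $u\in\R^{n-p+1}$; polarization yields $\langle\nabla f_u,\nabla f_v\rangle=u\cdot v-f_u f_v$ a.e.

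For each $u,v\in\R^{n-p+1}$ with $u\perp v$, set $V_{u,v}:=f_u\nabla f_v-f_v\nabla f_u$. The eigenvalue equation $\Delta f_u=-(n-p)f_u$ and the identities above give $V_{u,v}\in\D(\Div)$ with $\Div V_{u,v}=f_u\Delta f_v-f_v\Delta f_u=0$, and a careful a.e.\ computation combined with the $L^\infty$ bounds on $f_u,|\nabla f_u|$ yields $D^{\sym}V_{u,v}\in L^2(M)$, so Theorem~\ref{RLF} produces a measure-preserving regular Lagrangian flow $\Fl^{V_{u,v}}$. The corresponding vector fields on the smooth $M_i$ are close under $(\Psi_i,b_i)$ to the generators of the rotation of $S^{n-p}(1)$ in the $(u,v)$-plane (extended trivially to $M_i$); by stability of regular Lagrangian flows under measured Gromov--Hausdorff convergence, the limit flow $\Fl^{V_{u,v}}$ coincides with this rotation on the sphere factor and is the identity on $X$. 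These flows generate a measure-preserving isometric $SO(n-p+1)$-action on $M$ whose orbits are the slices $S^{n-p}(1)\times\{x\}$.

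Disintegrating $\mf$ along $p_2\colon M\to X$ and using that $\nHa^{n-p}$ is the unique $SO(n-p+1)$-invariant probability measure on $S^{n-p}(1)$, I obtain $\mf=\nHa^{n-p}\times\mf_X$ with $\mf_X:=(p_2)_\ast\mf$. Infinitesimal Hilbertianity of $(X,\mf_X)$ is inherited from that of $(M,\mf)$ via the product structure, and the Sobolev-to-Lipschitz property on $X$ follows by extending Lipschitz candidates as constants in the sphere direction. For the dimensional Bochner inequality, I test $\BE(n-p-1,n)$ on $M$ against functions $u(\theta,x)=\alpha(\theta)\beta(x)$ where $\alpha$ is a first spherical eigenfunction on $S^{n-p}(1)$ and $\beta\in\TestF(X)$; the Bochner identity and spectrum of $S^{n-p}(1)$ (for which first eigenfunctions saturate Lichnerowicz) combined with Fubini over $\mf=\nHa^{n-p}\times\mf_X$ allow one to cancel the contribution of the $n-p$ sphere directions and extract $\BE(n-p-1,p)$ on $(X,\mf_X)$. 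Together with Sobolev-to-Lipschitz, this yields $\RCD(n-p-1,p)$.

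The main obstacle I expect is the second step: verifying the hypotheses $D^{\sym}V_{u,v}\in L^2$ and $(\Div V_{u,v})^-\in L^\infty$ of Theorem~\ref{RLF} on the non-smooth limit, and rigorously identifying the resulting regular Lagrangian flow with the expected $SO(n-p+1)$-rotation. The algebraic identities underlying these computations hold only $\mf$-a.e., so the proof must pass through heat-flow regularization and a stability statement for regular Lagrangian flows under measured Gromov--Hausdorff convergence, which uses the full strength of the compatibility between $(M_i,g_i,\nHa^n)$ and $(M,\mf)$.
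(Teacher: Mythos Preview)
Your overall architecture---construct an $SO(n-p+1)$-action on the limit from regular Lagrangian flows of the eigenvector fields $V_{u,v}$, use rotation invariance to split $\mf$, then upgrade the Bakry--\'Emery dimension on the $X$-factor---is exactly the paper's. But you have misjudged where the difficulty lies.

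The step you flag as the main obstacle is not one. Identifying $\Fl^{V_{u,v}}$ with the rotation needs no stability of Lagrangian flows under measured Gromov--Hausdorff convergence: once the identities $f_u^2+|\nabla f_u|^2=1$ and $\langle\nabla f_u,\nabla f_v\rangle+f_uf_v=0$ (for $u\perp v$) hold $\mf$-a.e.\ on the limit, property~(iii) of the regular Lagrangian flow gives directly, for $\mf$-a.e.\ $z$, the closed system $\tfrac{d}{dt}f_u(\Fl_t(z))=-f_v(\Fl_t(z))$, $\tfrac{d}{dt}f_v(\Fl_t(z))=f_u(\Fl_t(z))$, which integrates to the rotation; testing against $g\circ p_2$ for $g\in\LIP(X)$ (and $\langle\nabla f_k,\nabla(g\circ p_2)\rangle=0$) gives $p_2\circ\Fl_t=p_2$. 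No reference to the approximating $M_i$ is needed, and the paper does not use any such stability.

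The real gap is your last paragraph. Write $m=n-p$ and $N=n$. Testing $\BE(K,N)$ on $M$ with a product $\alpha\beta$ where $\alpha$ is a first spherical eigenfunction does \emph{not} produce $\BE(K,N-m)$ on $X$. If you carry out the Fubini computation (expanding $\Delta(\alpha\beta)$, $|\nabla(\alpha\beta)|^2$, $\langle\nabla\Delta(\alpha\beta),\nabla(\alpha\beta)\rangle$ and integrating out the sphere), you find---in addition to an error $\tfrac{m}{N(N-m)}\!\int\phi\,(\Delta\beta+(N-m)\beta)^2$ that could in principle be localized away---an unremovable loss $-2m\!\int\phi\,|\nabla\beta|^2$, coming from $\int_{S^m}|\nabla\alpha|^2\neq 0$; this shifts the curvature constant from $K$ to $K-2m$ and survives any addition of constants or heat-flow regularization. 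The paper's cure is to replace $\alpha$ by $f_\epsilon\approx 1+d(p,\cdot)^2$ on an $\epsilon$-ball (cut off outside), whose Hessian is still close to a multiple of the metric but whose gradient is $O(\epsilon)$ on the support of the cutoff, so that the offending term is $O(\epsilon^2)$. The residual error is then $\tfrac{2m}{N(N-m)}\!\int\phi\,(\Delta\beta-2(N-m)\beta)^2$, and this is eliminated by a Ketterer-style heat-flow localization: replace $\beta$ by $P_s u$ plus a constant chosen so that $\Delta\beta-2(N-m)\beta$ vanishes at a given point, use continuity of $(t,y)\mapsto P_t(v_x^2)(y)$ to make the error $<\epsilon$ on a neighborhood, cover $X$ finitely, and let $\epsilon,s\to 0$. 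Both ingredients---the small-gradient test function on the sphere factor and the heat-flow localization on $X$---are missing from your sketch, and the first is incompatible with your choice of $\alpha$.
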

In this subsection, we show the splitting of the measure $\mf=\nHa^{n-p}\times \mf_X$.
Our approach has been inspired by~\cite{GR2018}.
We first show the following easy lemma.
\begin{Lem}\label{hfun}
Define
\[
f_k\colon\ S^{n-p}(1)\times X\to \R,\, ((u_1,\ldots,u_{n-p+1}),x)\mapsto u_k
\]
for each $k=1,\ldots, n-p+1$.
Then, we have that $f_k \in\TestF(M)$ and the following properties:
\begin{itemize}\itemsep=0pt
\item[$(i)$] For each $k=1,\ldots,n-p+1$, the sequence $\{f_{k,i}\}$ converges to $f_k$ strongly in $L^2$ as $i\to\infty$.
\item[$(ii)$] For each $k=1,\ldots,n-p+1$, the sequence $\{\nabla f_{k,i}\}$ converges to $\nabla f_k$ strongly in $L^2$ as $i\to\infty$.
\item[$(iii)$] For each $k=1,\ldots,n-p+1$, we have that $\Delta f_k= -(n-p)f_k$.
\item[$(iv)$] For each $k,l=1,\ldots,n-p+1$ with $k\neq l$, we have that
\begin{gather*}
f_k f_l +\langle\nabla f_k,\nabla f_l\rangle=0,
\\
f_k^2+|\nabla f_k|^2=1
\end{gather*}
$\mf$-a.e.\ in $M$.
\end{itemize}
\end{Lem}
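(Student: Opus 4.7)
The plan is to use the Gromov--Hausdorff approximations $(\Psi_i,b_i)\colon M_i\to S^{n-p}(1)\times X$ of Assumption~\ref{convX} to identify the limits of the eigenfunctions $f_{k,i}$ with the coordinate functions $f_k$ on $M$, and then promote the almost-identities of Proposition~\ref{prev} to exact ones. Proposition~\ref{prev}(iii) gives $|\widetilde\Psi_i|\to 1$ uniformly, so $\{f_{k,i}\}$ is uniformly $L^\infty$-bounded; combined with the convergence $\lambda_{k,i}\to n-p$ for $k\le n-p+1$ (from the almost-Lichnerowicz estimate recalled just before Theorem~\ref{prevm} together with the given $\lambda_{n-p+1}(g_i)\to n-p$), Bochner-type gradient estimates for eigenfunctions under $\Ric_{g_i}\ge (n-p-1)g_i$ yield a uniform Lipschitz bound on $\{f_{k,i}\}$. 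For $x_i\stackrel{\rm GH}\to x\in M$, the GH-approximation property gives $\Psi_i(x_i)\to p_1(x)$ in $S^{n-p}(1)$, and hence $f_{k,i}(x_i)=|\widetilde\Psi_i(x_i)|(\Psi_i(x_i))_k\to f_k(x)$. Uniform Lipschitz continuity upgrades this pointwise convergence to strong convergence at every $x\in M$ in the sense of Honda, so Proposition~\ref{eqconv} yields the strong $L^2$-convergence of~(i).

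For (ii) and~(iii), the $L^2$-bound on $\Delta f_{k,i}=-\lambda_{k,i}f_{k,i}$ inherited from~(i) puts us in position to apply Theorem~\ref{convf}, which yields strong $L^2$-convergence $\nabla f_{k,i}\to \nabla f_k$ and weak $L^2$-convergence $\Delta f_{k,i}\to \Delta f_k$. Since $-\lambda_{k,i}f_{k,i}\to -(n-p)f_k$ strongly in $L^2$ by~(i) and $\lambda_{k,i}\to n-p$, the two limits agree, giving $\Delta f_k=-(n-p)f_k$. The membership $f_k\in\TestF(M)$ is then automatic: $|f_k|\le 1$, $\Delta f_k=-(n-p)f_k\in W^{1,2}(M)$, and $|\nabla f_k|\in L^\infty$ will follow from the identity $|\nabla f_k|^2=1-f_k^2\le 1$ established in~(iv).

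For~(iv), apply Proposition~\ref{prev}(i) to $f=f_{k,i}$, which has $\|f_{k,i}\|_{L^2}^2=1/(n-p+1)$: since the exceptional set has vanishing $\nHa^n$-measure and $f_{k,i}^2$, $|\nabla f_{k,i}|^2$ are uniformly $L^\infty$-bounded, one obtains $\|f_{k,i}^2+|\nabla f_{k,i}|^2-1\|_{L^1(M_i)}\to 0$. Strong $L^2$-convergence of $f_{k,i}$ and of $\nabla f_{k,i}$ promotes this to $f_k^2+|\nabla f_k|^2=1$ $\mf$-a.e.\ on $M$. For the cross relation, apply the same argument to the combinations $g^\pm_{k\ell,i}:=(f_{k,i}\pm f_{\ell,i})/\sqrt{2}\in\Span_\R\{f_{1,i},\ldots,f_{n-p+1,i}\}$, which satisfy $\|g^\pm\|_{L^2}^2=1/(n-p+1)$ by $L^2$-orthogonality of the eigenfunctions; subtracting the two limiting identities and using the polarization identity $(g^+)^2-(g^-)^2=2f_{k,i}f_{\ell,i}$ together with its analogue for the gradients yields $f_kf_\ell+\langle\nabla f_k,\nabla f_\ell\rangle=0$ $\mf$-a.e.

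The main technical obstacle is the rigorous verification of strong $L^2$-convergence in (i)--(ii) within Honda's framework, in particular the uniform Lipschitz bound on the approximating eigenfunctions needed to deduce Honda-strong convergence from mere GH-pointwise convergence; once this is secured, items (iii) and (iv) reduce to passing the almost-identities from Proposition~\ref{prev} through the limit.
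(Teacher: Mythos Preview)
Your proposal is correct and follows essentially the same route as the paper's proof: uniform $L^\infty$ and Lipschitz bounds on the eigenfunctions (the paper cites the gradient estimate \cite[Theorem~7.3]{Pe3}), pointwise convergence $f_{k,i}(z_i)\to f_k(z)$ via $\Psi_i(z_i)\to p_1(z)$ and Proposition~\ref{prev}(iii), promotion to Honda-strong convergence at every point and hence strong $L^2$ by Proposition~\ref{eqconv}, then Theorem~\ref{convf} for (ii)--(iii), and finally Proposition~\ref{prev}(i) applied to $f_{k,i}$ and to $(f_{k,i}\pm f_{l,i})/\sqrt{2}$ for (iv). The only point where the paper is more explicit is in justifying that strong $L^2$-convergence of $f_{k,i}$ and $\nabla f_{k,i}$ (with uniform $L^\infty$ bounds) passes to $L^1$-convergence of $f_{k,i}^2$ and $|\nabla f_{k,i}|^2$ along the varying spaces, for which it cites \cite[Propositions~3.11 and~3.45, Definitions~3.25 and~3.58]{Ho1} and \cite[Proposition~3.3, Theorem~5.7]{AH}; you should supply these references where you write ``promotes this to''.
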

\begin{proof}
Clearly, $f_k$ is a Lipschitz function.
If we get $(iii)$, we have $\Delta f_k=-(n-p)f_k\in W^{1,2}(M)$, and so we have $f_k\in \TestF(M)$.

We first show that $\{f_{k,i}\}$ strongly converges to $f_k$ as $i\to \infty$ at each point $z\in M$.
Note that by the gradient estimate for eigenfunctions \cite[Theorem~7.3]{Pe3}, there exists a constant $C>0$ such that
$\|f_{k,i}\|_{L^\infty}+\|\nabla f_{k,i}\|_{L^\infty}\leq C$ holds for all $i\in \Z_{>0}$ and $k=1,\ldots, n-p+1$.
Take arbitrary $z=(u,x)\in S^{n-p}(1)\times X$ and $z_i\in M_i$ ($i\in \Z_{>0}$) with $z_i \stackrel{{\rm GH}}{\to} z$.
Since we have $\Psi_i(z_i)\to u$ in~$S^{n-p}(1)$, we have that
\[
\lim_{i\to \infty}\bigg|
\frac{f_{k,i}(z_i)}{|(f_{1,i},\ldots, f_{n-p+1,i})|(z_i)}-f_k(z)
\bigg|=0,
\]
and so $\lim_{i\to \infty}|f_{k,i}(z_i)-f_k(z)|=0$ by Proposition~\ref{prev}$(iii)$.
Since $f_{k,i}$ and $f_k$ are Lipschitz functions whose Lipschitz constants are bounded independently of $i$, we have that
\begin{gather*}
\frac{1}{\nHa^n(B_r (z_i))}\int_{B_r(z_i)}\bigg|f_{k,i}-\frac{1}{\mf(B_r(z))}\int_{B_r(z)}f_k\,{\rm d}\mf\bigg|\,{\rm d}\nHa^n\leq\frac{1}{\nHa^n(B_r (z_i))}
\\ \qquad\qquad
\times\int_{B_r(z_i)}\!\!\bigg(\!|f_{k,i}\!-\!f_{k,i}(z_i)|\!+\!|f_{k,i}(z_i)\!-\!f_k(z)|
+\frac{1}{\mf(B_r(z))}\!\int_{B_r(z)}\!\!|f_k-f_k(z)|\,{\rm d}\mf\!\bigg){\rm d}\nHa^n
\\ \qquad
\leq C r + |f_{k,i}(z_i)-f_k(z)|.
\end{gather*}
Thus,
\begin{gather*}
\lim_{r\to 0}\limsup_{i\to \infty}\frac{1}{\nHa^n(B_r (z_i))}\int_{B_r(z_i)}\bigg|f_{k,i}-\frac{1}{\mf(B_r(z))}\!\int_{B_r(z)}f_k\,{\rm d}\mf\bigg|\,{\rm d}\nHa^n=0.
\end{gather*}
Similarly, we have
\[
\lim_{r\to 0}\limsup_{i\to \infty}\frac{1}{\mf^n(B_r (z))}\int_{B_r(z)}\bigg|f_{k}-\frac{1}{\nHa^n(B_r(z_i))}\int_{B_r(z_i)}f_{k,i}\,{\rm d}\nHa^n\bigg|\,{\rm d}\mf=0.
\]
Therefore, we get $(i)$ by Proposition~\ref{eqconv}.
We get $(ii)$ by Theorem~\ref{convf}$(i)$.
By Theorem~\ref{convf}$(ii)$, we have
\[
\|\Delta f_k+(n-p)f_k\|_{L^2}\leq\liminf_{i\to \infty}\|\Delta f_{k,i}+(n-p)f_{k,i}\|_{L^2}=0,
\]
and so we get $(iii)$.
For each $k=1,\ldots,n-p+1$, we have that
\[
\|f_k^2 +|\nabla f_k|^2-1\|_{L^1}=\lim_{i\to \infty} \|f_{k,i}^2 +|\nabla f_{k,i}|^2-1\|_{L^1},
\]
by \cite[Propositions~3.11 and~3.45]{Ho1} and the original definition of the $L^2$ strong convergence \cite[Definitions~3.25 and~3.58]{Ho1}.
See also \cite[Proposition~3.3 and~Theorem~5.7]{AH}.
By Proposition~\ref{prev}$(i)$, we have $\lim\limits_{i\to \infty} \|f_{k,i}^2 +|\nabla f_{k,i}|^2-1\|_{L^1}=0$, and so we get $f_k^2+|\nabla f_k|^2=1$ $\mf$-a.e.\ in~$M$.
Similarly, applying Proposition~\ref{prev}$(i)$ to $(f_{k,i}\pm f_{l,i})/{\sqrt{2}}$, we get $f_k f_l +\langle\nabla f_k,\nabla f_l\rangle=0$ $\mf$-a.e.\ in $M$ for each $k,l=1,\ldots,n-p+1$ with $k\neq l$.
These imply $(iv)$.
\end{proof}

Let us apply Theorem~\ref{RLF} to vector fields generating rotations in $S^{n-p}(1)$.
\begin{Lem}
Take arbitrary $u=(u_1,\ldots,u_{n-p+1}),v=(v_1,\ldots,v_{n-p+1})\in S^{n-p}(1)$ with $u\cdot v=0$ and $T>0$.
Then, the vector field
\[
V_{u v}:=\sum_{i,j=1}^{n-p+1}u_i v_j(f_i\nabla f_j-f_j\nabla f_i)=f_u\nabla f_v-f_v\nabla f_u
\]
is an element of $\D(\Div)$ with $D^{\sym} V_{u v}\in L^2(M)$ and $\Div V_{u v}=0$.
Moreover, the regular Lag\-ran\-gian flow $\Fl^{V_{u v}}\colon M\times [0,T]\to M$ for $V_{ u v}$ exists and satisfies, for $\mf$-a.e.\ $z\in M$,
\begin{gather*}
f_u(\Fl^{V_{u v}}_t(z))=f_u(z) \cos t- f_v(z)\sin t,
\\
f_v(\Fl^{V_{u v}}_t(z))=f_u(z) \sin t+ f_v(z)\cos t,
\\
f_w(\Fl^{V_{u v}}_t(z))=f_w(z)\qquad (w\in S^{n-p}(1)\text{ with }u\cdot w=v\cdot w=0),
\\
p_2(\Fl^{V_{u v}}_t(z))=p_2(z)
\end{gather*}
for any $t\in[0,T]$.
Moreover,
$\Fl^{V_{u v}}$ preserves the measure $\mf$, i.e.,
\[
\big(\Fl^{V_{u v}}_t\big)_\ast \mf=\mf
\]
for any $t\in[0,T]$.
\end{Lem}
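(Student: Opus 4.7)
The plan is to verify the hypotheses of Theorem~\ref{RLF} for $V_{uv}$, so that a unique regular Lagrangian flow exists and is measure preserving, and then to use property~(iii) of the flow to read off the explicit ODEs satisfied by $f_u$, $f_v$, $f_w$ and $p_2$ along the trajectories.

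The first step is to check that $V_{uv}\in\D(\Div)$ with $\Div V_{uv}=0$ and $D^{\sym}V_{uv}\in L^2(M)$. Since $f_u,f_v\in\TestF(M)$ by Lemma~\ref{hfun}, integration by parts combined with Lemma~\ref{hfun}(iii) gives
\[
\Div(f_u\nabla f_v)=\langle\nabla f_u,\nabla f_v\rangle-(n-p)f_u f_v,
\]
and antisymmetrization shows $\Div V_{uv}=0$. For the symmetric derivative I would invoke Gigli's Hessian theory on the $\RCD(n-p-1,n)$ space $M$: the Hessians of test functions lie in $L^2(M)$, and the Leibniz-type formula $D^{\sym}(\phi\nabla\psi)=\phi\Hess\psi+\sym(d\phi\otimes d\psi)$ gives $D^{\sym}V_{uv}=f_u\Hess f_v-f_v\Hess f_u\in L^2(M)$, so~\eqref{symd} holds with $c=2\|f_u\Hess f_v-f_v\Hess f_u\|_{L^2}$ by H\"older's inequality. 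Theorem~\ref{RLF} then produces the flow and $(\Fl_t^{V_{uv}})_\ast\mf\leq \mf$; applying the same result to $-V_{uv}$ and using uniqueness of the regular Lagrangian flow yields the reverse inequality, hence $(\Fl_t^{V_{uv}})_\ast\mf=\mf$.

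To derive the rotation formulas I apply property~(iii) of the flow with the Lipschitz function $f_w$. Using the pointwise identities $f_u f_v+\langle\nabla f_u,\nabla f_v\rangle=0$ (for $u\cdot v=0$) and $f_u^2+|\nabla f_u|^2=1$, which extend by linearity from Lemma~\ref{hfun}(iv), a direct computation gives
\[
df_u(V_{uv})=-f_v,\qquad df_v(V_{uv})=f_u,\qquad df_w(V_{uv})=0\text{ for }w\perp u,v.
\]
Since $t\mapsto f_w(\Fl_t^{V_{uv}}(z))$ is absolutely continuous for $\mf$-a.e.\ $z$, the rotation ODE has a unique solution and produces the claimed formulas on $[0,T]$.

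The main obstacle is the preservation of $p_2$. For any $g\in\LIP(X)$, the function $F_s:=f_u+s(g\circ p_2)$ is Lipschitz on $M$; since $M=S^{n-p}(1)\times X$ carries the product metric while $f_u$ depends only on the first factor and $g\circ p_2$ only on the second, the local Lipschitz constant splits as
\[
\Lip F_s(u,x)^2\leq \Lip f_u(u,x)^2+s^2\Lip(g\circ p_2)(u,x)^2.
\]
Because $(M,d,\mf)$ is $\RCD(n-p-1,n)$, we have $\Lip h=|\nabla h|$ $\mf$-a.e.\ for every Lipschitz $h$, so expanding $|\nabla F_s|^2=|\nabla f_u|^2+2s\langle\nabla f_u,\nabla(g\circ p_2)\rangle+s^2|\nabla(g\circ p_2)|^2$ and comparing with the above bound yields $2s\langle\nabla f_u,\nabla(g\circ p_2)\rangle\leq 0$ $\mf$-a.e.\ for every $s\in\R$, hence $\langle\nabla f_u,\nabla(g\circ p_2)\rangle=0$ $\mf$-a.e., and similarly for $f_v$. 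Therefore $d(g\circ p_2)(V_{uv})=0$ $\mf$-a.e., and property~(iii) of the flow applied to a countable dense family in $\LIP(X)$ separating points of $X$ forces $p_2(\Fl_t^{V_{uv}}(z))=p_2(z)$ for all $t\in[0,T]$ and $\mf$-a.e.\ $z$.
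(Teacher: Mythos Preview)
Your proposal is correct and follows essentially the same route as the paper: verify $\Div V_{uv}=0$, reduce the $D^{\sym}$ bound to the $L^2$ Hessians of the test functions $f_u,f_v$, invoke Theorem~\ref{RLF}, read off the rotation ODEs from the defining property~(iii) using the algebraic identities of Lemma~\ref{hfun}(iv), obtain $\langle\nabla f_k,\nabla(g\circ p_2)\rangle=0$ from the product metric structure, and deduce measure preservation from the flow of $-V_{uv}$ via uniqueness.

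The only points of minor divergence are presentational. For $D^{\sym}V_{uv}$, the paper writes out the integration by parts explicitly and then quotes \cite[Definition~3.3.1 and Theorem~3.3.8]{Gig}; you package this as the Leibniz rule $D^{\sym}(\phi\nabla\psi)=\phi\Hess\psi+\sym(d\phi\otimes d\psi)$, which is equivalent. For the orthogonality $\langle\nabla f_u,\nabla(g\circ p_2)\rangle=0$, the paper forward-references the argument of Lemma~\ref{prsb}(iv), which uses the one-sided inequality $\Lip_M F\geq \Lip_{X_1}(F(\cdot,x_2))$; you instead use the Pythagorean upper bound $\Lip_M F_s^2\leq \Lip_M(f_u)^2+s^2\Lip_M(g\circ p_2)^2$ coming from Cauchy--Schwarz on the product distance. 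Both yield the same conclusion and only require the $\RCD$ property of $M$ (through $\Lip=|\nabla\cdot|$), not any structure on the factor $X$, which is exactly what is available at this stage.
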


\begin{proof}
Since we have
\[
\Div(f_k\nabla f_l)=-(n-p)f_k f_l+\langle\nabla f_k,\nabla f_l\rangle
\]
for each $k$, $l$ by (2.3.13) in~\cite{Gig}, we get $\Div(V_{u v})=0$.
We next check $D^{\sym} V_{u v}\in L^2(X)$.
It~is enough to show~(\ref{symd}) when $f,g\in\TestF(M)$ because
we have $\|\nabla f -\nabla P_t f\|_{L^4}\to 0$ and $\|\Delta f-\Delta P_t f\|_{L^4}\to 0$ as $t\to 0$ for each $f\in \D(\Delta)$ with $|\nabla f|\in L^4(M)$ and $\Delta f\in L^4(M)$.
Note that
\[
L^4(T M):=\{V\in L^2(T M) \colon |V|\in L^4(M)\}
\]
is a uniformly convex Banach space with the norm $\|V\|_{L^4}:=\||V|\|_{L^4}$, and that $\|\nabla P_t f\|_{L^4}\leq \|\nabla f\|_{L^4}$ for each $f\in W^{1,2}(X)$ with $|\nabla f|\in L^4(X)$ by the $\BL(n-p-1,n)$ condition for $(M,\mf)$.
Combining these and $\nabla P_t f\to \nabla f$ in $L^2(T M)$, we get $\nabla P_t f\to \nabla f$ in $L^4(T M)$.

Take $f,g\in\TestF(M)$.
A simple calculation implies
\begin{gather*}
\int_M \langle V_{u v},\nabla f\rangle \Delta g+ \langle V_{u v},\nabla g\rangle\Delta f -\langle\nabla f,\nabla g\rangle\Div V_{u v}\,{\rm d}\mf
\\ \qquad
{}=\int_M \langle\nabla f_v,\nabla f\rangle \Div(f_u\nabla g)+ \langle \nabla f_v,\nabla g\rangle \Div(f_u\nabla f) +f_u\langle \nabla f_v,\nabla\langle\nabla f,\nabla g\rangle\rangle\, {\rm d}\mf
\\ \phantom{ qquad = }
-\int_M \langle\nabla f_u,\nabla f\rangle \Div(f_v\nabla g)+ \langle \nabla f_u,\nabla g\rangle \Div(f_v\nabla f) +f_v\langle \nabla f_u,\nabla\langle\nabla f,\nabla g\rangle\rangle\, {\rm d}\mf.
\end{gather*}
Note that we have $\langle\nabla f,\nabla g\rangle\in W^{1,2}(M)$ by \cite[Proposition~3.1.3]{Gig}.
By \cite[Definition~3.3.1 and~Theorem~3.3.8]{Gig}, we get $D^{\sym} V_{u v}\in L^2(M)$.
Thus, there exists a regular Lagrangian flow $\Fl^{V_{u v}} \colon M\times [0,T]\to M$ for $V_{ u v}$ by Theorem~\ref{RLF}.

For $\mf$-a.e.\ $z\in M$, we have
\begin{align*}
\frac{\rm d}{{\rm d} t}f_w \left(\Fl^{V_{u v}}(t,z)\right)
=&\left(f_u \langle \nabla f_v,\nabla f_w\rangle-f_v\langle\nabla f_u,\nabla f_w\rangle\right) \left(\Fl^{V_{u v}}(t,z)\right)\\
=&
\begin{cases}
-f_v \big(\Fl^{V_{u v}}(t,z)\big)& (w=u),
\\
f_u \big(\Fl^{V_{u v}}(t,z)\big)& (w=v),
\\
0&  (w\in S^{n-p}(1)\text{ with }u\cdot w=v\cdot w=0)
\end{cases}
\end{align*}
for a.e.\ $t\in(0,T)$ by Lemma~\ref{hfun}.
This implies that for $\mf$-a.e.\ $z\in M$,
{\samepage\[
f_w\left(\Fl^{V_{u v}}(t,z)\right)=
\begin{cases}
f_u(z) \cos t- f_v(z)\sin t & (w=u),
\\
f_u(z) \sin t+ f_v(z)\cos t& (w=v),
\\
f_w(z)& (w\in S^{n-p}(1)\text{ with }u\cdot w=v\cdot w=0)
\end{cases}
\]
for any $t\in [0,T]$.

}

A simple calculation implies that
\[
\langle \nabla f_k,\nabla (g\circ p_2)\rangle=0
\]
$\mf$-a.e.\ in $M$ for each $g\in \LIP(X)$ and $k=1,\ldots, n-p+1$ similarly to Lemma~\ref{prsb} $(iv)$ below.
Therefore, for each $g\in \LIP(X)$ and $\mf$-a.e.\ $z\in M$, we have
\[
\frac{\rm d}{{\rm d} t}(g\circ p_2) \left(\Fl^{V_{u v}}(t,z)\right)=0
\]
for a.e.\ $t\in [0,T]$, and so
\[
(g\circ p_2) \left(\Fl^{V_{u v}}(t,z)\right)=g\circ p_2(z)
\]
for any $t\in[0,T]$.
Let $\{x_j\}_{j\in \Z_{>0}}$ be a countable dense subset of $X$.
Then, by considering $g_j:=d(x_j,\cdot)$, we get that for $\mf$-a.e.\ $z\in M$,
\[
{\rm d}\big(x_j,p_2(\Fl^{V_{u v}}(t,z))\big)={\rm d}(x_j,p_2(z))
\]
holds
for any $j\in\Z_{>0}$ and $t\in [0,T]$.
This implies for $\mf$-a.e.\ $z\in M$,
$p_2\left(\Fl^{V_{u v}}(t,z)\right)=p_2(z)$ for~any $t\in [0,T]$.

We have that $\Fl^{V_{u v}}_t\circ \Fl_t^{-V_{u v}}=\Id_M$ $\mf$-a.e.\ in $M$ for all $t\in[0,T]$ (note that $-V_{u v}=V_{v u}$), and~so
\[
\mf=\big(\Fl^{V_{u v}}_t\big)_\ast \big(\Fl_t^{-V_{u v}}\big)_\ast \mf
\leq \big(\Fl_t^{V_{u v}}\big)_\ast\mf\leq \mf.
\]
This implies the final assertion.
\end{proof}

\begin{Cor}\label{presm}
For any $T\in \mathrm{SO}(n-p+1)$, the transformation
\[
T\colon\ S^{n-p}(1)\times X\to S^{n-p}(1)\times X,\, (u,x)\mapsto (T u, x)
\]
preserves the measure $\mf$.
\end{Cor}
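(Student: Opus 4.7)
My plan is to deduce the corollary from the preceding lemma together with the elementary fact that rotations in $2$-planes generate $\mathrm{SO}(n-p+1)$. The preceding lemma produces, for each orthonormal pair $u,v\in S^{n-p}(1)$ and each $T>0$, a measure-preserving regular Lagrangian flow $\Fl^{V_{uv}}\colon [0,T]\times M\to M$ whose action on the coordinate functions $f_w$ ($w\in S^{n-p}(1)$) and on the second-factor projection $p_2$ is explicitly a planar rotation by $t$ in the first factor combined with the identity on $X$.

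First I would identify $\Fl^{V_{uv}}_t$ with the honest rotation $\widetilde{R}_{uv,t}\colon (w,x)\mapsto (R_{uv,t}w,x)$, where $R_{uv,t}\in \mathrm{SO}(n-p+1)$ denotes rotation by angle $t$ in the $(u,v)$-plane. By linearity in $w$, the three cases $w=u$, $w=v$, $w\perp u,v$ in the previous lemma determine $f_w\circ \Fl^{V_{uv}}_t$ for every $w\in S^{n-p}(1)$; since $p_1(z)$ is reconstructed from the collection $\{f_w(z)\}_{w\in S^{n-p}(1)}$ via the pairing $f_w(z)=p_1(z)\cdot w$, and since $p_2$ is preserved, the flow agrees with $\widetilde{R}_{uv,t}$ on an $\mf$-conull subset of $M$. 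Pushforwards depend only on $\mf$-a.e.\ behavior, so $(\widetilde{R}_{uv,t})_*\mf=(\Fl^{V_{uv}}_t)_*\mf=\mf$.

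Then I would let $G\subset \mathrm{SO}(n-p+1)$ denote the set of $T$ such that $(u,x)\mapsto (Tu,x)$ preserves $\mf$. This is closed under composition, and applying the same argument to $V_{vu}=-V_{uv}$ gives closure under inverses, so $G$ is a subgroup. It contains every elementary rotation $R_{uv,t}$, and since the infinitesimal generators $u v^\top - v u^\top$ span $\mathfrak{so}(n-p+1)$ and $\mathrm{SO}(n-p+1)$ is connected, the one-parameter subgroups $\{R_{uv,t}\}_t$ generate the whole group by finite products. Hence $G=\mathrm{SO}(n-p+1)$, which is the claim.

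The only delicate point is arranging the $\mf$-almost-everywhere identification of $\Fl^{V_{uv}}_t$ with $\widetilde{R}_{uv,t}$ uniformly in the parameters, since \emph{a priori} the null set in the previous lemma may depend on $t$ and $w$. I would pick countable dense families $\{w_k\}\subset S^{n-p}(1)$ and $\{t_j\}\subset [0,T]$, apply the lemma at each $(t_j,w_k)$, intersect the resulting conull sets, and then extend the coincidence to all $(t,w)$ using the continuity of $t\mapsto \Fl^{V_{uv}}(t,z)$ guaranteed by the definition of a regular Lagrangian flow together with the continuity of $w\mapsto f_w$. Beyond this routine bookkeeping step, no obstacle is expected.
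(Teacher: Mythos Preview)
Your proposal is correct and follows essentially the same route as the paper: identify the regular Lagrangian flow $\Fl^{V_{uv}}_t$ with the planar rotation $(w,x)\mapsto (R_{uv,t}w,x)$ up to an $\mf$-null set, use that pushforwards ignore null sets to conclude each such rotation preserves $\mf$, and then use that planar rotations generate $\mathrm{SO}(n-p+1)$.

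One remark on your ``delicate point'': the preceding lemma is already stated so that a single $\mf$-conull set of $z$ works for \emph{all} $t\in[0,T]$ simultaneously, and for the $w$-dependence it suffices to apply the three displayed identities to finitely many $w$'s (namely $u$, $v$, and an orthonormal basis of $\{u,v\}^{\perp}$) and extend by linearity of $w\mapsto f_w$. So no countable-dense-plus-continuity argument is actually needed, though what you wrote would also work.
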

\begin{proof}
Modifying on $\mf$-negligible subset, we have that $\Fl^{V_{u v}}_t\in \mathrm{SO}(n-p+1)$ for each $u,v\in S^{n-p}(1)$ with $u\cdot v=0$ and $t\in [0,2\pi]$.
Conversely, any $T\in \mathrm{SO}(n-p+1)$ can be expressed as a~composition of several transformations of the form $\Fl^{V_{u v}}_t$.
Thus, we get the corollary.
\end{proof}

The following proposition is the goal of this subsection.
\begin{Prop}\label{spms}
Define a Borel measure $\mf_X$ on $X$ by
\[
\mf_X:=(p_2)_\ast \mf.
\]
Then, we have $\mf=\nHa^{n-p}\times \mf_X$.
\end{Prop}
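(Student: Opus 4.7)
The plan is to exploit the rotational invariance of $\mf$ established in Corollary~\ref{presm}. Averaging over the compact group $G:=\mathrm{SO}(n-p+1)$ against its normalized Haar measure forces the $S^{n-p}(1)$-marginal of $\mf$ over each point of $X$ to coincide with $\nHa^{n-p}$, which gives the claimed product structure.

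Let ${\rm d}T$ denote the normalized Haar measure on $G$. For each $T\in G$, define $\widetilde{T}\colon M\to M$ by $\widetilde{T}(u,x):=(Tu,x)$. By Corollary~\ref{presm}, $\widetilde{T}_{\ast}\mf=\mf$ for every $T\in G$. Hence, for any bounded Borel $\phi\colon M\to\R$, integrating the identity $\int_M \phi\,{\rm d}\mf=\int_M \phi\circ\widetilde{T}\,{\rm d}\mf$ against ${\rm d}T$ and applying Fubini's theorem yields
\[
\int_M \phi\,{\rm d}\mf=\int_M\left(\int_G \phi(Tu,x)\,{\rm d}T\right){\rm d}\mf(u,x).
\]
For every fixed $u\in S^{n-p}(1)$, the orbit map $G\to S^{n-p}(1)$, $T\mapsto Tu$, pushes ${\rm d}T$ forward to the unique $G$-invariant probability measure on $S^{n-p}(1)$, namely $\nHa^{n-p}$. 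Therefore the inner integral equals $\int_{S^{n-p}(1)}\phi(v,x)\,{\rm d}\nHa^{n-p}(v)$, a quantity depending only on $x$.

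Combining this with the definition $\mf_X=(p_2)_{\ast}\mf$, we obtain
\[
\int_M \phi\,{\rm d}\mf=\int_X\int_{S^{n-p}(1)}\phi(v,x)\,{\rm d}\nHa^{n-p}(v)\,{\rm d}\mf_X(x)=\int_M \phi\,{\rm d}\big(\nHa^{n-p}\times \mf_X\big),
\]
and the equality $\mf=\nHa^{n-p}\times\mf_X$ follows from the arbitrariness of $\phi$. I do not anticipate a serious obstacle: the hard work has already been done in the preceding pages by constructing rotation-generating regular Lagrangian flows via Theorem~\ref{RLF} and deducing the $\mathrm{SO}(n-p+1)$-invariance of $\mf$, after which the identification reduces to a routine Haar-averaging computation. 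The only minor technical point is to justify the use of Fubini, which follows from the Borel measurability of $(T,u,x)\mapsto\phi(Tu,x)$; this is immediate for continuous $\phi$ and extends to bounded Borel $\phi$ by a monotone class argument.
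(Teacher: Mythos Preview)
Your argument is correct. Both proofs rest on Corollary~\ref{presm}, but the implementations differ. The paper fixes a Borel set $B\subset X$, considers the marginal $\mu_B(A):=\mf(A\times B)$ on $S^{n-p}(1)$, proves $\mu_B\ll\Ha^{n-p}$ via a packing/covering argument, and then invokes Radon--Nikodym together with the rotational invariance to conclude that the density is the constant $\mf_X(B)$. Your Haar-averaging route is more direct: by integrating the invariance identity over $\mathrm{SO}(n-p+1)$ and using that the orbit map pushes Haar measure to the unique invariant probability on the sphere, you obtain the product formula in one stroke, without ever needing absolute continuity or Radon--Nikodym. The paper's approach has the minor advantage of being self-contained (it does not appeal to the uniqueness of the invariant measure on a homogeneous space), while yours is shorter and makes the group-theoretic mechanism transparent; both are equally rigorous.
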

\begin{proof}
We first fix a Borel subset $B\subset X$. Define a Borel measure $\mu_B$ on $S^{n-p}(1)$ by
\[
\mu_B:=(p_1)_\ast \big(\mf|_{p_2^{-1}(B)}\big),
\]
i.e., we define
\[
\mu_B(A):=\mf(A\times B)
\]
for any subset $A\subset S^{n-p}(1)$.
\begin{Clm}\label{presmu}
Each $T\in \mathrm{SO}(n-p+1)$ preserves the measure $\mu_B$.
\end{Clm}
\begin{proof}
We immediately have the claim by Corollary~\ref{presm}.
\end{proof}
\begin{Clm}\label{ac}
$\mu_B\ll \Ha^{n-p}$.
\end{Clm}
\begin{proof}
By the volume estimate relative to $\Ha^{n-p}$ on $S^{n-p}(1)$, there exists a constant $C>0$ such that
\[
\max \left\{k\in\Z_{>0}\colon
\begin{array}{@{}l@{}}\text{there exist $x_1,\ldots,x_k\in S^{n-p}(1)$ such that}\\
\text{$B_r(x_i)\cap B_r(x_j)=\varnothing$ holds for each $i\neq j$}
\end{array}
\right\}\geq r^{-(n-p)}/C
\]
holds for all $r>0$.

Take $r>0$.
We can choose $k\in\Z_{>0}$ with $k\geq r^{-(n-p)}/C$ and $x_1,\ldots, x_k\in S^{n-p}(1)$ such that $B_r(x_i)\cap B_r(x_j)=\varnothing$ holds for each $i\neq j$.
By Claim~\ref{presmu}, we have that
\[
\mu_B(B_r(x_i))=\mu_B (B_r(x_j))=\mu_B(B_r(x))
\]
for all $i,j=1,\ldots,k$ and $x\in S^{n-p}(1)$.
Therefore, we get that $\mu_B(B_r(x))\leq Cr^{n-p}\mu_B(S^{n-p}(1))$ for all $x\in S^{n-p}(1)$.

Take arbitrary subset $A\subset S^{n-p}(1)$ with $\Ha^{n-p}(A)=0$ and $\epsilon>0$.
Then, by the definition of the Hausdorff measure, there exists a sequence of subsets $\{S_j\}_{j\in\Z_{>0}}$ of $S^{n-p}(1)$ such that
$A\subset \bigcup_{j=1}^\infty S_j$ and
\[
\sum_{j=1}^\infty (\diam S_j)^{n-p}<\epsilon.
\]
Choose $x_j\in S_j$ for each $j$.
Then, we have $S_j\subset B_{\diam S_j}(x_j)$, and so
\[
\mu_B(A)\leq \sum_{j=1}^\infty\mu_B(B_{\diam S_j}(x_j))
\leq C\sum_{j=1}^\infty (\diam S_j)^{n-p}\mu_B(S^{n-p}(1))\leq C\epsilon \mu_B\big(S^{n-p}(1)\big).
\]
Letting $\epsilon \to 0$, we obtain $\mu_B(A)=0$ and get the claim.
\end{proof}

By Claim~\ref{ac} and the Radon--Nikodym theorem, we have the representation $\mu_B=\rho \Ha^{n-p}$, where $\rho\colon S^{n-p}(1)\to [0,\infty]$ is some Borel function.
By Claim~\ref{presmu}, we have that for each $T\in \mathrm{SO}(n-p+1)$
\[
\rho\circ T=\rho
\]
$\Ha^{n-p}$-a.e.\ in $S^{n-p}(1)$.
This implies that $\rho$ is constant $\Ha^{n-p}$-a.e.\ in $S^{n-p}(1)$.
We have that
\[
\rho\Ha^{n-p}\big(S^{n-p}(1)\big)=\mu_B\big(S^{n-p}(1)\big)=\mf\big(S^{n-p}(1)\times B\big)=\mf_X(B),
\]
and so $\mu_B=\mf_X(B)\nHa^{n-p}$.

For each Borel sets $A\subset S^{n-p}(1)$ and $B\subset X$, we get that
\[
\mf(A\times B)=\mu_B(A)=\nHa^{n-p}(A)\mf_X(B).
\tag*{\qed}
\]
\renewcommand{\qed}{}
This implies the proposition.
\end{proof}

\subsection[Product metric measure spaces and the RCD* condition]{Product metric measure spaces and the RCD$\boldsymbol{^*}$ condition}
In the previous subsection we showed that there exists a Borel measure $\mf_X$ on X such that $\mf=\nHa^{n-p}\times \mf_X$ holds under Assumption~\ref{convX}.
In this subsection we show that $(X,\mf_X)$ satisfies the $\mathrm{RCD}(n-p-1,p)$ condition.

More generally, we consider the following assumption.
\begin{Asu}\label{Aprod}
Let $K, N\in \R$ with $N\geq 1$ and $(X_i,d_i,\mf_i)$ ($i=1,2$) be compact metric measure spaces.
Put $(M,d,\mf):=(X_1\times X_2,d_1\times d_2,\mf_1\times \mf_2)$.
Moreover, we assume the following:
\begin{itemize}\itemsep=0pt
\item $\mf_i(X_i)=1$ ($i=1,2$),
\item $(M,d,\mf)$ satisfies the $\RCD(K,N)$ condition.
\end{itemize}
For each $i=1,2$, let $p_i\colon M\to X_i$ denote the projection.
\end{Asu}

The goal of this subsection is to prove the following proposition:

\begin{Prop}\label{KN-n}
In addition to Assumption~$\ref{Aprod}$, we assume that $(X_1,d_1,\mf_1)$ is an $n$-dimen\-sio\-nal closed Riemannian manifold with the Riemannian distance and $\mf_1=\nHa^n$.
Then, \linebreak $(X_2,d_2,\mf_2)$ satisfies the $\RCD(K,N-n)$ condition if $N-n\geq 1$.
\end{Prop}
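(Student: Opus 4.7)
The plan is to verify the three defining properties of $\RCD(K,N-n)$ for $(X_2,d_2,\mf_2)$: infinitesimal Hilbertianness, the $\BE(K,N-n)$ condition, and the Sobolev-to-Lipschitz property, by pulling everything back to $M=X_1\times X_2$ and using both the $\RCD(K,N)$ structure on $M$ and the smoothness of $X_1$.

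First I would record the basic compatibility of calculi. Using Fubini together with $d=d_1\times d_2$, $\mf=\mf_1\times\mf_2$ and $\mf_i(X_i)=1$, one checks that $f\in W^{1,2}(X_2)$ lifts to $f\circ p_2\in W^{1,2}(M)$ with $|\nabla(f\circ p_2)|_M=|\nabla f|_{X_2}\circ p_2$, and analogously for $h\in C^\infty(X_1)$ and $h\circ p_1$. Orthogonality of horizontal and vertical gradients (available because $X_1$ is smooth Riemannian) then gives, for $U=f\circ p_2+\epsilon\, h\circ p_1$,
\[
 |\nabla U|_M^2=|\nabla f|_{X_2}^2\circ p_2+\epsilon^2|\nabla h|_{X_1}^2\circ p_1,\qquad \Delta_M U=(\Delta_{X_2}f)\circ p_2+\epsilon(\Delta_{X_1}h)\circ p_1,
\]
together with $P_t^M U=(P_t^{X_2}f)\circ p_2+\epsilon(P_t^{X_1}h)\circ p_1$ by uniqueness of the heat flow. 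From these identifications, infinitesimal Hilbertianness of $X_2$ follows because $\Ch_M$ is quadratic on the subspace of $p_2$-pullbacks, which is isometric to $W^{1,2}(X_2)$ with its Cheeger energy. The Sobolev-to-Lipschitz property descends similarly: if $g\in W^{1,2}(X_2)$ satisfies $|\nabla g|\le 1$, the lift $g\circ p_2$ has $|\nabla(g\circ p_2)|_M\le 1$ $\mf$-a.e.\ and so admits a $1$-Lipschitz representative on $M$, whose restriction to any slice $\{x_1^0\}\times X_2$ gives a $1$-Lipschitz representative of $g$.

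The heart of the argument is $\BE(K,N-n)$, equivalent to $\BL(K,N-n)$. I evaluate $\BL(K,N)$ on $M$ pointwise at a chosen $(x_1^0,x_2)$ — legitimate for $t>0$ since $P_t^M$ of a continuous function is continuous on $M$ — applied to $U$ as above. With $A=|\nabla P_t^{X_2}f|^2(x_2)$, $B=\Delta P_t^{X_2}f(x_2)$, $\tilde C=P_t^{X_2}|\nabla f|^2(x_2)$, the lowercase $a,b,c$ the analogous $X_1$-quantities at $x_1^0$, $\kappa=2tC(t)/N$ and $\lambda=e^{-2Kt}$, the inequality reads
\[
 A+\kappa B^2+2\epsilon\kappa B b+\epsilon^2(a+\kappa b^2)\;\le\;\lambda\tilde C+\lambda\epsilon^2 c.
\]
Since $\lambda c-a-\kappa b^2>0$ by $\BL(K,n)$ on $X_1$ (the Ricci lower bound on the smooth factor is inherited from the $\RCD(K,N)$ condition on $M$), minimizing over $\epsilon$ yields
\[
 A+\kappa B^2\cdot\frac{\lambda c-a}{\lambda c-a-\kappa b^2}\;\le\;\lambda\tilde C.
\]
Exploiting the smooth structure of $X_1$, I choose $h$ in a chart around $x_1^0$ with $h(x_1^0)=0$, $\nabla h(x_1^0)=0$ and $\Hess h(x_1^0)=\beta\cdot\Id$; a short expansion then gives $\lambda c-a\sim 2tn\beta^2$ and $\kappa b^2\sim 2tn^2\beta^2/N$ as $t\to 0$, so the dimensional ratio tends to $N/(N-n)$. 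Differentiating the resulting inequality in $t$ at $t=0$ extracts the Bochner content of $\BL$; integrating against a nonnegative test function on $X_2$ and using the inherited bound $\Ric_{X_2}\ge K$ delivers $\BE(K,N-n)$.

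The main obstacle is the dimensional improvement itself. A direct integrated $\BE$ computation with lifted test functions $u\circ p_2$ and $\phi\circ p_2$ only recovers $\BE(K,N)$ on $X_2$, because the cross terms between $X_1$- and $X_2$-parts in $(\Delta U)^2$ vanish on integration against $X_1$-independent test functions. The argument must therefore be pointwise in $x_1^0$ via $\BL(K,N)$ on $M$; this requires upgrading the $\mf$-a.e.\ inequality to a pointwise one by continuity of $P_t^M$, constructing the locally quadratic function $h$ on the smooth $X_1$, and carrying out the $t\to 0$ asymptotics carefully in the non-smooth $X_2$-setting to isolate the exact ratio $N/(N-n)$.
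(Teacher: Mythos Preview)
Your approach is correct and takes a genuinely different route from the paper. Both arguments exploit the smoothness of $X_1$ by constructing a function whose Hessian is proportional to the metric at a chosen point, but the paper works with a \emph{product} test function $f_\epsilon u$ (with $f_\epsilon\approx 1+d(p,\cdot)^2$ on $X_1$) in the integrated $\BE(K,N)$ inequality on $M$. This produces a modified $\BE(K,N-n)$ on $X_2$ carrying an explicit error term $\tfrac{2n}{N(N-n)}(\Delta u-2(N-n)u)^2$; the error is then killed by a shift trick (replacing $u$ by $u+c_i$ so the combination $\Delta u-2(N-n)u$ vanishes at a point $x_i$, covering $X_2$ by finitely many such neighbourhoods via compactness, and letting the error go to zero). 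Your use of a \emph{sum} $f\circ p_2+\epsilon\,h\circ p_1$ in the pointwise $\BL(K,N)$ inequality, followed by optimisation over $\epsilon$, bypasses the error term entirely and yields the dimensional gain $N/(N-n)$ directly. This is arguably more economical, at the price of a more delicate a.e.-versus-pointwise discussion (which is indeed handled by continuity in the smooth $x_1$-variable together with density in $\epsilon$).

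One remark: your final sentence about ``differentiating at $t=0$'' and invoking an inherited $\Ric_{X_2}\ge K$ bound is unnecessary and somewhat vague. Once you know the ratio $\tfrac{\lambda c-a}{\lambda c-a-\kappa b^2}=\tfrac{N}{N-n}(1+O(t))$ with the $O(t)$ independent of $x_2$, you already have $\BL(K,N-n)$ on $X_2$ for all small $t$ with an admissible $C'(t)=1+O(t)$; for large $t$ one may simply take $C'(t)=\tfrac{N-n}{N}\widetilde C(t)$, where $\widetilde C$ comes from the $\BL(K,N)$ condition that $X_2$ already enjoys (this is the easy corollary that each factor is $\RCD(K,N)$). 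Alternatively, $\BL$ for small $t$ yields $\BE(K,N-n)$ via the standard equivalence, with no separate appeal to a Ricci bound on $X_2$.
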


Note that if $(Y_i,d'_i,\mf'_i)$ ($i=1,2$) are $\RCD(K,N_i)$ spaces, then the product space
$(Y_1\times Y_2,\allowbreak d'_1\times d'_2,\mf'_1\times \mf_2')$ satisfies the $\RCD(K,N_1+N_2)$ condition by \cite[Theorem~3.23]{EKS}.

We first show the following easy lemma.
\begin{Lem}\label{prsb}
Under Assumption~$\ref{Aprod}$, we have the following properties:
\begin{itemize}\itemsep=0pt
\item[$(i)$] For any $f\in \LIP(M)$ and $x=(x_1,x_2)\in M$, we have that
\[
\Lip_M(f)(x)\geq \left(\Lip_{X_1} (f(\cdot,x_2))\right)(x_1)
\]
and that
\[
\Lip_M(f)(x)\geq \left(\Lip_{X_2} (f(x_1,\cdot))\right)(x_2).
\]
\item[$(ii)$] For any $f\in \LIP(X_i)$ $(i=1,2)$, we have that
\[
(\Lip_{X_i} f)\circ p_i=\Lip_M (f\circ p_i).
\]
\item[$(iii)$] For each $i=1,2$, the map $p_i^\ast\colon L^2(X_i,\mf_i)\to L^2(M,\mf),\,f\mapsto f\circ p_i$ induces an isometric immersion $p_i^\ast \colon W^{1,2}(X_i)\to W^{1,2}(M)$,
and we have that
\[
|\nabla (f\circ p_i)|=|\nabla f|\circ p_i
\]
$\mf$-a.e.\ in $M$ for any $f\in W^{1,2}(X_i)$ $(i=1,2)$.
\item[$(iv)$] For any $f\in W^{1,2}(X_1)$ and $h\in \LIP(M)$, we have that
\[
\langle\nabla (f\circ p_1),\nabla h\rangle(x)=\langle\nabla f,\nabla (h(\cdot,x_2))\rangle(x_1)
\]
for $\mf$-a.e.\ $x=(x_1,x_2)\in M$.
The similar result holds for the element of $W^{1,2}(X_2)$.
\item[$(v)$] For any $f_i\in W^{1,2}(X_i)$ $(i=1,2)$, we have that
\[
\langle\nabla (f_1\circ p_1),\nabla (f_2\circ p_2)\rangle=0
\]
$\mf$-a.e.\ in $M$.
\end{itemize}
\end{Lem}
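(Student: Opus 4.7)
The plan is to prove (i)--(v) in order, with Fubini on $\mf = \mf_1 \times \mf_2$ as the main tool and Cheeger-energy approximation bridging Lipschitz and Sobolev statements.

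For (i), fix $x = (x_1, x_2) \in M$ and observe that $y_1 \mapsto (y_1, x_2)$ is an isometric embedding of $X_1$ into $M$, so every sequence witnessing $\Lip_{X_1}(f(\cdot, x_2))(x_1)$ also witnesses a limsup for $y \to x$ in $M$ with $d_M(x,y) = d_1(x_1, y_1)$; the inequality follows, and the $X_2$-version is symmetric. For (ii), the $\geq$ direction follows directly from (i) applied to the pullback; for $\leq$, the $1$-Lipschitz continuity of $p_i$ gives
\[\frac{|f(p_i(x)) - f(p_i(y))|}{d_M(x, y)} \leq \frac{|f(p_i(x)) - f(p_i(y))|}{d_i(p_i(x), p_i(y))}\]
whenever $p_i(y) \neq p_i(x)$ (and the ratio vanishes otherwise), so taking $\limsup_{y \to x}$ yields $\Lip_M(f \circ p_i)(x) \leq \Lip_{X_i}(f)(p_i(x))$.

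For (iii), combining (ii) with Fubini gives, for each $f \in \LIP(X_i)$,
\[\int_M (\Lip_M(f \circ p_i))^2\, d\mf = \int_{X_i}(\Lip_{X_i} f)^2\, d\mf_i.\]
A Lipschitz sequence $f_n \to f \in W^{1,2}(X_i)$ with $\Lip_{X_i}(f_n) \to |\nabla f|_{X_i}$ in $L^2(X_i)$ then pulls back to $f_n \circ p_i \to p_i^\ast f$ in $L^2(M)$ with $\Lip_M(f_n \circ p_i) \to (|\nabla f|_{X_i}) \circ p_i$ in $L^2(M)$, showing $p_i^\ast f \in W^{1,2}(M)$ and $|\nabla(p_i^\ast f)|_M \leq (|\nabla f|_{X_i}) \circ p_i$ $\mf$-a.e., since the latter is a relaxed gradient. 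For the reverse bound, pick any Lipschitz approximation $g_n \to p_i^\ast f$ in $L^2(M)$ with $\Lip_M(g_n) \to |\nabla(p_i^\ast f)|_M$ in $L^2(M)$; Fubini yields that, along a subsequence, the slices of $g_n$ converge to $f$ in $L^2(X_i)$ for a.e.\ transverse coordinate, with slicewise local Lipschitz constants dominated by $\Lip_M(g_n)$ on the slice via (i). Integrating over the transverse coordinate and invoking lower semicontinuity of $\Ch_{X_i}$ gives $\Ch_{X_i}(f) \leq \Ch_M(p_i^\ast f)$, which together with the pointwise inequality forces the $\mf$-a.e.\ equality $|\nabla(p_i^\ast f)|_M = (|\nabla f|_{X_i}) \circ p_i$ and hence the isometric immersion claim.

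For (iv), note first that (iii) makes $W^{1,2}(X_1)$ isometric to a closed subspace of the Hilbert space $W^{1,2}(M)$, so $X_1$ is itself infinitesimally Hilbertian and the right-hand side is well-defined. Using the polarization identity
\[\langle \nabla u, \nabla v\rangle = \lim_{\epsilon \to 0}\frac{1}{2\epsilon}\bigl(|\nabla(u + \epsilon v)|^2 - |\nabla u|^2\bigr)\]
with $u = f \circ p_1$ and $v = h$, the slicing identification from (iii) applied to $u + \epsilon v$ restricted to $X_1 \times \{x_2\}$ expresses $|\nabla(u + \epsilon v)|_M^2(x_1, x_2)$ as $|\nabla(f + \epsilon h(\cdot, x_2))|_{X_1}^2(x_1)$ plus a term involving only the $X_2$-derivative of $\epsilon h$; expanding in $\epsilon$ and taking the limit extracts the claimed slice inner product. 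Finally, (v) is immediate from (iv) applied to $h := f_2 \circ p_2$, since $h(\cdot, x_2) = f_2(x_2)$ is $x_1$-constant and hence has vanishing $X_1$-gradient, so $\langle \nabla(f_1 \circ p_1), \nabla(f_2 \circ p_2)\rangle = 0$ $\mf$-a.e.

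The main obstacle will be the slicing argument in (iii) and its analogue underlying (iv). Since $X_1, X_2$ are not assumed infinitesimally Hilbertian in Assumption \ref{Aprod}, one cannot appeal to heat-semigroup or Bochner identifications on the factors and must instead work directly through Cheeger energies, Lipschitz approximations, and lower semicontinuity to establish the Fubini-type decomposition of the minimal relaxed gradient on the product. Once this decomposition is in place, (iv) and (v) follow by polarization and substitution.
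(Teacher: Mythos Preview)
Your treatment of (i)--(iii) and (v) is essentially correct and close to the paper's; the only variation is in the reverse inequality $\Ch_{X_i}(f)\le\Ch_M(p_i^\ast f)$ in (iii), where you pass to an a.e.\ subsequence of slices and invoke Fatou, while the paper selects for each $n$ a single good slice $x_2(n)$ via a Chebyshev-type bound. Both routes work.

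The genuine gap is in (iv). You claim that ``the slicing identification from (iii) applied to $u+\epsilon v$'' expresses $|\nabla(f\circ p_1+\epsilon h)|_M^2(x_1,x_2)$ as $|\nabla(f+\epsilon h(\cdot,x_2))|_{X_1}^2(x_1)$ plus a purely $X_2$-derivative term. But (iii) gives no such thing: it identifies $|\nabla(g\circ p_i)|_M$ with $(|\nabla g|_{X_i})\circ p_i$ only for \emph{pullbacks} $g\circ p_i$, and $f\circ p_1+\epsilon h$ is not a pullback when $h\in\LIP(M)$ is arbitrary. The Pythagorean decomposition of $|\nabla F|_M^2$ into slicewise pieces is a genuine tensorization statement that you neither prove nor cite; your closing paragraph flags it as ``the main obstacle'' but does not supply an argument, and your polarization step for (iv) presupposes exactly this unproved decomposition.

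The paper sidesteps any decomposition. For $f\in\LIP(X_1)$ it writes the polarization limit on $M$ in terms of local Lipschitz constants (using that $M$ is $\RCD$, so $|\nabla g|=\Lip_M g$ a.e.\ for Lipschitz $g$), then applies the \emph{inequality} (i) to the perturbed term $f\circ p_1+\epsilon h$ and the \emph{equality} (ii) to the unperturbed term $f\circ p_1$, obtaining
\[
\langle\nabla(f\circ p_1),\nabla h\rangle(x_1,x_2)\ \ge\ \langle\nabla f,\nabla(h(\cdot,x_2))\rangle(x_1).
\]
Replacing $h$ by $-h$ and using infinitesimal Hilbertianity (of $M$, and of $X_1$ via (iii)) reverses the inequality, giving equality; the general $f\in W^{1,2}(X_1)$ then follows by Lipschitz approximation. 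Only the one-sided pointwise comparison (i) is needed---no product-gradient decomposition.
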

\begin{proof}
We get $(i)$ and $(ii)$ straightforward by the definition.

We show $(iii)$ for $i=1$.
Take arbitrary $f\in W^{1,2}(X_1)$.
For any sequence $\{f_n\}\subset \LIP(X_1)$ with $\lim\limits_{n\to \infty}\|f_n-f\|_{L^2}=0$, we have
$\lim\limits_{n\to \infty}\|f_n\circ p_1-f\circ p_1\|_{L^2}=0$, and so
\[
\Ch_M(f\circ p_1)\leq\frac{1}{2} \liminf_{n\to \infty}\int_M (\Lip_M(f_n\circ p_1))^2\,{\rm d}\mf=\frac{1}{2}\liminf_{n\to \infty}\int_{X_1}(\Lip_{X_1} f_n)^2\,{\rm d}\mf_1
\]
by $(ii)$. This implies $\Ch_M(f\circ p_1)\leq \Ch_{X_1}(f)$ and $f\circ p_1\in W^{1,2}(M)$.

We next show $\Ch_M(f\circ p_1)\geq \Ch_{X_1}(f)$.
Take any sequence $\{f_n\}\subset\LIP(M)$ with $\epsilon_n:=\|f_n-f\circ p_1\|_{L^2}^2\to 0$ as $n\to\infty$.
We can assume $\epsilon_n<1$ for each $n$.
We have that
\begin{gather*}
\epsilon_n=\int_{X_2} \int_{X_1} (f_n(x_1,x_2)-f(x_1))^2\,{\rm d}\mf_1(x_1)\,{\rm d}\mf_2(x_2),
\\
\frac{1}{2} \int_M (\Lip_M f_n)^2\,{\rm d}\mf\geq \frac{1}{2}\int_{X_2} \int_{X_1} (\Lip_{X_1} f_n(\cdot,x_2))^2(x_1)\,{\rm d}\mf_1(x_1)\,{\rm d}\mf_2(x_2)
\end{gather*}
by $(i)$,
and so
\begin{gather*}
\mf_2\bigg(\bigg\{x_2\in X_2\colon \int_{X_1}(f_n(\cdot,x_2)-f)^2\,{\rm d}\mf_1> 2\epsilon_n^{1/2}\bigg\}\bigg)\leq \frac{1}{2}\epsilon_n^{1/2},
\\
\mf_2\bigg(\bigg\{x_2\in X_2\colon \int_{X_1}(\Lip_{X_1}(f_n(\cdot,x_2)))^2\,{\rm d}\mf_1> (1+\epsilon_n^{1/2})\int_M(\Lip_M f_n)^2\,{\rm d}\mf\bigg\}\bigg)\leq \frac{1}{1+\epsilon^{1/2}}.
\end{gather*}
Since we have
\[
\frac{1}{2}\epsilon_n^{1/2}+\frac{1}{1+\epsilon_n^{1/2}}<1,
\]
we can take a sequence $\{x_2(n)\}\subset X_2$ such that
\begin{gather*}
\int_{X_1}(f_n(\cdot,x_2(n))-f)^2\,{\rm d}\mf_1\leq 2\epsilon_n^{1/2},
\\
\int_{X_1}(\Lip_{X_1}(f_n(\cdot,x_2(n))))^2\,{\rm d}\mf_1
\leq  (1+\epsilon_n^{1/2})\int_M(\Lip_M f_n)^2\,{\rm d}\mf
\end{gather*}
for each $n$.
Put $g_n:=f_n(\cdot, x_2(n))\in\LIP(X_1)$.
Then, we have $\|g_n-f\|_{L^2}\to 0$ as $n\to\infty$ and
\[
\Ch_{X_1}(f)\leq \frac{1}{2}\liminf_{n\to \infty}\int_{X_1}(\Lip_{X_1}g_n)^2\,{\rm d}\mf_1\leq \frac{1}{2}\liminf_{n\to \infty}\int_M(\Lip_M f_n)^2\,{\rm d}\mf.
\]
Thus, we get $\Ch_M(f\circ p_1)\!\geq\! \Ch_{X_1}(f)$, and so $\Ch_M(f\circ p_1)\!=\! \Ch_{X_1}(f)$.
Therefore, $p_1^\ast\colon W^{1,2}(X_1)\!\to W^{1,2}(M)$ is isometric

Let us show that $|\nabla (f\circ p_1)|=|\nabla f|\circ p_1$ $\mf$-a.e.\ in $M$.
Take $f_n\in \LIP(X_1)$ such that $f_n\to f$ and $\Lip_{X_1} f_n\to |\nabla f|$ in $L^2$.
Then, we have $f_n\circ p_1\to f\circ p_1$, $\Lip_M (f_n\circ p_1)=(\Lip_{X_1} f_n)\circ p_1 \to |\nabla f|\circ p_1$ in $L^2$ and
\[
\Ch_{M}(f\circ p_1)=\Ch_{X_1}(f)=\frac{1}{2}\int_{M}(|\nabla f|\circ p_1)^2\,{\rm d}\mf.
\]
This implies $|\nabla (f\circ p_1)|=|\nabla f|\circ p_1$ $\mf$-a.e.\ in $M$.

Let us prove $(iv)$.
We first consider the case $f\in \Lip(X_1)$.
Then, we have
\begin{gather*}
\begin{split}
\langle\nabla(f\circ p_1),\nabla h\rangle(x_1,x_2)
&=\lim_{\epsilon\to 0}\frac{1}{2\epsilon}\left(\Lip_M(f\circ p_1+\epsilon h)^2-\Lip_M(f\circ p_1)^2\right)(x_1,x_2)\\
&\geq \lim_{\epsilon\to 0}\frac{1}{2\epsilon}\left(\Lip_{X_1}(f+\epsilon h(\cdot,x_2))^2-\Lip_{X_1}(f)^2\right)(x_1)\\
&=\langle\nabla f,\nabla (h(\cdot,x_2))\rangle(x_1)
\end{split}
\end{gather*}
for $\mf$-a.e.\ $(x_1,x_2)\in M$.
By considering $-h$ instead of $h$, we also get \[\langle\nabla(f\circ p_1),\nabla h\rangle(x_1,x_2)\leq \langle\nabla f,\nabla (h(\cdot,x_2))\rangle(x_1),\] and so \[\langle\nabla(f\circ p_1),\nabla h\rangle(x_1,x_2)= \langle\nabla f,\nabla (h(\cdot,x_2))\rangle(x_1)\]
for $\mf$-a.e.\ $(x_1,x_2)\in M$.
For general $f\in W^{1,2}(X_1)$, approximating $f$ by Lipschitz functions, we~get $(iv)$.

Finally we show $(v)$.
We have $(v)$ for each $f_i\in\LIP(X_i)$ ($i=1,2$) by $(iv)$.
For general $f_i\in W^{1,2}(X_i)$, approximating $f_i$ by Lipschitz functions, we get $(v)$.
\end{proof}

We immediately get the following corollary by Lemma~\ref{prsb}$(iii)$.
\begin{Cor}\label{ihsl}
Under Assumption~$\ref{Aprod}$, we have that the metric measure space $(X_i,d_i,m_i)$ is infinitesimally Hilbertian and satisfies the Sobolev-to-Lipschitz property for each $i=1,2$.
\end{Cor}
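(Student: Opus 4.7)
The plan is to deduce both properties for the factors directly from the corresponding properties for the product, using the isometric embedding $p_i^\ast \colon W^{1,2}(X_i) \to W^{1,2}(M)$ supplied by Lemma~\ref{prsb}$(iii)$.

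For infinitesimal Hilbertianity, recall that $(M,d,\mf)$ is infinitesimally Hilbertian by the $\RCD(K,N)$ assumption, i.e., $W^{1,2}(M)$ is a Hilbert space. By Lemma~\ref{prsb}$(iii)$, the map $p_i^\ast$ is a linear isometry from $W^{1,2}(X_i)$ onto its image inside the Hilbert space $W^{1,2}(M)$. The parallelogram identity therefore transfers back to $W^{1,2}(X_i)$, and so $W^{1,2}(X_i)$ is Hilbert. Equivalently, $\Ch_{X_i}$ is a quadratic form, which is exactly the definition of infinitesimal Hilbertianity.

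For the Sobolev-to-Lipschitz property I would argue as follows. Without loss of generality take $i=1$ and let $f \in W^{1,2}(X_1)$ satisfy $|\nabla f|\leq 1$ $\mf_1$-a.e.\ in $X_1$. By Lemma~\ref{prsb}$(iii)$, $f\circ p_1 \in W^{1,2}(M)$ with
\[
|\nabla(f\circ p_1)|=|\nabla f|\circ p_1 \leq 1 \qquad \mf\text{-a.e.\ in }M.
\]
The space $(M,d,\mf)$ satisfies Sobolev-to-Lipschitz (it is an $\RCD(K,N)$ space), so there is a $1$-Lipschitz representative $F\colon M \to \R$ of $f\circ p_1$. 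The main small point to check is then that one can restrict $F$ to a slice to recover a $1$-Lipschitz representative of $f$: by Fubini applied to $\{F\neq f\circ p_1\}\subset M$, there exists some $x_2\in X_2$ such that $F(\,\cdot\,,x_2)=f$ $\mf_1$-a.e.\ in $X_1$; because $d_M((x_1,x_2),(x_1',x_2))=d_1(x_1,x_1')$, the function $\tilde f(x_1):=F(x_1,x_2)$ is $1$-Lipschitz on $(X_1,d_1)$, and it agrees with $f$ $\mf_1$-a.e. Hence $f$ has a $1$-Lipschitz representative, as desired.

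The only step that requires any thought is the slicing argument for Sobolev-to-Lipschitz; everything else is an immediate transfer along the isometric embedding $p_i^\ast$. I do not anticipate a genuine obstacle: the product metric $d_1\times d_2$ restricted to a slice $X_1\times\{x_2\}$ coincides with $d_1$, which is precisely what makes the restriction of a $1$-Lipschitz function on $M$ a $1$-Lipschitz function on $X_1$.
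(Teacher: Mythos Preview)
Your proof is correct and follows exactly the route the paper intends: the paper's proof is the single sentence ``We immediately get the following corollary by Lemma~\ref{prsb}$(iii)$,'' and you have simply spelled out the two implicit steps---pulling back the parallelogram law along the isometric embedding $p_i^\ast$ for infinitesimal Hilbertianity, and lifting $f$ to $M$, applying Sobolev-to-Lipschitz there, then slicing back via Fubini for the second property. Nothing is missing or different in substance.
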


For any $f_i\in L^2(X_i)$, we shall denote $f_i\circ p_i\in L^2(M)$ by $f_i$ briefly if there is no confusion.
\begin{Lem}\label{sbprod}
Under Assumption~$\ref{Aprod}$, we have $f_1 f_2\in W^{1,2}(M)$ and
\[
\nabla (f_1 f_2)=f_1 \nabla f_2+f_2 \nabla f_1\in L^2(T M)
\]
for any $f_i\in W^{1,2}(X_i)$ $(i=1,2)$.
\end{Lem}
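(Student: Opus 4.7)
The plan is to reduce to the case of bounded factors via truncation, invoke the Leibniz rule for products of bounded $W^{1,2}$ functions in the infinitesimally Hilbertian space $M$, and then pass to the limit using Fubini, which controls $L^2$ norms of products cleanly because $\mf = \mf_1 \times \mf_2$.

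First I would handle bounded factors. Suppose $f_i \in W^{1,2}(X_i) \cap L^\infty(X_i)$ for $i=1,2$. By Lemma \ref{prsb}(iii), $f_i \circ p_i \in W^{1,2}(M) \cap L^\infty(M)$. Since $(M,d,\mf)$ is infinitesimally Hilbertian (it is $\RCD(K,N)$), the standard Leibniz rule for products of bounded $W^{1,2}$ functions on $M$ (which follows from the pointwise Leibniz rule for the differential in the tangent/cotangent module calculus of \cite{Gig}) gives $(f_1\circ p_1)(f_2\circ p_2)\in W^{1,2}(M)$ with $\nabla((f_1\circ p_1)(f_2\circ p_2))=(f_1\circ p_1)\nabla(f_2\circ p_2)+(f_2\circ p_2)\nabla(f_1\circ p_1)$.

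Next I would remove boundedness. For arbitrary $f_i\in W^{1,2}(X_i)$, set $f_i^n:=\max(\min(f_i,n),-n)$. By Corollary \ref{ihsl} each $(X_i,d_i,\mf_i)$ is infinitesimally Hilbertian, so the Lipschitz chain rule yields $f_i^n\in W^{1,2}(X_i)\cap L^\infty(X_i)$ with $\nabla f_i^n=\chi_{\{|f_i|\le n\}}\nabla f_i$, and dominated convergence gives $f_i^n\to f_i$ in $L^2(X_i)$ and $\nabla f_i^n\to\nabla f_i$ in $L^2(TX_i)$. Step one then gives $F_n:=(f_1^n\circ p_1)(f_2^n\circ p_2)\in W^{1,2}(M)$ with $\nabla F_n=f_1^n\nabla f_2^n+f_2^n\nabla f_1^n$ (tacitly composing with $p_i$).

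The final step is to show $F_n\to f_1 f_2$ in $L^2(M)$ and $\nabla F_n\to f_1\nabla f_2+f_2\nabla f_1$ in $L^2(TM)$, which then yields the conclusion by closedness of the differential. This is where Fubini does the work: for $a\in L^2(X_1)$, $b\in L^2(X_2)$ and $V\in L^2(TX_2)$,
\[
\|(a\circ p_1)(b\circ p_2)\|_{L^2(M)}=\|a\|_{L^2(X_1)}\|b\|_{L^2(X_2)},\qquad \|(a\circ p_1)\,V\|_{L^2(TM)}=\|a\|_{L^2(X_1)}\|V\|_{L^2(TX_2)},
\]
where in the second identity we use Lemma \ref{prsb}(iii) to pull $|V|$ back from $X_2$. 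Splitting $F_n-f_1 f_2$ and $\nabla F_n-(f_1\nabla f_2+f_2\nabla f_1)$ into the usual four cross terms, each factor is either uniformly bounded in the relevant $L^2$ norm (using $|f_i^n|\le|f_i|$) or converges to $0$ in $L^2$, so each cross term vanishes. Note that Lemma \ref{prsb}(v) kills the cross term in $|f_1\nabla f_2+f_2\nabla f_1|^2$, giving the clean a priori bound $\|f_1\nabla f_2+f_2\nabla f_1\|_{L^2(TM)}^2=\|f_1\|_{L^2(X_1)}^2\|\nabla f_2\|_{L^2(TX_2)}^2+\|f_2\|_{L^2(X_2)}^2\|\nabla f_1\|_{L^2(TX_1)}^2$, confirming the limiting vector field lies in $L^2(TM)$. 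The only mildly delicate point is justifying the bounded Leibniz rule in Step one in the abstract infinitesimally Hilbertian setting; beyond that the argument is bookkeeping with truncation and Fubini.
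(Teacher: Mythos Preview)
Your argument is correct and follows essentially the same route as the paper: approximate each factor by bounded $W^{1,2}$ functions, apply the Leibniz rule from \cite[Theorem~2.2.6]{Gig} to the bounded approximants, and pass to the limit in $L^2(M)$ and $L^2(TM)$ using Fubini on the product measure together with the closedness of the differential. The only difference is cosmetic: the paper approximates $f_i$ by Lipschitz functions on the compact $X_i$ (automatically bounded), whereas you use truncations $f_i^n=\max(\min(f_i,n),-n)$ and the chain rule; both yield bounded sequences converging in $W^{1,2}(X_i)$, so the remaining steps are identical.
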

\begin{proof}
Take sequences $\{f_{i,n}\}_{n\in \Z_{>0}}\subset \LIP(X_i)$ ($i=1,2$) such that
$f_{i,n}\to f_i$ and $\Lip_{X_i}(f_{i,n})\to |\nabla f_{i,n}|$ in $L^2(X_i)$ as $n\to \infty$.
We have $f_{1,n} f_{2,n}\in W^{1,2}(M)$ and $\nabla (f_{1,n} f_{2,n})=f_{1,n}\nabla f_{2,n} +f_{2,n}\nabla f_{1,n}$ by \cite[Theorem~2.2.6]{Gig}.
Then, $f_{1,n} f_{2,n}\to f_1 f_2$ in $L^2(M)$ and
\[
\nabla (f_{1,n} f_{2,n})=f_{1,n}\nabla f_{2,n} +f_{2,n}\nabla f_{1,n}\to f_1\nabla f_2+f_2\nabla f_1\]
in $L^2(T M)$.
Thus, we get that $f_{1,n} f_{2,n}\to f_1 f_2$ in $W^{1,2}(M)$ and
$\nabla (f_1 f_2)=f_1 \nabla f_2+f_2 \nabla f_1\in L^2(T M)$.
\end{proof}

Let us consider the Laplacian on $M$.
\begin{Lem}\label{Laprod}
Under Assumption~$\ref{Aprod}$, we have the following properties:
\begin{itemize}\itemsep=0pt
\item[$(i)$] For each $i=1,2$, the map $p_i^\ast\colon L^2(X_i)\to L^2(M)$ induces a map $p_i^\ast \colon \D(\Delta_{X_i})\to \D(\Delta_M)$, and we have that
\[
(\Delta_{X_i}f)\circ p_i=\Delta_M(f\circ p_i)
\]
for any $f\in \D(\Delta_{X_i})$ $(i=1,2)$.
Thus, we use the same notation $\Delta$ for $\Delta_M$ and $\Delta_{X_i}$ $(i=1,2)$.
For any $f\in \D(\Delta_{X_i})$, we shall denote $(\Delta_{X_i} f)\circ p_i$ by $\Delta f$ briefly if there is no confusion.
\item[$(ii)$] For any $f_i\in\D(\Delta_{X_i})$ $(i=1,2)$, we have that $f_1 f_2\in\D(\Delta_M)$ and that
\[
\Delta(f_1 f_2)=f_1\Delta f_2+(\Delta f_1) f_2.
\]
\end{itemize}
\end{Lem}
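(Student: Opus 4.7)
The plan is to verify both parts by expanding the defining identity for the Laplacian, reducing to the factors via Fubini and Lemma \ref{prsb}(iv), and then extending from Lipschitz test functions to all of $W^{1,2}(M)$ by density.

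For (i), I take $f_1\in\D(\Delta_{X_1})$; Lemma \ref{prsb}(iii) gives $f_1\circ p_1\in W^{1,2}(M)$. It suffices to verify
\[
\mathcal{E}_M(f_1\circ p_1,g)=-\int_M g\cdot(\Delta_{X_1}f_1)\circ p_1\,{\rm d}\mf
\]
for every $g\in W^{1,2}(M)$. For $g\in\LIP(M)$, Lemma \ref{prsb}(i) guarantees $g(\cdot,x_2)\in\LIP(X_1)\subset W^{1,2}(X_1)$ for every $x_2$, and Lemma \ref{prsb}(iv) identifies $\langle\nabla(f_1\circ p_1),\nabla g\rangle(x_1,x_2)=\langle\nabla f_1,\nabla g(\cdot,x_2)\rangle(x_1)$ for $\mf$-a.e.\ $(x_1,x_2)$. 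Fubini then gives $\mathcal{E}_M(f_1\circ p_1,g)=\int_{X_2}\mathcal{E}_{X_1}(f_1,g(\cdot,x_2))\,{\rm d}\mf_2$, and invoking the defining property of $\Delta_{X_1}f_1$ on each fiber followed by Fubini once more yields the desired right-hand side. To extend to arbitrary $g\in W^{1,2}(M)$, I use that the very definition of $W^{1,2}$ provides Lipschitz $g_n$ with $g_n\to g$ in $L^2$ and $\Lip g_n\to|\nabla g|$ in $L^2$; this forces $\Ch_M(g_n)\to\Ch_M(g)$ and hence $g_n\to g$ strongly in the Hilbert space $W^{1,2}(M)$, and both sides of the identity depend continuously on $g$ in this topology.

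For (ii), Lemma \ref{sbprod} gives $f_1 f_2\in W^{1,2}(M)$ with $\nabla(f_1 f_2)=f_1\nabla f_2+f_2\nabla f_1$, and the product structure of $\mf$ together with $\|f_i\|_{L^2(X_i)},\|\nabla f_i\|_{L^2(X_i)},\|\Delta f_i\|_{L^2(X_i)}<\infty$ ensures that each of $f_1 f_2$, $f_1\nabla f_2$, $f_2\nabla f_1$, and $f_1\Delta f_2+f_2\Delta f_1$ lies in the appropriate $L^2$ space. For Lipschitz $g$ I expand
\[
\mathcal{E}_M(f_1 f_2,g)=\int_M f_1\langle\nabla f_2,\nabla g\rangle\,{\rm d}\mf+\int_M f_2\langle\nabla f_1,\nabla g\rangle\,{\rm d}\mf,
\]
and apply the Fubini-and-integrate-by-parts argument of (i) to each summand, with the roles of the two factors interchanged in the first term, producing $-\int_M g(f_1\Delta f_2+f_2\Delta f_1)\,{\rm d}\mf$. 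Density of $\LIP(M)$ in $W^{1,2}(M)$ then extends the identity to all $g\in W^{1,2}(M)$, identifying $\Delta_M(f_1 f_2)=f_1\Delta f_2+f_2\Delta f_1$.

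No step is a real obstacle here: Lemma \ref{prsb}(iv) already furnishes the fiber-pairing identity that legitimizes the Fubini decomposition of $\mathcal{E}_M$, and the density of Lipschitz functions in $W^{1,2}(M)$ is built into the definition of $W^{1,2}$. The point requiring a little care is verifying Fubini's hypotheses, but this follows immediately from Cauchy--Schwarz together with the assumed $L^2$-bounds on $f_i$, $\nabla f_i$, and $\Delta f_i$ and the product structure of $\mf$.
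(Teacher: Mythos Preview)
Your argument is correct and follows essentially the same route as the paper: use Lemma~\ref{prsb}(iv) to rewrite $\langle\nabla(f\circ p_1),\nabla g\rangle$ fiberwise, apply Fubini and the defining relation for $\Delta_{X_1}$ on each slice, and then pass from Lipschitz test functions to all of $W^{1,2}(M)$ by density; part~(ii) is handled identically via Lemma~\ref{sbprod}. The only difference is that you spell out the integrability checks and the density step in more detail, while the paper simply cites that $\LIP(M)$ is dense in $W^{1,2}(M)$.
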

\begin{proof}
We show $(i)$ for $i=1$.
Take arbitrary $f\in \D(\Delta_{X_1})$.
Then, for any $\phi\in \LIP(M)$, we have
\begin{align*}
\int_M\langle\nabla (f\circ p_1),\nabla\phi\rangle
=&\int_{X_2}\int_{X_1}\langle\nabla f,\nabla (\phi(\cdot,x_2))\rangle(x_1)\,{\rm d}\mf_1(x_1)\,{\rm d}\mf_2(x_2)\\
=&-\int_{X_2}\int_{X_1} \Delta_{X_1} f(x_1)\phi(x_1,x_2)\,{\rm d}\mf_1(x_1)\,{\rm d}\mf_2(x_2)\\
=&-\int_M (\Delta_{X_1} f)\circ p_1 \cdot \phi \,{\rm d}\mf .
\end{align*}
Since $\LIP(M)\subset W^{1,2}(M)$ is dense with respect to the norm $\|\cdot\|_{W^{1,2}}$, we get $(i)$.

We next show $(ii)$.
Take arbitrary $f_i\in\D(\Delta_{X_i})$ ($i=1,2$).
Then, for any $\phi\in\LIP(M)$, we have
\begin{align*}
\int_M\langle\nabla (f_1 f_2),\nabla\phi\rangle=&\int_{X_1}\int_{X_2}f_1(x_1)\langle\nabla f_2,\nabla (\phi(x_1,\cdot))\rangle(x_2)\,{\rm d}\mf_1(x_1)\,{\rm d}\mf_2(x_2)
\\
&{}+ \int_{X_1}\int_{X_2}f_2(x_2)\langle\nabla f_1,\nabla (\phi(\cdot, x_2))\rangle(x_1)\,{\rm d}\mf_1(x_1)\,{\rm d}\mf_2(x_2)
\\
=&-\int_M (f_1\Delta f_2+(\Delta f_1)f_2) \phi \,{\rm d}\mf.
\end{align*}
Since $\LIP(M)\subset W^{1,2}(M)$ is dense, we get $(ii)$.
\end{proof}

Our goal is to show $(X_2,d_2,\mf_2)$ satisfies the $\RCD(K,N-n)$ condition under the assumption of Proposition~\ref{KN-n}.
However, we can show the following weaker assertion under Assumption~\ref{Aprod}.
\begin{Cor}
Under Assumption~$\ref{Aprod}$, we have that the metric measure space $(X_i,d_i,\mf_i)$ satisfies the $\RCD(K,N)$ condition for each $i=1,2$.
\end{Cor}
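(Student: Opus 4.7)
The plan is to reduce the verification of the $\RCD(K,N)$ condition to checking the Bakry--\'{E}mery condition. By Corollary \ref{ihsl}, each $(X_i,d_i,\mf_i)$ is already known to be infinitesimally Hilbertian and to satisfy the Sobolev-to-Lipschitz property, so in view of Definition \ref{rcd} it suffices to verify $\BE(K,N)$. The natural idea is to lift any admissible test pair $(u,\phi)$ on $X_i$ to $(u\circ p_i,\phi\circ p_i)$ on $M$, apply the $\BE(K,N)$ condition which is already known to hold on the product, and then push the resulting integral inequality back to $X_i$ via Fubini; by symmetry it is enough to carry this out for $i=1$.

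Concretely, one fixes $u\in\D(\Delta_{X_1})$ with $\Delta u\in W^{1,2}(X_1)$ and $\phi\in\D(\Delta_{X_1})\cap L^\infty(X_1)$ with $\phi\geq 0$ and $\Delta\phi\in L^\infty(X_1)$. Lemma \ref{Laprod}(i) together with Lemma \ref{prsb}(iii) gives $u\circ p_1\in\D(\Delta_M)$ with $\Delta_M(u\circ p_1)=(\Delta u)\circ p_1\in W^{1,2}(M)$, and analogously $\phi\circ p_1\in\D(\Delta_M)\cap L^\infty(M)$ is nonnegative with $\Delta_M(\phi\circ p_1)\in L^\infty(M)$. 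Hence the lifted pair satisfies the hypotheses of $\BE(K,N)$ on $M$. Plugging it into the $\BE(K,N)$ inequality on $M$ and invoking the identities $|\nabla_M(u\circ p_1)|^2=|\nabla u|^2\circ p_1$, $\Delta_M(\phi\circ p_1)=(\Delta\phi)\circ p_1$, and $\langle\nabla_M((\Delta u)\circ p_1),\nabla_M(u\circ p_1)\rangle=\langle\nabla\Delta u,\nabla u\rangle\circ p_1$ shows that every integrand is the pullback of a function on $X_1$. Since $\mf=\mf_1\times\mf_2$ with $\mf_2(X_2)=1$, Fubini collapses each integral to its counterpart on $X_1$ and yields exactly the $\BE(K,N)$ inequality required.

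The only step that needs slightly more care than a direct appeal to the lemmas is the identity $\langle\nabla_M((\Delta u)\circ p_1),\nabla_M(u\circ p_1)\rangle=\langle\nabla\Delta u,\nabla u\rangle\circ p_1$, since Lemma \ref{prsb}(iv) is formulated for a Lipschitz second factor, whereas here $u\circ p_1$ is only in $W^{1,2}(M)$. I would deduce it by polarization from Lemma \ref{prsb}(iii): for every $\epsilon\in\R$ one has $|\nabla_M((\Delta u+\epsilon u)\circ p_1)|^2=|\nabla(\Delta u+\epsilon u)|^2\circ p_1$, and differentiating at $\epsilon=0$ (using the well-definedness of the pointwise scalar product on an infinitesimally Hilbertian space) gives the required formula. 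Aside from this bookkeeping point, which is the main potential pitfall, the argument is a routine transfer and no new analytic input beyond Lemmas \ref{prsb} and \ref{Laprod} is required.
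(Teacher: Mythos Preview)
Your proposal is correct and follows essentially the same approach as the paper: reduce to $\BE(K,N)$ via Corollary~\ref{ihsl}, lift an admissible pair $(u,\phi)$ through $p_i$ using Lemmas~\ref{prsb} and~\ref{Laprod}, apply $\BE(K,N)$ on $M$, and use Fubini with $\mf_j(X_j)=1$ to descend. The paper's proof is a one-line compression of exactly this argument; your extra care about the pointwise scalar product identity (via polarization of Lemma~\ref{prsb}(iii)) is a legitimate way to justify a step the paper leaves implicit.
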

\begin{proof}
We only need to show that $(X_i,d_i,\mf_i)$ satisfies the $\BE(K,N)$ condition by Corollary~\ref{ihsl}.
For any $u_i\in\D(\Delta_{X_i})$ with $\Delta u_i\in W^{1,2}(X_i)$ and $\phi_i\in\D(\Delta_{X_i})\cap L^\infty(X_i)$ with $\phi_i\geq 0$ and $\Delta_{X_i}\phi_i\in L^\infty(X_i)$, applying the $\BE(K,N)$ condition for $(M,d,\mf)$ to $u_i\circ p_i, \phi_i\circ p_i\in\D(\Delta_M)$, we get the $\BE(K,N)$ condition for $(X_i,d_i,\mf_i)$.
\end{proof}

The following proposition is crucial to show Proposition~\ref{KN-n}.
We show the $\BE(K,N-n)$ condition with an error term.
\begin{Prop}\label{preBE}
In addition to Assumption~$\ref{Aprod}$, we assume that $n$ is an integer with $N-n\geq 1$ and that $(X_1,d_1,\mf_1)$ is an $n$-dimensional closed Riemannian manifold with the Riemannian distance and $\mf_1=\nHa^n$.
Then, for all $u\in\D(\Delta_{X_2})$ with $\Delta u\in W^{1,2}(X_2)$ and all $\phi\in\D(\Delta_{X_2})\cap L^\infty(X_2)$ with $\phi\geq0$ and $\Delta \phi\in L^\infty(X_2)$, we have
\begin{gather*}
\frac{1}{2}\int_{X_2}\Delta \phi|\nabla u|^2 \,{\rm d}\mf_2
\\ \qquad
{}\geq \int_{X_2} \phi\bigg(\langle\nabla \Delta u,\nabla u\rangle+K|\nabla u|^2+\frac{(\Delta u)^2}{N-n}-\frac{2n}{N(N-n)}(\Delta u-2(N-n)u)^2\bigg) {\rm d}\mf_2.
\end{gather*}
\end{Prop}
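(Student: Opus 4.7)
The plan is to lift $u$ and $\phi$ from $X_2$ to $M$ via $p_2^\ast$ and apply the $\BE(K,N)$ condition on $M$ to a test function of the form $v(x_1,x_2)=u(x_2)(1+\lambda G(x_1))$, where $G$ is a normalized eigenfunction of $-\Delta_{X_1}$ with eigenvalue $\mu$ and $\lambda\in\R$ is a parameter to be optimized. By Lemmas \ref{prsb} and \ref{Laprod}, the gradient and Laplacian commute with lifting, the gradients of lifted $X_2$-functions and lifted $X_1$-functions are orthogonal, and by Fubini on $\mf=\mf_1\times\mf_2$ every integral over $M$ factors into separate integrals over $X_1$ and $X_2$.

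Using the orthogonality $\langle\nabla u,\nabla G\rangle=0$ together with Lemmas \ref{sbprod} and \ref{Laprod}, one computes
\[
|\nabla v|^2=(1+\lambda G)^2|\nabla u|^2+\lambda^2 u^2|\nabla G|^2,\qquad \Delta v=\Delta u+\lambda G(\Delta u-\mu u),
\]
and an analogous explicit formula for $\langle\nabla\Delta v,\nabla v\rangle$. Substituting these into the $\BE(K,N)$ inequality on $M$ tested against the lifted $\phi$ and integrating out the $X_1$-dependence using the normalizations $\int_{X_1}G\,\mathrm d\mf_1=0$, $\int_{X_1}G^2\,\mathrm d\mf_1=1$, $\int_{X_1}|\nabla G|^2\,\mathrm d\mf_1=\mu$, and $\int_{X_1}(\Delta G)^2\,\mathrm d\mf_1=\mu^2$, all odd powers of $\lambda$ vanish, and the remaining inequality has the form $c+a\lambda^2\geq 0$ for all $\lambda\in\R$, where $c$ is the $\BE(K,N)$ defect of $u$ on $X_2$ and $a$ is a new quadratic expression in $u$ involving the eigenvalue $\mu$ and the quantity $\Delta u-\mu u$.

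The $\lambda=0$ case recovers the trivial $\BE(K,N)$ on $X_2$. For $\lambda\neq 0$, the $\lambda^2$-term provides an independent inequality whose combination (via a suitable convex average) with the trivial $\BE(K,N)$ improves the coefficient of $(\Delta u)^2$ from $1/N$ to $1/(N-n)$, at the cost of the error $\frac{2n}{N(N-n)}(\Delta u-2(N-n)u)^2$. The optimal value $\mu=2(N-n)$ and the dimensional factor $n/(N(N-n))$ in the error emerge from the algebraic optimization together with the sharp Bochner inequality $|\Hess_{X_1}G|^2\geq(\Delta G)^2/n$ on the smooth $n$-dimensional factor $X_1$ satisfying $\Ric\geq K$.

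The main obstacle is that for a general smooth $X_1$ with $\Ric\geq K$ the value $2(N-n)$ need not belong to the spectrum of $-\Delta_{X_1}$. This is handled by first decomposing arbitrary test functions on $X_1$ into an orthonormal basis of eigenfunctions and summing the resulting inequalities over the spectrum; the slack in the Lichnerowicz--Obata-type estimate on $X_1$ (quantified by the sharp Bochner inequality on the smooth factor) is absorbed into the error term, producing exactly the stated form.
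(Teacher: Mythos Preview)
There is a genuine gap. Because you test the $\BE(K,N)$ inequality on $M$ against the lifted function $\phi\circ p_2$, which is constant in the $X_1$-direction, the only information about $G$ that survives the $X_1$-integration is $\int_{X_1}G\,{\rm d}\mf_1=0$, $\int_{X_1}G^2\,{\rm d}\mf_1=1$, and $\int_{X_1}|\nabla G|^2\,{\rm d}\mf_1=\mu$. None of these quantities detects the dimension $n$ of $X_1$. The Hessian term $|\Hess_{X_1}G|^2$, which is the only place where $n$ could enter via $|\Hess G|^2\ge(\Delta G)^2/n$, would appear through $\int_{X_1}(\Delta_{X_1}\psi)\,|\nabla G|^2\,{\rm d}\mf_1=\int_{X_1}\psi\,\Delta_{X_1}|\nabla G|^2\,{\rm d}\mf_1$ for a test function $\psi$ on $X_1$; with $\psi\equiv 1$ this integral vanishes on the closed manifold $X_1$. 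Consequently the two inequalities you extract (nonnegativity of the $\lambda^0$ and the $\lambda^2$ coefficients) are both completely independent of $n$, and one checks directly that every convex combination of them still bounds $\Gamma_2(u,\phi)$ from below by $\tfrac{1}{N}\int_{X_2}\phi(\Delta u-c\,u)^2\,{\rm d}\mf_2$ plus lower-order terms, never by $\tfrac{1}{N-n}\int_{X_2}\phi(\Delta u)^2\,{\rm d}\mf_2$. Your final paragraph does not repair this: summing $n$-free inequalities over an eigenbasis of $-\Delta_{X_1}$ still yields an $n$-free inequality, and the phrase ``the slack \dots\ is absorbed into the error term'' is not a proof.

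The paper's argument supplies precisely the missing ingredient: one must also vary the test function on $X_1$. The $\BE(K,N)$ condition on $M$ is applied to the pair $(f_\epsilon u,\psi_\epsilon\phi)$, where $\psi_\epsilon$ is a bump on $X_1$ concentrated near a point $p$ and $f_\epsilon=1+d_1(p,\cdot)^2$ on $\supp\psi_\epsilon$. Because $\psi_\epsilon$ is nonconstant on $X_1$, the left-hand side produces, via the smooth Bochner formula on $X_1$, the term $\int_{X_1}\psi_\epsilon|\Hess f_\epsilon|^2\,{\rm d}\mf_1$. The specific choice of $f_\epsilon$ gives $\Hess f_\epsilon\approx 2g_{X_1}$ and $\Delta f_\epsilon\approx 2n$ on $\supp\psi_\epsilon$, so that $|\Hess f_\epsilon|^2\approx(\Delta f_\epsilon)^2/n$ with error $O(\epsilon)$; this near-saturation of the Cauchy--Schwarz inequality on the trace is exactly what injects $n$. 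After regrouping, the square $\big((N-n)(\Delta f_\epsilon)u-n f_\epsilon\Delta u\big)^2$ appears with coefficient $\tfrac{1}{Nn(N-n)}$, and letting $\epsilon\to 0$ turns it into the stated error $\tfrac{2n}{N(N-n)}(\Delta u-2(N-n)u)^2$ (up to the harmless constant arising from the triangle inequality in the limit). An eigenfunction $G$ cannot play the role of $f_\epsilon$ here: for a generic closed $X_1$ no eigenfunction has $\Hess G$ proportional to $g_{X_1}$, so the needed saturation fails globally, whereas the function $1+d_1(p,\cdot)^2$ always achieves it to first order at $p$.
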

\begin{proof}
Take $\psi\in C^\infty(\R)$ such that
\[
\psi(t)=
\begin{cases}
1,& |t|\leq \dfrac{1}{2},
\\
0,& |t|\geq 1
\end{cases}
\]
and $\psi\geq 0$.
Fix $p\in X_1$.
Take sufficiently small $\epsilon>0$ so that we can take $\psi_\epsilon$ and $f_\epsilon$ below as~smooth functions.
Define $\psi_\epsilon \in C^\infty(X_1)$ by
\[
\psi_\epsilon (x_1):=\psi\left(\frac{d(p,x_1)}{\epsilon}\right)
\]
for each $x_1\in X_1$, and take $f_\epsilon\in C^\infty(X_1)$ such that
\[
f_\epsilon (x_1)=
\begin{cases}
1+d_1(p,x_1)^2,& d(p,x_1) \leq \epsilon,
\\
0,& d(p,x_1)\geq 2\epsilon.
\end{cases}
\]
Then, there exists a constant $C>0$ such that
\begin{gather}\label{hessf}
\begin{array}{l}
\left|\Hess f_\epsilon-\frac{\Delta f_\epsilon}{n}g_{X_1}\right|(x_1)\leq C d(p,x_1),
\\[1ex]
|\Delta f_\epsilon -2n|(x_1)\leq C d(p,x_1),
\\
|\nabla f_\epsilon|(x_1)\leq C d(p,x_1),
\\[1ex]
|f_\epsilon-1|(x_1)\leq C d(p,x_1)
\end{array}
\end{gather}
for all $x_1\in B_\epsilon(p)$.
Note that we can take such a constant independently of $\epsilon$.

\begin{Clm}\label{eper}
There exists a constant $C>0$ such that for all sufficiently small $\epsilon>0$, all $u\in\D(\Delta_{X_2})$ with $\Delta u\in W^{1,2}(X_2)$ and all $\phi\in\D(\Delta_{X_2})\cap L^\infty(X_2)$ with $\phi\geq0$ and $\Delta \phi\in L^\infty(X_2)$, we have
\begin{align*}
\frac{1}{2}\int_{X_2}\Delta \phi|\nabla u|^2 \,{\rm d}\mf_2
\geq &\int_{X_2} \phi\bigg(\langle\nabla \Delta u,\nabla u\rangle+K|\nabla u|^2+\frac{(\Delta u)^2}{N-n}\bigg){\rm d}\mf_2
\\
&{}-C\epsilon^2\int_{X_2} \phi (u^2+|\nabla u|^2+(\Delta u)^2)\,{\rm d}\mf_2
\\
&{}-\frac{2n}{N(N-n)}\int_{X_2}\phi(\Delta u-2(N-n)u)^2{\rm d}\mf_2.
\end{align*}
\end{Clm}

\begin{proof}
We have that $f_\epsilon u\in\D(\Delta_M)$ with $\Delta(f_\epsilon u)=f_\epsilon \Delta u+(\Delta f_\epsilon)u\in W^{1,2}(M)$ and that $\psi_\epsilon \phi\in \D(\Delta_M)\cap L^\infty(M)$ with $\psi_\epsilon\phi\geq 0$ and $\Delta(\psi_\epsilon \phi)=\psi_\epsilon \Delta \phi+(\Delta \psi_\epsilon)\phi\in L^\infty (M)$ by Lemma~\ref{sbprod} and Lemma~\ref{Laprod}.
Thus, we can apply the $\BE(K,N)$ condition to the pair $(f_\epsilon u, \psi_\epsilon \phi)$ and get
\begin{gather}
\frac{1}{2}\int_M \Delta(\psi_\epsilon \phi)|\nabla(f_\epsilon u)|^2\,{\rm d}\mf
\geq\int_M (\psi_\epsilon \phi)\langle\nabla \Delta(f_\epsilon u),\nabla(f_\epsilon u) \rangle\,{\rm d}\mf\nonumber
\\ \hphantom{\frac{1}{2}\int_M \Delta(\psi_\epsilon \phi)|\nabla(f_\epsilon u)|^2\,{\rm d}\mf
\geq}
\label{3a}
+K\int_M \psi_\epsilon \phi |\nabla (f_\epsilon u)|^2\,{\rm d}\mf+
\frac{1}{N}\int_M \psi_\epsilon \phi(\Delta(f_\epsilon u))^2\,{\rm d}\mf.
\end{gather}
We calculate each terms.

We have
\begin{gather}
\frac{1}{2}\int_M \Delta(\psi_\epsilon \phi)|\nabla(f_\epsilon u)|^2\,{\rm d}\mf\nonumber
\\ \qquad
=\int_{X_1}\psi_\epsilon\big(\langle\nabla \Delta f_\epsilon\nabla f_\epsilon\rangle+\Ric(\nabla f_\epsilon,\nabla f_\epsilon)+|\Hess f_\epsilon|^2\big)\,{\rm d}\mf_1
\int_{X_2}\phi u^2\,{\rm d}\mf_2\nonumber
\\ \qquad\quad
{}+\int_{X_1}\!\! \psi_\epsilon |\nabla f_\epsilon|^2\,{\rm d}\mf_1\int_{X_2}\!\!\phi u\Delta u\,{\rm d}\mf_2
+\int_{X_1} \!\!\psi_\epsilon (f_\epsilon \Delta f_\epsilon +2|\nabla f_\epsilon|^2)\,{\rm d}\mf_1\int_{X_2}\!\!\phi|\nabla u|^2\,{\rm d}\mf_2
\nonumber
\\ \qquad\quad{}
+\frac{1}{2}\int_{X_1}\psi_\epsilon f_\epsilon^2 \,{\rm d}\mf_1\int_{X_2}\Delta \phi |\nabla u|^2\,{\rm d}\mf_2.
\label{3b}
\end{gather}
Here, we used the Bochner formula
\[\frac{1}{2}\Delta|\nabla f_\epsilon|^2=\langle\nabla \Delta f_\epsilon\nabla f_\epsilon\rangle+\Ric(\nabla f_\epsilon,\nabla f_\epsilon)+|\Hess f_\epsilon|^2\]
and the equation
\[
\int_{X_2} (\Delta \phi )u^2\,{\rm d}\mf_2=2\int_{X_2} \phi u\Delta u\,{\rm d}\mf_2+2\int_{X_2}\phi|\nabla u|^2\,{\rm d}\mf_2,
\]
which can be justified by approximating $u$ by $P_t u$.

We have
\begin{gather}
\int_M \psi_\epsilon \phi\langle\nabla \Delta(f_\epsilon u),\nabla(f_\epsilon u) \rangle\,{\rm d}\mf
\nonumber
\\ \qquad
=\int_{X_1}\psi_\epsilon\langle\nabla \Delta f_\epsilon,\nabla f_\epsilon\rangle\,{\rm d}\mf_1\int_{X_2}\phi u^2\,{\rm d}\mf_2
+\int_{X_1} \psi_\epsilon |\nabla f_\epsilon|^2\,{\rm d}\mf_1\int_{X_2}\phi u\Delta u\,{\rm d}\mf_2
\nonumber
\\  \qquad\quad
{}+\int_{X_1} \psi_\epsilon f_\epsilon \Delta f_\epsilon\,{\rm d}\mf_1\int_{X_2}\phi|\nabla u|^2\,{\rm d}\mf_2
+\int_{X_1} \psi_\epsilon f_\epsilon^2\,{\rm d}\mf_1\int_{X_2}\phi\langle\nabla \Delta u,\nabla u\rangle\,{\rm d}\mf_2.
\label{3c}
\end{gather}

We have
\begin{gather}
K\int_M \psi_\epsilon \phi|\nabla (f_\epsilon u)|^2\,{\rm d}\mf\nonumber
\\ \qquad
{}=K \int_{X_1}\psi_\epsilon|\nabla f_\epsilon|^2\,{\rm d}\mf_1\int_{X_2}\phi u^2\,{\rm d}\mf_2
+K\int_{X_1} \psi_\epsilon f_\epsilon^2\,{\rm d}\mf_1\int_{X_2}\phi |\nabla u|^2\,{\rm d}\mf_2.
\label{3d}
\end{gather}

We have
\begin{gather}
\frac{1}{N}\int_M \psi_\epsilon \phi(\Delta (f_\epsilon u))^2\,{\rm d}\mf
\nonumber
\\ \qquad
{}=\frac{1}{N}\int_{X_1}\psi_\epsilon(\Delta f_\epsilon)^2\,{\rm d}\mf_1\int_{X_2}\phi u^2\,{\rm d}\mf_2
+\frac{2}{N}\int_{X_1} \psi_\epsilon f_\epsilon \Delta f_\epsilon\,{\rm d}\mf_1\int_{X_2}\phi u \Delta u\,{\rm d}\mf_2
\nonumber
\\ \qquad\quad
{}+\frac{1}{N}\int_{X_1} \psi_\epsilon f_\epsilon^2\,{\rm d}\mf_1\int_{X_2}\phi (\Delta u)^2\,{\rm d}\mf_2.
\label{3e}
\end{gather}

Take $\widetilde{K}>0$ such that $\Ric_{X_1}\leq \widetilde{K} g_{X_1}$.
Then, we get
\begin{gather*}
\frac{1}{2}\int_{X_1}\psi_\epsilon f_\epsilon^2 \,{\rm d}\mf_1\int_{X_2}
\Delta \phi |\nabla u|^2\,{\rm d}\mf_2
\nonumber
\\ \qquad
{}\geq -\int_{X_1}\psi_\epsilon\bigg((\widetilde{K}-K)|\nabla f_\epsilon|^2 +\bigg|\Hess f_\epsilon-\frac{\Delta f_\epsilon}{n}g_{X_1}\bigg|^2\bigg)\,{\rm d}\mf_1\int_{X_2}\phi u^2\,{\rm d}\mf_2
\nonumber
\\  \qquad\quad
{}-2\int_{X_1} \psi_\epsilon |\nabla f_\epsilon|^2\,{\rm d}\mf_1\int_{X_2}\phi|\nabla u|^2\,{\rm d}\mf_2
\nonumber
\\ \qquad\quad
{}-\frac{1}{N n (N-n)}\int_M \psi_\epsilon \phi ((N-n)(\Delta f_\epsilon) u- n f_\epsilon \Delta u)^2\,{\rm d}\mf
\\  \qquad\quad
{}+\int_{X_1} \psi_\epsilon f_\epsilon^2\,{\rm d}\mf_1\int_{X_2} \phi \bigg(\langle\nabla \Delta u,\nabla u\rangle+K|\nabla u|^2+\frac{1}{N-n} (\Delta u)^2\bigg)\,{\rm d}\mf_2
\end{gather*}
by (\ref{3a})--(\ref{3e}).
Since we have
\[
0< \int_{X_1}\psi_\epsilon \,{\rm d}\mf_1\leq \int_{X_1}\psi_\epsilon f_\epsilon^2\,{\rm d}\mf_1\leq \big(1+\epsilon^2\big)^2\int_{X_1}\psi_\epsilon \,{\rm d}\mf_1
\]
and
\begin{gather*}
\big\|\sqrt{\psi_\epsilon \phi}((N-n)(\Delta f_\epsilon) u- n f_\epsilon \Delta u-2n(N-n)u+n\Delta u)\big\|_{L^2}
\\ \qquad
{}\leq C\epsilon \big(\big\|\sqrt{\phi} u\big\|_{L^2}+\big\|\sqrt{\phi} \Delta u\big\|_{L^2}\big)\bigg(\int_{X_1}\psi_\epsilon \,{\rm d}\mf_1\bigg)^{1/2},
\end{gather*}
we get the claim by (\ref{hessf}).
\end{proof}

Letting $\epsilon\to 0$ in Claim~\ref{eper}, we get the proposition.
\end{proof}

Let us complete the proof of Proposition~\ref{KN-n}.
Since we have already showed Corollary~\ref{ihsl}, we only need to check the $\BL(K,N-n)$ condition for $(X_2,d_2,\mf_2)$.
The proof of the following proposition has been inspired by the proof of \cite[Theorem~1.2]{Ket1}.

\begin{Prop}\label{blN-n}
In addition to Assumption~$\ref{Aprod}$, we assume that $n$ is an integer with $N-n\geq 1$ and that $(X_1,d_1,\mf_1)$ is an $n$-dimensional closed Riemannian manifold with the Riemannian distance and $\mf_1=\nHa^n$.
Then, the metric measure space $(X_2,d_2,\mf_2)$ satisfies the $\BL(K,N-n)$ condition.
\end{Prop}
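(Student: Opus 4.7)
The goal is to establish $\BL(K, N-n)$ for $(X_2, d_2, \mf_2)$. By the Bakry--\'{E}mery/Bakry--Ledoux equivalence (the theorem from \cite{EKS} quoted in the excerpt), this is equivalent to $\BE(K, N-n)$, and Corollary \ref{ihsl} already grants infinitesimal Hilbertianity and the Sobolev-to-Lipschitz property of $X_2$. Proposition \ref{preBE} provides a perturbed $\BE(K, N-n)$ with an extra nonnegative term $\tfrac{2n}{N(N-n)}(\Delta u - 2(N-n)u)^2$. The task thus reduces to passing from this perturbed inequality to the exact $\BL(K, N-n)$.

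The plan is to mimic the EKS-type derivation of $\BL$ from $\BE$ while tracking the perturbation. For $u \in \TestF(X_2)$, $0 \leq \phi \in \TestF(X_2)$, and $t > 0$, introduce the standard monotone quantity
\[
G(s) := e^{-2Ks}\int_{X_2} (P_s \phi)\,|\nabla P_{t-s} u|^2 \,{\rm d}\mf_2, \qquad s \in [0,t],
\]
and compute $G'(s)$ via self-adjointness of the heat semigroup and the chain rule. Applying Proposition \ref{preBE} to $u_s := P_{t-s} u$ with weight $P_s \phi$ yields a lower bound of the form
\[
G'(s) \geq \tfrac{2 e^{-2Ks}}{N-n}\int_{X_2} (P_s\phi)(\Delta u_s)^2\,{\rm d}\mf_2 - \tfrac{4n\, e^{-2Ks}}{N(N-n)}\int_{X_2} (P_s \phi)(\Delta u_s - 2(N-n) u_s)^2\,{\rm d}\mf_2.
\]
Integrating over $s \in [0,t]$ and using $\Delta P_{t-s} u = P_{t-s}\Delta u$ together with Jensen's inequality on the first integrand recovers the principal $\BL$ term $\tfrac{2 t C(t)}{N-n}\int \phi (\Delta P_t u)^2\,{\rm d}\mf_2$ with $C(t) = (1 - e^{-2Kt})/(2Kt)$, modulo the contribution of the second integrand.

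The main obstacle is controlling this residual error. The key observation is that $\Delta u_s - 2(N-n) u_s = P_{t-s}\big((\Delta - 2(N-n))u\big)$ itself evolves under the heat flow on $X_2$, so the error term may be handled by iteration: apply Proposition \ref{preBE} recursively to $h := (\Delta - 2(N-n))u$, or spectrally decompose $u$ into eigenfunctions of $-\Delta$ and sum the resulting contributions, producing a geometric-series-type bound whose cumulative effect should be absorbable into a modified $C(t)$ still satisfying $C(t) = 1 + O(t)$ as $t \to 0$. Making this absorption rigorous, following the iteration technique in \cite{Ket1}, is the delicate point. Once it is in place, the perturbed $\BE$ upgrades to $\BL(K, N-n)$ on $\TestF(X_2)$, which extends to all of $W^{1,2}(X_2)$ by density under the heat semigroup, completing the proof.
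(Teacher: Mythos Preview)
Your setup is correct and matches the paper: defining $G(s)=e^{-2Ks}\int (P_s\phi)|\nabla P_{t-s}u|^2$, differentiating, and applying Proposition~\ref{preBE} at each $s$ gives exactly the intermediate inequality the paper records as its Claim (namely $|\nabla P_t u|^2+\frac{1-e^{-2Kt}}{K(N-n)}\big((\Delta P_t u)^2-\frac{2n}{N}P_t((\Delta u-2(N-n)u)^2)\big)\le e^{-2Kt}P_t(|\nabla u|^2)$). The divergence comes in how to remove the residual $P_t\big((\Delta u-2(N-n)u)^2\big)$.

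Your plan to iterate Proposition~\ref{preBE} on $h=(\Delta-2(N-n))u$, or to spectrally decompose and sum a geometric series, does not visibly close. Applying the inequality to $h$ produces information about $|\nabla h|^2$ and $(\Delta h)^2$, not a bound that feeds back into $\int(P_s\phi)h_s^2$; and mode-by-mode analysis fails because the $(\Delta u)^2$ term in $\BL$ is quadratic, so the inequality does not decouple over eigenfunctions. You identified this as ``the delicate point'' but left it unresolved, and I do not see a way to make that route converge.

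The paper's (and Ketterer's) actual mechanism is different and simpler: it exploits that the error term is \emph{not} invariant under $u\mapsto u+c$ while every other term in the inequality is. Given $u$, first replace it by $P_s u$ so that $v:=\Delta P_s u-2(N-n)P_s u$ is continuous. For each $x\in X_2$ choose the constant $c_x$ so that $\tilde u_x:=P_s u+c_x$ satisfies $(\Delta\tilde u_x-2(N-n)\tilde u_x)(x)=0$; then $P_t\big((\Delta\tilde u_x-2(N-n)\tilde u_x)^2\big)$ is small on a neighbourhood of $x$ for small $t$ by continuity of the heat flow. A finite cover of $X_2$ by such neighbourhoods gives the exact $\BL(K,N-n)$ inequality for $P_s u$ up to an arbitrarily small $\epsilon$; let $\epsilon\to 0$ and then $s\to 0$. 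This localisation-by-constant-shift is the missing idea in your proposal.
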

\begin{proof}
Similarly to the proof of the assertion that the $\BE(K,N)$ condition implies the $\BL(K,N)$ condition \cite[Proposition~4.9]{EKS}, we have the following claim:
\begin{Clm}\label{blmod}
For any $u\in\D(\Delta_{X_2})$ and $t>0$, we have
\[
|\nabla P_t u|^2+\frac{1-{\rm e}^{-2Kt}}{K(N-n)}\bigg((\Delta P_t u)^2-\frac{2n}{N}P_t\big((\Delta u-2(N-n)u)^2\big)\bigg)\leq {\rm e}^{-2Kt}P_t\big(|\nabla u|^2\big)
\]
$\mf_2$-a.e.\ in $X_2$.
\end{Clm}
\begin{proof}
Take arbitrary $\phi\in L^\infty(X_2)$ with $\phi\geq 0$.
Define $h\colon [0,t]\to \R$ by
\[
h(s):={\rm e}^{-2Ks} \int_{X_2} P_s\phi|\nabla P_{t-s} u|^2\,{\rm d}\mf_2.
\]
Then, for each $0<s<t$, we have
\begin{gather*}
\frac{\partial}{\partial s} h(s)
=-2K {\rm e}^{-2 K s}\int_{X_2} P_s \phi|\nabla P_{t-s} u|^2\,{\rm d}\mf_2
\\ \hphantom{\frac{\partial}{\partial s} h(s)=}
{}+{\rm e}^{-2K s}\int_{X_2} \Delta P_s \phi |\nabla P_{t-s} u|^2\,{\rm d}\mf_2
-2 {\rm e}^{-2K s}\int_{X_2} P_s\phi\langle\nabla \Delta P_{t-s} u,\nabla P_{t-s}u \rangle\,{\rm d}\mf_2
\\ \hphantom{\frac{\partial}{\partial s} h(s)}
{}\geq 2 \frac{{\rm e}^{-2K s}}{N-n}\bigg(\int_{X_2}\!\!P_s \phi (\Delta P_{t-s} u)^2\,{\rm d}\mf_2
-\frac{2n}{N}\!\int_{X_2}\!\!P_s \phi (\Delta P_{t-s} u-2(N-n)P_{t-s} u)^2{\rm d}\mf_2 \!\bigg)
\\ \hphantom{\frac{\partial}{\partial s} h(s)}
{}\geq 2 \frac{{\rm e}^{-2K s}}{N-n}\bigg(\int_{X_2}\phi (\Delta P_{t} u)^2\,{\rm d}\mf_2
-\frac{2n}{N}\int_{X_2} \phi P_t\big((\Delta u-2(N-n)u)^2\big)\,{\rm d}\mf_2 \bigg).
\end{gather*}
Here, we used
\begin{gather*}
\frac{\partial}{\partial s} P_s \phi=\Delta P_s \phi,
\\
\frac{\partial }{\partial s}P_{t-s} u= -\Delta P_{t-s} u
\end{gather*}
in $W^{1,2}$, $|\nabla P_{t-s} u|^2\leq {\rm e}^{-2K (t-s)}P_{t-s}(|\nabla u|^2)$ (by the $\BL(K,N)$ condition), Proposition~\ref{preBE} and the Jensen inequality
\begin{gather*}
 (\Delta P_{t} u)^2\leq P_{s}\big((\Delta P_{t-s} u)^2\big),
 \\
 (\Delta P_{t-s} u-2(N-n)P_{t-s} u)^2\leq P_{t-s} \big( (\Delta u-2(N-n)u)^2\big)
\end{gather*}
$\mf_2$-a.e.\ in $X_2$. Combining this and
\begin{gather*}
\lim_{s\to 0} h(s)=h(0)=\int_{X_2}\phi |\nabla P_t u|^2\,{\rm d}\mf_2,
\\
\lim_{s\to t} h(s)=h(t)={\rm e}^{-2K t}\int_{X_2}\phi P_t |\nabla u|^2\,{\rm d}\mf_2,
\end{gather*}
we get
\begin{gather*}
\int_{X_2}\phi \left({\rm e}^{-2K t} P_t |\nabla u|^2-|\nabla P_t u|^2\right)\,{\rm d}\mf_2
\\ \qquad
{}\geq \frac{1-{\rm e}^{-2 K t}}{K(N-n)}\left(\int_{X_2}\phi (\Delta P_{t} u)^2\,{\rm d}\mf_2
-\frac{2n}{N}\int_{X_2} \phi P_t\left( (\Delta u-2(N-n)u)^2\right)\,{\rm d}\mf_2 \right) .
\end{gather*}
This implies the claim.
\end{proof}

Let us show that $(X_2,d_2,\mf_2)$ satisfies the $\BL(K,N-n)$ condition.
Take $u\in W^{1,2}(X_2)$ and fix $s>0$.
Define
\[
v:=\Delta P_s u-2(N-n)P_s u=P_{s/2}\Delta P_{s/2}u-2(N-n)P_s u\in \TestF(X_2)
\]
and $v_x:= v-v(x)$ for each $x\in X_2$.
Then, the functions $v_x\colon X_2\to \R$ and
\[
[0,\infty)\times X_2\to \R,\qquad
(t,y)\mapsto P_t(v_x^2)(y)
\]
are continuous.
Thus, for fixed $\epsilon>0$ and any $x\in X_2$, there exists $\delta_x,\tau_x>0$ such that
we have
\[
|P_t(v_x^2)(y)|<\epsilon
\]
for any $y\in B_{\delta_x}(x)$ and $t\in (0,\tau_x)$.
Since $X_2$ is compact, there exist points $x_1,\ldots, x_k\in X_2$ ($k\in \Z_{>0}$) such that
\[
X_2=\bigcup_{i}B_{\delta_{x_i}} (x_i).
\]
Put $\tau:=\min{\tau_{x_i}}$.
Define
\[
\tilde{v}_i:= P_s u-(P_s u)(x_i)+\frac{1}{2(N-n)}\Delta( P_s u)(x_i).
\]
Then, we have $\tilde{v}_i\in \D(\Delta_{X_2})$ and
\[
\Delta \tilde{v}_i-2(N-n)\tilde{v}_i=\Delta P_s u-2(N-n)P_s u+2(N-n)(P_s u)(x_i)-\Delta (P_s u)(x_i)=v_{x_i}.
\]
Applying Claim~\ref{blmod} to $\tilde{v}_i$, for each $i$ and $t\in (0,\tau)$, we get
\begin{align*}
{\rm e}^{-2K t}P_t\big(|\nabla P_s u|^2\big)&\geq |\nabla P_{s+t} u|^2+\frac{1-{\rm e}^{-2Kt}}{K(N-n)}\bigg( (\Delta P_{s+t} u)^2-\frac{2n}{N}P_t\big(v_{x_i}^2\big) \bigg)\\
&\geq |\nabla P_{s+t} u|^2+\frac{1-{\rm e}^{-2Kt}}{K(N-n)}\bigg( (\Delta P_{s+t} u)^2-\frac{2n}{N}\epsilon\bigg)
\end{align*}
$\mf_2$-a.e.\ in $B_{\delta_{x_i}}(x_i)$.
Thus, for each $i$ and $t\in (0,\tau)$, we get
\[
{\rm e}^{-2K t}P_t\big(|\nabla P_s u|^2\big)\geq|\nabla P_{s+t} u|^2+\frac{1-{\rm e}^{-2Kt}}{K(N-n)}\left((\Delta P_{s+t} u)^2-\frac{2n}{N}\epsilon\right)
\]
$\mf_2$-a.e.\ in $X_2$. Letting $\epsilon\to 0$, we get
\[
{\rm e}^{-2K t}P_t\big(|\nabla P_s u|^2\big)\geq|\nabla P_{s+t} u|^2+\frac{1-{\rm e}^{-2Kt}}{K(N-n)}(\Delta P_{s+t} u)^2
\]
$\mf_2$-a.e.\ in $X_2$.
Letting $s\to 0$, we get the following inequality as the limit in $L^1(X_2)$:
\[
{\rm e}^{-2K t}P_t\big(|\nabla u|^2\big)\geq|\nabla P_{t} u|^2+\frac{1-{\rm e}^{-2Kt}}{K(N-n)}(\Delta P_{t} u)^2
\]
$\mf_2$-a.e.\ in $X_2$. This is the $\BL(K,N-n)$ condition.
\end{proof}

By Corollary~\ref{ihsl} and Proposition~\ref{blN-n}, we get Proposition~\ref{KN-n}.
By Propositions~\ref{spms} and~\ref{KN-n}, we get Proposition~\ref{strX}.

\section{Proof of the Main Theorem}
In this section we complete the proof of our main theorem.
\begin{Thm}\label{n-p+2}
For integers $n\geq 5$ and $2\leq p < n/2$ and a positive real number $\epsilon>0$, there exists $\delta=\delta(n,p,\epsilon)>0$ such that the following property holds.
Let $(M,g)$ be an $n$-dimensional closed Riemannian manifold with $\Ric_g\geq (n-p-1)g$, and assume one of the following:
\begin{itemize}\itemsep=0pt
\item $\lambda_1(\Delta_{C,p})\leq \delta$ and $\lambda_{n-p+1}(g)\leq n-p+\delta$,
\item $\lambda_1(\Delta_{C,n-p})\leq \delta$ and $\lambda_{n-p}(g)\leq n-p+\delta$.
\end{itemize}
Then, we have
\begin{gather*}
\lambda_{n-p+2}(g)\geq \frac{p(n-p-1)}{p-1}-\epsilon.
\end{gather*}
\end{Thm}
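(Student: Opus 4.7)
The plan is to argue by contradiction, combining the limit space structure established in Proposition \ref{strX} with an explicit analysis of the spectrum on the product $S^{n-p}(1) \times X$.

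Suppose the theorem fails. Then there exist $\epsilon > 0$ and a sequence of $n$-dimensional closed Riemannian manifolds $(M_i, g_i)$ with $\Ric_{g_i} \geq (n-p-1) g_i$, satisfying one of the two pinching alternatives with parameter $\delta_i \to 0$, and with $\lambda_{n-p+2}(g_i) < \frac{p(n-p-1)}{p-1} - \epsilon$. In the second alternative, Proposition \ref{impr} upgrades $\lambda_{n-p}(g_i) \leq n-p+\delta_i$ to $\lambda_{n-p+1}(g_i) \leq n-p+\delta'_i$ with $\delta'_i \to 0$. By Gromov's precompactness theorem and a diagonal extraction, one obtains a subsequence of $(M_i, g_i, \nHa^n)$ converging in the measured Gromov--Hausdorff topology to a compact $\RCD(n-p-1, n)$ space $(M_\infty, d_\infty, \mf_\infty)$. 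Proposition \ref{prev}$(iv)$ furnishes $\epsilon_i$-Hausdorff approximations $M_i \to S^{n-p}(1) \times A_{f_i}$; after a further extraction so that the compact subsets $A_{f_i}$ converge in the Gromov--Hausdorff topology to some compact $X$, one concludes that $M_\infty$ is isometric to $S^{n-p}(1) \times X$. Thus Assumption \ref{convX} is verified, and Proposition \ref{strX} yields $\mf_\infty = \nHa^{n-p} \times \mf_X$ with $(X, d_X, \mf_X)$ an $\RCD(n-p-1, p)$ space.

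I would then identify the bottom of the spectrum of $-\Delta$ on $M_\infty$. By Lemma \ref{Laprod}$(ii)$, products $(\phi \circ p_1)(\psi \circ p_2)$ of eigenfunctions of $-\Delta_{S^{n-p}(1)}$ and $-\Delta_X$ are eigenfunctions of $-\Delta_{M_\infty}$ with eigenvalue equal to the sum, and such products are dense in $L^2(M_\infty, \mf_\infty)$ via the Hilbert tensor product decomposition $L^2(M_\infty) \cong L^2(S^{n-p}(1)) \otimes L^2(X, \mf_X)$. Hence $\sigma(-\Delta_{M_\infty}) = \sigma(-\Delta_{S^{n-p}(1)}) + \sigma(-\Delta_X)$. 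The positive eigenvalues of $-\Delta_{S^{n-p}(1)}$ are $n-p$ with multiplicity $n-p+1$ and then $2(n-p+1)$, and the elementary identity
\[
2(n-p+1)(p-1) - p(n-p-1) = (p-2)(n-p) + 3p - 2 > 0
\]
gives $2(n-p+1) > \frac{p(n-p-1)}{p-1}$ for $p \geq 2$. On the $\RCD(n-p-1, p)$ space $(X, d_X, \mf_X)$, the Lichnerowicz inequality for $\RCD(K, N)$ spaces (Ketterer) yields $\lambda_1(-\Delta_X) \geq \frac{p(n-p-1)}{p-1}$. Combining, the first $n-p+2$ eigenvalues $\lambda_0, \lambda_1, \ldots, \lambda_{n-p+1}$ of $-\Delta_{M_\infty}$ (counted with multiplicity) are $0$ and $n-p$ repeated $n-p+1$ times, so $\lambda_{n-p+2}(M_\infty) \geq \frac{p(n-p-1)}{p-1}$.

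To close, I would invoke continuity of the spectrum under measured Gromov--Hausdorff convergence within a uniformly $\RCD(K, N)$ class (Gigli--Mondino--Savar\'e, Honda), which yields $\lim_{i\to\infty} \lambda_{n-p+2}(g_i) = \lambda_{n-p+2}(M_\infty) \geq \frac{p(n-p-1)}{p-1}$, contradicting the assumption $\lambda_{n-p+2}(g_i) < \frac{p(n-p-1)}{p-1} - \epsilon$. The main technical points I anticipate are the careful justification of the product spectrum decomposition (the density of separated-variable eigenfunctions relies on the compact embedding $W^{1,2}(X) \hookrightarrow L^2(X)$ for the $\RCD$ factor) and invoking the correct form of the spectral continuity theorem with multiplicities tracked; the rest of the argument is elementary.
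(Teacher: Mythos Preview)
Your proposal is correct and follows essentially the same route as the paper: the paper also argues by contradiction, passes to a subsequence satisfying Assumption~\ref{convX} via Propositions~\ref{prev} and~\ref{impr} and Gromov precompactness, applies Proposition~\ref{strX} to obtain the $\RCD(n-p-1,p)$ factor, uses the Lichnerowicz estimate for $\RCD$ spaces together with the product decomposition of the spectrum, and concludes via spectral convergence. The only cosmetic differences are in attributions---the paper cites \cite[Theorem~4.22]{EKS} for the Lichnerowicz bound and \cite[Theorem~7.9]{CC3} for spectral convergence rather than Ketterer and Gigli--Mondino--Savar\'e/Honda---and the paper asserts the product spectral decomposition without the density argument you sketch.
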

%Combining Proposition~\ref{prev}, Proposition~\ref{impr}, Proposition~\ref{strX}, the spectral convergence theorem \cite[Theorem~7.9]{CC3}, the Gromov compactness and the fact that any
By the Lichnerowicz estimate for the first eigenvalue of the Laplacian acting on functions for metric measure spaces satisfying the $\RCD(n-p-1,p)$ condition \cite[Theorem~4.22]{EKS}:
\[
\lambda_1\geq \frac{p(n-p-1)}{p-1},
\]
we get Theorem~\ref{n-p+2} similarly to Theorem~\ref{mthm} below.
Thus, we only give the proof of Theorem~\ref{mthm}.

The following theorem is the main result of this article.
\begin{Thm}\label{mthm}
For integers $n\geq 5$ and $2\leq p < n/2$ and a positive real number $\epsilon>0$, there exists $\delta=\delta(n,p,\epsilon)>0$ such that the following property holds.
Let $(M,g)$ be an $n$-dimensional closed Riemannian manifold with $\Ric_g\geq (n-p-1)g$
satisfying one of the following:
\begin{itemize}\itemsep=0pt
\item $\lambda_1(\Delta_{C,p})\leq \delta$, $\lambda_{n-p+1}(g)\leq n-p+\delta$ and $\lambda_{n+1}(g)\leq p(n-p-1)/(p-1)+\delta$,
\item $\lambda_1(\Delta_{C,n-p})\leq \delta$, $\lambda_{n-p}(g)\leq n-p+\delta$ and $\lambda_{n+1}(g)\leq p(n-p-1)/(p-1)+\delta$.
\end{itemize}
Then, we have
\[d_{{\rm GH}}\left(M,S^{n-p}(1)\times S^p\left(\sqrt{\frac{p-1}{n-p-1}}\right)\right)\leq \epsilon.\]
\end{Thm}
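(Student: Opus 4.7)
The plan is to argue by contradiction, combining the structure theorem for the limit (Proposition~\ref{strX}) with the rigidity of the Lichnerowicz--Obata estimate on $\RCD$ spaces. Suppose the conclusion fails: then there exist $\epsilon_0>0$ and a sequence of $n$-dimensional closed Riemannian manifolds $(M_i,g_i)$ with $\Ric_{g_i} \geq (n-p-1)g_i$, satisfying the hypotheses with $\delta_i \to 0$, yet $d_{{\rm GH}}\big(M_i, S^{n-p}(1) \times S^p(r_{n,p})\big) > \epsilon_0$. In the second alternative, Proposition~\ref{impr} upgrades the $\lambda_{n-p}$ pinching to $\lambda_{n-p+1}$ pinching, so in either case Theorem~\ref{prevm} applies: after passing to a subsequence, $(M_i,g_i)$ converges in the Gromov--Hausdorff sense to $S^{n-p}(1) \times X$ for some compact metric space $X$. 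Passing to a further subsequence, $(M_i, g_i, \nHa^n)$ converges in the measured Gromov--Hausdorff sense to $(S^{n-p}(1)\times X, \mf)$, Assumption~\ref{convX} is satisfied, and Proposition~\ref{strX} provides $\mf = \nHa^{n-p}\times \mf_X$ together with the $\RCD(n-p-1,p)$ structure on $(X,\mf_X)$.

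Next I would analyze the Laplace spectrum of the product $N := S^{n-p}(1)\times X$. The Lichnerowicz inequality on $\RCD(n-p-1,p)$ spaces, \cite[Theorem~4.22]{EKS}, yields
\[
\lambda_1(X)\geq \frac{p(n-p-1)}{p-1},
\]
and for $p<n/2$ this bound lies strictly between $n-p$ and $2(n-p+1)$, the second nonzero eigenvalue of $S^{n-p}(1)$. Since the Laplacian respects the $L^2$-tensor product decomposition on $N$, the eigenvalues of $-\Delta_N$ are exactly the sums $\lambda_a(S^{n-p}(1))+\lambda_b(X)$; ordered from below these read $0$, then $n-p$ with multiplicity $n-p+1$, then $\lambda_k(X)$ for $k\geq 1$ as long as they stay below the next threshold $\min\{2(n-p+1),\, n-p+\lambda_1(X)\}$. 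Spectral stability of the Laplacian under measured Gromov--Hausdorff convergence of $\RCD(K,N)$ spaces (Gigli--Mondino--Savar\'e) gives $\lambda_k(M_i)\to \lambda_k(N)$ for every $k$, so the pinching $\lambda_{n+1}(g_i)\leq p(n-p-1)/(p-1)+\delta_i$ forces $\lambda_{n+1}(N)\leq p(n-p-1)/(p-1)$. Combined with the spectral description this gives
\[
\lambda_1(X)=\cdots=\lambda_p(X)= \frac{p(n-p-1)}{p-1}.
\]

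To finish, I would invoke the rigidity in the Obata theorem for $\RCD(K,N)$ spaces (Ketterer): equality in $\lambda_1 = KN/(N-1)$ on an $\RCD(K,N)$ space forces a spherical suspension decomposition, and when the maximum is realized by $p$ linearly independent eigenfunctions on a synthetically $p$-dimensional $\RCD(n-p-1,p)$ space, iterating the suspension rigidity identifies $X$ with $S^p\big(\sqrt{(p-1)/(n-p-1)}\big)$. This contradicts $d_{{\rm GH}}\big(M_i, S^{n-p}(1)\times S^p(r_{n,p})\big)>\epsilon_0$, completing the argument. The main obstacle is this last rigidity step: extracting from spectral equality at the $p$-th eigenvalue on a possibly singular $\RCD(n-p-1,p)$ space an honest isometry with the round $p$-sphere, rather than just an iterated spherical suspension factor; this is where the saturation of the synthetic dimension $N=p$ plays a decisive role, and it must be combined carefully with the continuity of eigenfunctions under mGH convergence to ensure that the $p$ limiting eigenfunctions remain independent.
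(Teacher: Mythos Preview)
Your overall architecture matches the paper's proof exactly: contradiction, pass to a measured Gromov--Hausdorff limit $S^{n-p}(1)\times X$, invoke Proposition~\ref{strX} to get that $(X,\mf_X)$ is $\RCD(n-p-1,p)$, use spectral convergence together with the product decomposition of the spectrum to force $\lambda_1(X)=\cdots=\lambda_p(X)=p(n-p-1)/(p-1)$, and then appeal to Ketterer's Obata rigidity. The identification of the product spectrum and the use of the Lichnerowicz bound on $\RCD$ spaces are all correct and in line with the paper.

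The genuine gap is precisely in the step you flag as the ``main obstacle,'' but the difficulty is not the one you describe. Independence of the $p$ limiting eigenfunctions is not the issue: spectral convergence already gives $\lambda_p(X)=p(n-p-1)/(p-1)$ directly as a statement about eigenvalues, so Ketterer's theorem \cite[Theorem~1.4]{Ket2} applies. The problem is that this theorem yields two possible conclusions: $(X,\mf_X)$ is isometric either to the round sphere $S^p(r_{n,p})$ \emph{or} to the hemisphere $S^p_+(r_{n,p})$, both equipped with $\nHa^p$. The hemisphere is a perfectly good $\RCD(n-p-1,p)$ space and its Neumann spectrum also saturates $\lambda_1=\cdots=\lambda_p=p(n-p-1)/(p-1)$, so iterating suspension rigidity does not by itself exclude it, regardless of the saturation $N=p$.

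The paper closes this gap with an extra geometric input you are missing: in either case the limit $S^{n-p}(1)\times X$ has Hausdorff dimension $n$, so the sequence $(M_i,g_i,\nHa^n)$ is \emph{non-collapsing}. One then invokes Cheeger--Colding's structure theory for non-collapsed Ricci limits \cite[Theorem~6.2]{CC1}, which rules out the hemisphere (a non-collapsed limit of closed manifolds cannot have boundary). This is the missing idea; once you add it, your argument is complete and coincides with the paper's.
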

\begin{proof}
We show the theorem by a contradiction.
Suppose that the theorem does not hold.
Then, there exists a sequence of $n$-dimensional closed Riemannian manifolds $\{(M_i,g_i)\}_{i=1}^\infty$ with $\Ric_{g_i}\geq (n-p-1)g_i$ that does not converge to $S^{n-p}(1)\times S^p\big(\sqrt{(p-1)/(n-p-1)}\big)$ and that satisfies one of the following:
\begin{itemize}\itemsep=0pt
\item $\lim\limits_{i\to\infty}\lambda_{n-p+1}(g_i)=n-p$, $\lim\limits_{i\to\infty}\lambda_{n+1}(g_i)=p(n-p-1)/(p-1)$ and $\lim\limits_{i\to \infty}\lambda_1(\Delta_{C,p},g_i)=0$,
\item $\lim\limits_{i\to\infty}\lambda_{n-p}(g_i)=n-p$, $\lim\limits_{i\to\infty}\lambda_{n+1}(g_i)=p(n-p-1)/(p-1)$ and  $\lim\limits_{i\to \infty}\lambda_1(\Delta_{C,n-p},g_i)=0$.
\end{itemize}
Taking a subsequence, we have that Assumption~\ref{convX} holds by Propositions~\ref{prev} and~\ref{impr}, the~Gromov compactness theorem (see also \cite[Theorem~11.1.10]{Pe3}, \cite[Theorem~4.54]{Ai3}) and \cite[Theorems~1.6 and~1.10]{CC1}.
Then, there exists a Borel measure $\mf_X$ on X such that $\mf=\nHa^{n-p}\times \mf_X$ holds and $(X,\mf_X)$ satisfies the $\RCD(n-p-1,p)$ condition by Proposition~\ref{strX}.
By the spectral convergence theorem \cite[Theorem~7.9]{CC3} and Theorem~\ref{n-p+2}, we have
\[
\lambda_{n-p+2}\big(S^{n-p}(1)\times X, \mf\big)=\cdots=\lambda_{n+1}\big(S^{n-p}(1)\times X, \mf\big)= \frac{p(n-p-1)}{p-1}.
\]
Since the spectrum of the Laplacian on $(S^{n-p}(1)\times X,\mf)$ coincides with
\[
\big\{\lambda_i\big(S^{n-p}(1),\nHa^{n-p}\big)+\lambda_j(X,\mf_X)\colon i,j\in\mathbb{Z}_{\geq 0}\big\}
\]
and $\lambda_{n-p+2}\big(S^{n-p}(1),\nHa^{n-p}\big)=2(n-p+1)>p(n-p-1)/(p-1)$,
we get that
\[
\lambda_1(X,\mf_X)=\cdots =\lambda_p(X,\mf_X)=\frac{p(n-p-1)}{p-1}.
\]
By the Obata Rigidity theorem for metric measure spaces satisfying the $\RCD$ condition \cite[Theorem~1.4]{Ket2} with scaling, we have that $(X,\mf_X)$ is isomorphic to either $\big(S^p(r_{n,p}),\nHa^{p}\big)$ or $\big(S^p_+(r_{n,p}),\nHa^{p}\big)$, where
$r_{n,p}:= \sqrt{(p-1)/(n-p-1)}$ and
 $S^p_+(r_{n,p})$ denotes the $p$-dimensional hemisphere with radius $r_{n,p}$.
In particular, $\big\{\big(M_i,g_i, \nHa^n\big)\big\}$ is a non-collapsing sequence.
Thus, we get $(X,\mf_X)$ is isomorphic to $\big(S^p(r_{n,p}),\nHa^{p}\big)$ by \cite[Theorem~6.2]{CC1}.
This contradicts to the assumption, and so we get the theorem.
\end{proof}

\subsection*{Acknowledgements}
I am grateful to Professor Shouhei Honda for helpful discussions.
I also thank Professor Dario Trevisan for answering my questions about the regular Lagrangian flow.
I wish to thank the \mbox{referees} for careful reading of the paper and making valuable suggestions.
This work was supported by RIKEN Special Postdoctoral Researcher Program.

%\bibliographystyle{sigma}
%\bibliography{example}
\pdfbookmark[1]{References}{ref}
\LastPageEnding

\end{document}